\newtheorem{thm}{Theorem}[section]
\newtheorem{cor}[thm]{Corollary}
\newtheorem{lem}[thm]{Lemma}
\newtheorem{prop}[thm]{Proposition}
\newtheorem*{thm A}{Theorem A}
\newtheorem*{thm B}{Theorem B}
\theoremstyle{definition}
\theoremstyle{remark}
\newtheorem{rem}[thm]{Remark}
\newcounter{numl}
\newcommand{\labelnuml}{\textup{(\roman{numl})}}
\newenvironment{numlist}{\begin{list}{\labelnuml}%
{\usecounter{numl}\setlength{\leftmargin}{0pt}%
\setlength{\itemindent}{2\parindent}%
\setlength{\itemsep}{\smallskipamount}\def
\makelabel ##1{\hss \llap {\upshape ##1}}}}{\end{list}}
\def\cal{\mathcal}
\def\bb{\mathbb} 
\def\a{\alpha } 
\def\b{\beta }
\def\O{\Omega }
\def\dim{\rm dim\, }
\def\ot{\otimes }
\def\part{\partial }
\def\bpart{\bar\partial }
\def\w{\wedge }
\begin{document}


\title[Lee classes of complex surfaces]{On the Lee classes of locally conformally symplectic complex surfaces}

\author[V. Apostolov]{Vestislav Apostolov} \address{Vestislav Apostolov \\
D{\'e}partement de Math{\'e}matiques\\ UQAM\\ C.P. 8888 \\ Succ. Centre-ville
\\ Montr{\'e}al (Qu{\'e}bec) \\ H3C 3P8 \\ Canada}
\email{apostolov.vestislav@uqam.ca}

\author[G. Dloussky]{Georges Dloussky}\address{Georges Dloussky, Aix-Marseille University, CNRS, Centrale Marseille, I2M, Marseille, France}
\email{georges.dloussky@univ-amu.fr}

\thanks{V.A. was supported in part by an NSERC discovery grant and is grateful to the support of Universit\'e Aix-Marseille  and the Institute of Mathematics and Informatics of the Bulgarian Academy of Sciences where a part of this project was realized. G.D. is grateful to UQAM for the hospitality during the preparation of the work and to CIRGET and UMI  3457 du CNRS for the financial support.   The authors  would also like to thank Steven Boyer, Baptiste Chantraine, Pierre Derbez  for insightful discussions. They are especially grateful to Matei Toma for pointing out  to them the works \cite{CT,brunella-inoue}, and to Max Pontecorvo for clarifying the material in Section~\ref{s:enoki}.}

\date{\today}

\begin{abstract} 
We prove that the deRham cohomology classes of Lee forms of locally conformally symplectic structures  taming the complex structure of a compact complex surface $S$ with first Betti number equal to $1$ is either a non-empty open subset of $H^1_{dR}(S, \bb R)$, or  a single point. In the latter case, we show that $S$ must be biholomorphic to a blow-up of an Inoue--Bombieri surface. Similarly, the deRham cohomology classes of  Lee forms of locally conformally K\"ahler structures of a compact complex surface $S$ with first Betti number equal to $1$ is either a non-empty open subset of $H^1_{dR}(S, \bb R)$, a single point or the empty set. We give a characterization of Enoki surfaces in terms of the existence of a special foliation, and obtain a vanishing result for the Lichnerowicz--Novikov  cohomology groups on the class ${\rm VII}$ compact complex surfaces with infinite cyclic fundamental group.
\end{abstract}

\maketitle


\section{Introduction}

This paper is a sequel to our previous work~\cite{lcs}  in which we have established the following result
\begin{thm}\cite{lcs} \label{thm:lcs}  Any compact complex surface $S=(M,J)$ admits a non-degenerate $2$-form $\omega$ which tames the complex structure $J$, i.e. its $(1,1)$-part $\omega^{1,1}$  with respect to $J$ is positive-definite, and satisfies
\begin{equation}\label{lcs}
d\omega= \a \wedge \omega
\end{equation}
for some closed $1$-form $\a$.
\end{thm}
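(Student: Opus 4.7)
The plan is to proceed along the Enriques--Kodaira classification of compact complex surfaces, exploiting throughout the crucial flexibility of the lcs condition over the lcK one: we do not require $\omega$ to be of type $(1,1)$, only that its $(1,1)$-part be positive-definite. The overall strategy is to leverage known lcK constructions where they exist, and to use the extra $(2,0)+(0,2)$ freedom to fill in the remaining cases.

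The K\"ahler case is immediate: by the Buchdahl--Lamari theorem a compact complex surface is K\"ahler iff $b_1(S)$ is even, and in that case any K\"ahler form is a non-degenerate taming $(1,1)$-form with $d\omega=0$, so one takes $\a=0$.

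For $b_1(S)$ odd I would proceed class by class. Non-K\"ahler properly elliptic surfaces, primary and secondary Kodaira surfaces, Hopf surfaces, Inoue--Bombieri surfaces, and more generally Kato surfaces (class ${\rm VII}$ surfaces containing a global spherical shell) are all known to admit lcK metrics by the works of Vaisman, Tricerri, Belgun, Gauduchon--Ornea and Brunella; their fundamental forms are already taming lcs forms with closed Lee form. Non-minimal surfaces reduce to minimal ones by observing that blowing up preserves the existence of a taming lcs structure via a local K\"ahler patching in a small bidisc, since the lcs defect is cohomological and locally trivial.

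The hard part is to cover every minimal class ${\rm VII}$ surface with $b_2>0$ without appealing to the global spherical shell conjecture. For such a surface I would start from a Gauduchon Hermitian form $\omega_g$ whose Lee form $\t_g$ satisfies $d\t_g\w\omega_g=0$, and then seek a real $(2,0)+(0,2)$-correction $\b+\bar\b$ such that $\omega:=\omega_g+\b+\bar\b$ satisfies $d\omega=\a\w\omega$ for some closed $\a$ cohomologous to $\t_g$. Splitting this equation into $(2,1)+(1,2)$ and $(3,0)+(0,3)$ components reduces it to a linear $\bar\partial$-type equation for $\b$, whose solvability rests on the Hodge theory of the non-K\"ahler surface together with the vanishing $p_g(S)=0$ on class ${\rm VII}$. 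Non-degeneracy and $J$-taming of $\omega$ are open conditions preserved under small corrections, so the main obstacle, and the step I expect to demand the most delicate analysis, is simultaneously matching the $(0,2)$- and $(2,0)$-parts of $d\omega-\a\w\omega$ while keeping the cohomology class of $\a$ under control.
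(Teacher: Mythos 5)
Your overall strategy---reduce to minimal models, quote known lcK metrics where available, and for the remaining class ${\rm VII}$ surfaces perturb a Gauduchon form by a $(2,0)+(0,2)$ term---is in the same spirit as the proof in \cite{lcs}, but the step you describe as ``a linear $\bar\partial$-type equation for $\beta$, solvable by Hodge theory together with $p_g(S)=0$'' is exactly where the whole difficulty sits, and as you set it up it fails. Write $\omega=F+\beta+\bar\beta$ with $F=\omega_g$ and $\beta$ of type $(2,0)$, and $d_\alpha=d-\alpha\wedge\cdot$. On a surface there are no $(3,0)$ or $(0,3)$ components at all (so there are no ``$(2,0)$- and $(0,2)$-parts of $d\omega-\alpha\wedge\omega$'' to match), and $d_\alpha\omega=0$ is equivalent to the single equation $\bar\partial_\alpha\beta=-\partial_\alpha F$ together with its conjugate. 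Applying $\bar\partial_\alpha$ and using $d\alpha=0$ yields the necessary integrability condition $\partial_\alpha\bar\partial_\alpha F=0$, a scalar $(2,2)$-equation coupling $\alpha$ and $F$; for the Gauduchon form, which satisfies only the untwisted condition $\partial\bar\partial F=0$, and an arbitrary closed $\alpha$ this has no reason to hold, and no choice of $\beta$ can repair it. Up to conventions this compatibility condition is the equation $\delta^g(\theta^g-\alpha)+g(\theta^g-\alpha,\alpha)=0$ quoted in \eqref{l:lcs} from Lemmas~2.4--2.5 of \cite{lcs}: in the cited proof it is arranged by adjusting the Hermitian metric (a conformal-factor problem governed by the principal eigenvalue of a non-self-adjoint second-order elliptic operator, cf.\ Appendix~A of \cite{lcs}) together with a correct choice of the Lee class, and the residual finite-dimensional obstruction to solving $\bar\partial_\alpha\beta=-\partial_\alpha F$ lies in $H^{2,1}_{\bar\partial_\alpha}(S,\bb C)\cong H^1(S,K_S\otimes\cal L_\alpha^*)$, dual to $H^1(S,\cal L_\alpha)$, which is handled via Lemma~\ref{l:vanishing} and Riemann--Roch rather than via $p_g(S)=0$ alone. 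Note also that ``$\alpha$ closed and cohomologous to $\theta_g$'' is not meaningful, since the Lee form of a Gauduchon metric is not closed; which class $\alpha$ must represent is part of the problem.

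A second, independent gap is in your class-by-class list: not every Inoue--Bombieri surface admits an lcK metric. By Belgun \cite{B}, the Inoue surfaces of type $S^{+}$ with non-real parameter carry no lcK structure at all (this is precisely why $\mathcal C(S)$ can be empty in Theorem~\ref{thm:B}), and since these surfaces have $b_2=0$ they are also excluded from the ``hard case'' you formulate for minimal class ${\rm VII}$ surfaces with $b_2>0$. Hence they are covered neither by quoting known lcK metrics nor by your perturbation argument, and they too require the genuinely non-$(1,1)$ construction discussed above. The K\"ahler case, the reduction of blow-ups to minimal models, and the use of known lcK metrics on Hopf, Kodaira, properly elliptic and Kato surfaces are fine.
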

A non-degenerate $2$-forms $\omega$ satisfying \eqref{lcs}  is called a {\it locally conformally symplectic} (LCS) structure and the corresponding closed $1$-form $\a$ is referred to as the {\it Lee form} of $\omega$.  The notion of an LCS structure is conformally invariant in the sense that if $\omega$ is an LCS structure with Lee form $\a$, then $\tilde \omega=e^f \omega$ is an LCS structure with Lee form $\tilde \a = \a + df$. Thus, the deRham class $[\a]\in H^1_{dR}(S, \bb R)$ is a natural invariant of (the conformal class of) an LCS structure,  which we shall call the {\it Lee class} of $\omega$. We thus  consider the set of all Lee classes of LCS structures taming the complex structure of $S$
$$\mathcal{T}(S): =\{[\a] \in H^1_{dR}(S, \bb R) : \exists \ \omega \in \mathcal{E}^2(S, \bb R) \ {\rm s. \ t.} \ \omega^{1,1}>0, \ d\omega= \a \wedge \omega\}.$$
Theorem~\ref{thm:lcs} then states that $\mathcal{T}(S)\neq \emptyset$. This result is new only when the first Betti number $b_1(S)$ is odd, in which case $0 \notin \mathcal{T}(S)$ (see e.g. \cite[Prop.~3.5]{lcs}). Conversely, when $b_1(S)$ is even, it follows from \cite[Lemme~II.3]{gauduchon} and  \cite[p.~185]{HL} that $\mathcal{T}(S)=\{0\}$.

\smallskip
A further motivation for studying LCS structures comes from the following recent result by Eliashberg--Murphy~\cite{EM} (see also \cite[Thm. 2.15 \& Rem. 2.16]{CM}):
\begin{thm}\label{thm:EM}\cite{EM} Let $(M,J)$ be a compact almost complex manifold and $a\neq 0 \in H^{1}_{dR}(M, \bb R)$ a non-trivial deRham class. Then  for any $C> 1$ sufficiently large  and  any closed $1$-form $\a \in C a$,  there exists an LCS structure with Lee form $\a$,  compatible with the orientation on $M$ induced by $J$.
\end{thm}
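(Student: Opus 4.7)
The plan is to reformulate equation~\eqref{lcs} and invoke the h-principle of symplectic topology. Writing $d_\a := d - \a\wedge$, which satisfies $d_\a^2 = 0$ since $\a$ is closed, the LCS equation becomes $d_\a\omega = 0$, a first-order linear condition on $\omega$. The formal (topological) datum needed is a non-degenerate 2-form on $M$ compatible with the $J$-orientation. Since $(M,J)$ is almost complex, such forms exist in abundance, e.g.\ $\omega_0 = g(J\cdot,\cdot)$ for any $J$-Hermitian metric $g$. The task is thus to homotope $\omega_0$ through non-degenerate orientation-compatible 2-forms to an honest $d_\a$-closed form.

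The strategy I would follow passes to the infinite cyclic cover $\pi\colon \tilde M \to M$ dual to $a$. On $\tilde M$ the pullback $\pi^*\a = C\,d\tilde f$ is exact, with a primitive $\tilde f$ satisfying $\sigma^*\tilde f = \tilde f + T$ where $\sigma$ is the deck generator and $T>0$ is the period. The gauge substitution $\tilde \Omega := e^{-C\tilde f}\pi^*\omega$ puts LCS structures on $M$ with Lee form in $C\cdot a$ in bijective correspondence with genuine symplectic forms $\tilde\Omega$ on $\tilde M$ that are \emph{equivariant with scaling}, i.e.\ $\sigma^*\tilde\Omega = e^{-CT}\tilde\Omega$. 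Constructing such a $\tilde\Omega$ reduces to producing, over one fundamental domain, a symplectic cobordism between two copies of a fixed contact-type hypersurface whose Liouville volumes differ by the factor $e^{-CT}$. For $C$ large this factor is tiny, opening enough room to apply the h-principle of Borman--Eliashberg--Murphy for overtwisted contact structures together with its almost-symplectic cobordism version. Quotienting by the $\bb Z$ action then delivers the required LCS structure on $M$.

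The main obstacle is to secure the taming condition $\omega^{1,1}>0$ at the end of the construction. Taming is an open condition, so it is preserved under sufficiently $C^0$-small deformations; hence it suffices to run the h-principle starting from the $J$-compatible formal datum $\omega_0$ and to keep the deformation $C^0$-close to it. This is compatible with the flexible character of h-principle constructions, but requires care: the passage to $\tilde M$ and back, together with the insertion of overtwisted disks, must be performed inside a small $C^0$-neighborhood of $\omega_0$. A secondary point is the twofold role of the constant $C$: its largeness is needed both to arrange an overtwisted contact slice on $\tilde M$ (the shrinking factor $e^{-CT}$ has to be small enough to fit an overtwisted disk inside a fundamental domain) and to absorb the choice of specific representative $\a \in C\cdot a$, so the threshold on $C$ necessarily depends on the chosen closed 1-form and on the background metric.
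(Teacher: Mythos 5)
The paper does not prove this statement at all: it is quoted verbatim from Eliashberg--Murphy \cite{EM} (see also \cite{CM}), so there is no internal proof to compare yours against; what you have written is, in outline, a reconstruction of the strategy of \cite{EM} itself. Your reduction is the standard one and is fine as far as it goes: passing to the infinite cyclic cover dual to $a$, the gauge change $\tilde\Omega=e^{-C\tilde f}\pi^*\omega$ identifies LCS forms with Lee form in $C\,a$ with symplectic forms on $\tilde M$ satisfying $\sigma^*\tilde\Omega=e^{-CT}\tilde\Omega$, and the largeness of $C$ is indeed what creates room in the fundamental-domain cobordism. But at the crucial point --- producing a symplectic structure on that cobordism with overtwisted contact-type negative end in the correct formal class, matching the two ends up to the conformal factor, and gluing --- you simply invoke ``the h-principle of Borman--Eliashberg--Murphy together with its almost-symplectic cobordism version''. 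That cobordism h-principle \emph{is} the content of \cite{EM}; as a proof of the present theorem your argument is therefore circular (or, read charitably, an unproved outline of the cited proof), since none of the genuinely hard steps (existence of the contact-type slice, the overtwisted modification, boundary behaviour, equivariant gluing) is carried out.

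There is also a concrete error in your last paragraph. The theorem only asserts compatibility of $\omega$ with the \emph{orientation} induced by $J$ (i.e.\ $\omega^n>0$ for the $J$-orientation); it does not assert the taming condition $\omega^{1,1}>0$, and your plan to secure taming by keeping the h-principle deformation $C^0$-close to $\omega_0=g(J\cdot,\cdot)$ cannot work. H-principle constructions of this flexible type (insertion of overtwisted data, holonomy corrections across the fundamental domain) give no $C^0$ control whatsoever, and in fact taming is impossible in general: as the paper itself observes right after the statement, the forms produced by \cite{EM} are $d_\a$-exact, and an exact LCS form can never tame an (almost) complex structure admitting a rational curve, since a conformal change makes $\a$ vanish near the curve and Stokes' theorem then forces $\int_{{\bb C}P^1}\omega\le 0$. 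So the goal you set yourself in that paragraph is both unnecessary for the statement and unattainable; the orientation requirement, by contrast, is automatic once the formal datum is taken in the homotopy class of $g(J\cdot,\cdot)$, because the h-principle stays within non-degenerate forms of the given formal class.
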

The LCS forms $\omega$ constructed in \cite{EM} are in fact {\it exact} in the sense that $$\omega = d_{\a} \beta := d\beta - \a\wedge \beta$$ for a $1$-form $\beta$ on $M$. On a compact complex surface containing a rational curve, such LCS forms cannot tame the underlying complex structure, i.e. generically they  are different from the LCS structures provided by Theorems~\ref{thm:lcs}.

\bigskip
The main result of this paper is the following structure theorem for the set $\mathcal{T}(S)$.
\begin{thm}\label{thm:A} Let $S$ be a compact complex surface with first Betti number equal to $1$. Then, either $\mathcal{T}(S)$ is a non-empty open subset in $H^1_{dR}(S, \bb R)$, or else $\mathcal{T}(S)$ is a single point and $S$ is a blow-up of an Inoue--Bombieri surface~\cite{inoue}.
\end{thm}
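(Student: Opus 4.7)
The plan is to establish openness of $\mathcal{T}(S)$ at a point via a Lichnerowicz--Novikov deformation argument, and then to show that any failure of openness pins $S$ down to a blow-up of an Inoue--Bombieri surface, for which $\mathcal{T}(S)$ is known to be a single point.

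Given a tame LCS form $\omega$ with Lee form $\alpha$ and a small closed $1$-form $\beta$, I would look for $\omega' = \omega + \eta$ tame LCS with Lee form $\alpha + \beta$. Expanding $d\omega' = (\alpha + \beta)\wedge\omega'$ yields
\[
d_\alpha \eta = \beta\wedge\omega + \beta\wedge\eta, \qquad d_\alpha := d - \alpha\wedge\,.
\]
Since $\omega$ is $d_\alpha$-closed and $\beta$ is $d$-closed, the right-hand side at $\eta=0$ is $d_\alpha$-closed. An implicit function theorem argument in a suitable Sobolev completion, together with the openness of the taming condition $(\omega')^{1,1}>0$, reduces openness of $\mathcal{T}(S)$ at $[\alpha]$ to the vanishing of the cup-product map
\[
H^1_{dR}(S,\mathbb{R}) \longrightarrow H^3_\alpha(S,\mathbb{R}),\qquad [\beta]\mapsto [\beta\wedge\omega].
\]
Via the Poincar\'e--Lichnerowicz duality $H^3_\alpha(S,\mathbb{R})\cong H^1_{-\alpha}(S,\mathbb{R})^*$, the non-vanishing of this obstruction is equivalent to the existence of a non-trivial twisted closed $1$-form $\gamma$ on $S$ satisfying $d\gamma + \alpha\wedge\gamma = 0$.

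If the obstruction vanishes at even a single point of $\mathcal{T}(S)$, then $\mathcal{T}(S)$ contains an open interval around that point in the one-dimensional space $H^1_{dR}(S,\mathbb{R})\cong\mathbb{R}$. Conversely, if the obstruction is present, I would exploit the $(1,0)$--$(0,1)$ type decomposition of $\gamma$ to extract a non-trivial holomorphic section of a flat line bundle naturally associated to $\alpha$, and hence a holomorphic foliation (or twisted holomorphic trivialization) on $S$. Combining this rigid holomorphic datum with the Kodaira--Enriques classification of compact complex surfaces with $b_1(S) = 1$ (class VII surfaces and non-K\"ahler properly elliptic surfaces), together with the vanishing of the Lichnerowicz--Novikov cohomology on class VII surfaces with $\pi_1\cong\mathbb{Z}$ announced in the abstract and the known exclusions for Hopf, Enoki and GSS models, should force $S$ to be a blow-up of an Inoue--Bombieri surface.

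The argument then closes by a separate verification that Inoue--Bombieri surfaces (and their blow-ups) admit a unique Lee class, coming from the rigidity of the $\Gamma$-structure on $\mathbb{H}\times\mathbb{C}$ underlying their construction: any tame LCS structure has the same Lee class, so $\mathcal{T}(S)$ is reduced to a single point. The main obstacle will be this third step, namely translating a single twisted cohomology obstruction into sufficient holomorphic structure to recognize $S$ as an Inoue--Bombieri blow-up, while correctly handling the blow-up reduction and ruling out all remaining class VII and non-K\"ahler elliptic models. This is where the surface-theoretic results of the later sections, including the characterization of Enoki surfaces via special foliations, will play a decisive role.
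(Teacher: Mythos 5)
The skeleton you propose (an openness criterion phrased in twisted cohomology, then pinning down the non-open case) is indeed the paper's, but two of your steps contain genuine gaps. First, in the deformation step the vanishing of the single cup product $[\beta\wedge\omega]\in H^3_{\alpha}(S,\mathbb{R})$ does not suffice: writing $\omega_t=\omega+t\eta_1+t^2\eta_2+\cdots$, the order-$t^2$ equation $d_{\alpha}\eta_2=\beta\wedge\eta_1$ produces a new obstruction class $[\beta\wedge\eta_1]\in H^3_{\alpha}(S,\mathbb{R})$, and so on at every order; this is why Proposition~\ref{prop:open} assumes the vanishing of the whole group $H^3_{\alpha}(S,\mathbb{R})\cong\big(H^1_{-\alpha}(S,\mathbb{R})\big)^*$ and solves the recursion with the Green operator of the twisted Laplacian, together with a convergence and regularity argument. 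The contrapositive you need still comes out (non-openness gives some $a\in\mathcal{T}(S)$ with $H^1_{-\alpha}(S,\mathbb{R})\neq 0$), but your claimed equivalence between ``the obstruction is nonzero'' and the existence of $\gamma$ with $d\gamma+\alpha\wedge\gamma=0$ is only one implication, and your ``vanishing at a single point gives an interval'' claim is false as stated.

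The substantial gap is the passage from $H^1_{-\alpha}(S,\mathbb{R})\neq 0$, for a \emph{taming} Lee class, to ``blow-up of an Inoue--Bombieri surface''. On a surface whose minimal model is in ${\rm VII}_0^+$ one does not extract a holomorphic section of a flat bundle: there $H^0(S,\mathcal{L}_{\alpha})=H^1(S,\mathcal{L}_{\alpha})=0$, and the twisted class instead yields a function $f$ solving $d_{-\alpha}d^c_{-\alpha}f=0$; the decisive step in the paper is a maximum-principle/principal-eigenvalue argument, using the Gauduchon-type identity satisfied by the metric $F=\omega^{1,1}$ of the taming LCS form, to make $f$ nowhere vanishing and hence produce $\tilde\alpha\in a$ with $dJ\tilde\alpha+\tilde\alpha\wedge J\tilde\alpha=0$ (condition (1) of Theorem~\ref{thm:main}); this is exactly where the taming hypothesis enters and your sketch is silent. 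The identification of $S$ is then made not via the Enriques--Kodaira list combined with the Enoki-foliation characterization or the $\pi_1\cong\mathbb{Z}$ vanishing theorem --- neither is used for Theorem~\ref{thm:A}, and the latter is unavailable here because class ${\rm VII}_0^+$ surfaces are not known to have fundamental group $\mathbb{Z}$ --- but via Lemma~\ref{l:choise-toma}, which converts the identity into an automorphic positive pluriharmonic function on the infinite cyclic cover, and Brunella's theorem \cite{brunella-inoue}, which is the essential external input missing from your plan. You would also need the Vaisman-type vanishing of \cite{leon} to exclude Hopf and secondary Kodaira minimal models (your classification list omits secondary Kodaira surfaces, while properly elliptic surfaces do not occur with $b_1=1$ by Theorem~\ref{thm:kodaira}), and an actual argument for uniqueness of the Lee class on Inoue--Bombieri blow-ups: in the paper this is the computation $0=t(t-1)\int_S\alpha\wedge J\alpha\wedge\omega^{1,1}$ forcing $t=1$, again using taming; ``rigidity of the $\Gamma$-structure'' is not a proof.
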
 
The proof of this result relies, at one hand,  on a characterization of the case when $\mathcal{T}(S)$ is not open in terms of the ``K\"ahler rank'' theory developed in \cite{CT} (see Theorem~\ref{thm:main}, Remark~\ref{kahler-rank} and Lemma~\ref{l:choise-toma} below),  and, at the other hand, on the characterization of the blow-ups of Inoue--Bombieri surfaces obtained by Brunella in \cite{brunella-inoue}.

As observed recently in \cite{otiman},  Theorem~\ref{thm:A} shows that on the Inoue--Bombieri complex surfaces (and their blow-ups) the existence result for LCS structures provided by Theorem~\ref{thm:lcs}  is complementary to the one provided by  Theorem~\ref{thm:EM}, see Corollary~\ref{c:exact} below.

\smallskip
Theorem~\ref{thm:A} is to be compared with recent results of R. Goto~\cite{G}  about  the deformations of Lee classes of locally conformally K\"ahler structures. As a matter of fact, combining \cite[Thm. 2.3]{G} with Theorem~\ref{thm:main} in this paper, we obtain the following
\begin{thm}\label{thm:B} Let $S$ be a compact complex surface with first Betti number equal to $1$ and  $\mathcal{C}(S) \subset \mathcal{T}(S)$ the set of Lee classes of locally conformally K\"ahler structures on $S$, i.e.
$$\mathcal{C}(S)=\{ [\a] \in H^1_{dR}(S, \bb R) : \exists \  \omega \in \mathcal{E}^{1,1}(S, \bb R), \ \omega>0, \ d\omega =\a \wedge \omega\}.$$
Then  $\mathcal{C}(S)$ is either empty, a single point or a non-empty open subset in $H^1_{dR}(S, \bb R)$. 
\end{thm}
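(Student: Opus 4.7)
The plan is to combine the structural result for $\mathcal{T}(S)$ already established (Theorem~\ref{thm:A}, and more precisely its refinement Theorem~\ref{thm:main}) with R.~Goto's deformation theorem \cite[Thm.~2.3]{G}. Since $\mathcal{C}(S) \subset \mathcal{T}(S)$, the desired trichotomy for $\mathcal{C}(S)$ should follow from the dichotomy already known for $\mathcal{T}(S)$ once the deformability of LCK structures is understood.

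First I would dispose of the trivial alternative: if $\mathcal{C}(S) = \emptyset$, there is nothing to prove. Suppose then that $[\alpha] \in \mathcal{C}(S)$ is realized by an LCK form $\omega \in \mathcal{E}^{1,1}(S,\mathbb{R})$ with $\omega > 0$ and $d\omega = \alpha \wedge \omega$. Theorem~\ref{thm:A} applied to $\mathcal{T}(S) \supset \mathcal{C}(S)$ then splits the analysis into two sub-cases. \emph{Case 1.} If $S$ is a blow-up of an Inoue--Bombieri surface, then $\mathcal{T}(S)$ is a single point, so $\mathcal{C}(S)$ contains at most one point, and here $\mathcal{C}(S) = \{[\alpha]\}$. \emph{Case 2.} If $\mathcal{T}(S)$ is a non-empty open subset of $H^1_{dR}(S,\mathbb{R})$, I would invoke Theorem~\ref{thm:main} to check that the cohomological obstruction appearing in the hypothesis of \cite[Thm.~2.3]{G} vanishes at $[\alpha]$. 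Goto's theorem would then smoothly deform $\omega$ into LCK forms realizing every Lee class in some open neighborhood $U \subset H^1_{dR}(S,\mathbb{R})$ of $[\alpha]$, so that $U \subset \mathcal{C}(S)$. Since $[\alpha] \in \mathcal{C}(S)$ was arbitrary, this proves that $\mathcal{C}(S)$ is open.

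The main delicate step is verifying that the hypothesis of \cite[Thm.~2.3]{G} is exactly complementary to the exceptional case isolated by Theorem~\ref{thm:main}. In other words, the content of the argument lies entirely in matching the non-Inoue--Bombieri alternative of Theorem~\ref{thm:main} with the cohomological input needed to unobstruct Goto's deformation \emph{within} the LCK class (and not merely in the larger LCS class). Once this match is established, the two cases above produce the three possibilities for $\mathcal{C}(S)$, namely empty, a single point, or a non-empty open subset, and no additional analytic argument is required: the positivity condition $\omega > 0$ defining LCK forms is open, and the Lee class depends continuously on $\omega$, so Goto's local deformation automatically stays inside $\mathcal{C}(S)$.
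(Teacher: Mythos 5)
There is a genuine gap, and it sits exactly at the point you flag as ``the main delicate step'' but do not carry out. Goto's theorem (\cite[Thm.~2.3]{G}, stated as Theorem~\ref{thm:goto} in the paper) has \emph{two} hypotheses: the vanishing $H^3(S,L^*_{\a})=\{0\}$, and the condition that every $\bar\partial_{\a}$-closed $(0,2)$-form $\psi$ admits $\gamma$ with $\partial_{\a}\psi=\partial_{\a}\bar\partial_{\a}\gamma$. Theorem~\ref{thm:main} (equivalently the non--Inoue--Bombieri alternative of Theorem~\ref{thm:A}) only delivers the first one: if $\mathcal{T}(S)$ is open then $H^1_{d_{L_a}}(S,L_a)=\{0\}$ for every $a\in\mathcal{T}(S)$, hence $H^3(S,L_a^*)=\{0\}$ by duality. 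It says nothing about the second hypothesis, which is precisely the extra input needed to deform \emph{within} the LCK class rather than the LCS class, so your claimed ``exact complementarity'' between Goto's hypotheses and the exceptional case of Theorem~\ref{thm:main} is not established and is in fact false as a blanket statement. The paper verifies the second hypothesis only when the minimal model lies in ${\rm VII}_0^+$, via the vanishing $H^{0,2}_{\bar\partial_{\a}}(S,\bb C)\cong H^2(S,\cal L_{\a}^*)\cong H^0(S,K_S\otimes\cal L_{\a})^*=\{0\}$ of Lemma~\ref{l:vanishing}; that lemma is specific to class ${\rm VII}_0^+$ and fails in general for Hopf and secondary Kodaira surfaces (e.g.\ on a diagonal Hopf surface $H^0(S,K_S\otimes\cal L)$ is nonzero for suitable flat $\cal L\in{\rm Pic}^0(S)$), even though these surfaces also fall in your ``$\mathcal{T}(S)$ open'' case.

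Because of this, the paper does not apply Goto's theorem uniformly. After reducing to the minimal model via $\mathcal{C}(S)=\mathcal{C}(S_0)$ (\cite{Vul,tricerri}), it argues by cases: for $S_0\in{\rm VII}_0^+$ it runs the Goto argument you sketch (with Lemma~\ref{l:vanishing} supplying the second hypothesis, and Theorem~\ref{thm:main} showing that failure of the first hypothesis forces $\mathcal{C}(S)\subset\mathcal{T}(S)=\{[\a]\}$); for Hopf and secondary Kodaira surfaces it bypasses Goto entirely and shows directly that $\mathcal{C}(S)=\mathcal{T}(S)=(-\infty,0)$, using the Vaisman LCK metrics of \cite{B,GO} and \cite[Prop.~5.1, Lemma~3.7]{lcs}, which gives openness without any deformation theory; for Inoue--Bombieri surfaces $\mathcal{C}(S)\subset\{a\}$ as you say. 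To repair your argument you must either verify Goto's second hypothesis at every Lee class of an LCK structure on Hopf and secondary Kodaira surfaces (which is not automatic, since the sufficient condition $H^{0,2}_{\bar\partial_{\a}}=0$ can fail there), or replace Goto's theorem in those cases by the direct description of $\mathcal{C}(S)$ as in the paper.
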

Examples of either type do exist,  due to~\cite{B,tricerri}, see \cite[Thm.~1.4]{lcs}.

\bigskip
The proofs of Theorems~\ref{thm:A} and \ref{thm:B} use a vanishing result for the first cohomology group $H_{d_{L}}^1(S, L)$ associated to the sheaf of parallel sections of the flat real line bundle $L$  corresponding to a deRham class $a\in \mathcal{T}(S)$ (see Theorem~\ref{thm:main}), which holds true for all surfaces with $b_1(S)=1$ except the blow-ups of Inoue--Bombieri surfaces,  according to \cite[Thm.~1]{brunella-inoue}, Theorem~\ref{thm:main},  and Lemma~\ref{l:choise-toma} below. One is thus naturally led to ask whether or not the assumption $a\in \mathcal{T}(S)$ can be removed from this statement.  In the final section of the paper,  we recollect some observations regarding this and some related questions,  and establish the following vanishing result.
\begin{thm} \label{thm:Z-vanishing} Let $S$ be a compact complex surface with first Betti number equal to $1$ and fundamental group isomorphic to $\bb Z$. Then, for any real flat bundle  $L$ associated to a class $a\neq 0\in H^1_{dR}(S, \bb R) \stackrel{\exp}{\cong} H^1(S, \bb R^*_{+})$, 
$${\rm dim}_{\bb R} H_{d_{L}}^k(S, L)=0, \ k \neq 2, \ \ {\rm dim}_{\bb R} H_{d_{L}}^2(S, L)=  {\rm dim}_{\bb R} H^2(S, \bb R).$$
\end{thm}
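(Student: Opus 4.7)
The plan is to identify $H^*_{d_L}(S,L)$ with the sheaf cohomology $H^*(S;\cal L)$ of the local system $\cal L$ of parallel sections of $L$ (via the de Rham theorem for flat vector bundles), and then to compute the latter using the universal cover $\pi\colon\tilde S\to S$. Since $\pi_1(S)\cong\bb Z$, this is a regular $\bb Z$-covering; fix a generator $\gamma$ of the deck group and set $\lambda:=\rho(\gamma)\in\bb R^*_+$, the corresponding monodromy of $L$. The assumption $a\neq 0$ is equivalent to $\lambda\neq 1$, and this is the only numerical input needed from the hypotheses.

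The first concrete step is to apply the Cartan--Leray spectral sequence
$$E_2^{p,q}=H^p\bigl(\bb Z;H^q(\tilde S;\pi^*\cal L)\bigr)\Rightarrow H^{p+q}(S;\cal L).$$
As $\tilde S$ is simply connected, $\pi^*\cal L$ is (non-canonically) trivial, so $H^q(\tilde S;\pi^*\cal L)\cong H^q(\tilde S;\bb R)$, and the $\bb Z$-action is the composition of the pullback $\gamma^*$ with a scaling by $\lambda^{\pm 1}$. Since $\bb Z$ has cohomological dimension one, the spectral sequence degenerates at $E_2$ into short exact sequences
$$0\to\coker(A|_{H^{k-1}(\tilde S;\bb R)})\to H^k(S;\cal L)\to \ker(A|_{H^k(\tilde S;\bb R)})\to 0,$$
where $A$ denotes (up to a nonzero scalar) the operator $\lambda\gamma^*-1$.

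Next I would read off the low-degree vanishings. On $H^0(\tilde S;\bb R)=\bb R$ one has $\gamma^*=\mathrm{id}$, so $A=\lambda-1\neq 0$ is invertible, while $H^1(\tilde S;\bb R)=0$ because $\tilde S$ is simply connected. The sequences at $k=0,1$ immediately give $H^0(S;\cal L)=H^1(S;\cal L)=0$. The same argument applied to the dual local system $\cal L^{-1}$, associated to the nonzero class $-a$, yields $H^0(S;\cal L^{-1})=H^1(S;\cal L^{-1})=0$; Poincaré duality for local coefficients on the oriented 4-manifold $S$, $H^k(S;\cal L)\cong H^{4-k}(S;\cal L^{-1})^*$, then upgrades these to $H^3(S;\cal L)=H^4(S;\cal L)=0$.

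To close the argument I would invoke the invariance of the Euler characteristic under twisting by a rank-one flat bundle, $\chi(S;\cal L)=\chi(S)$, combined with the fact that when $b_1(S)=1$ Poincaré duality on $S$ gives $\chi(S)=b_2(S)$. Together with the four vanishings this forces $\dim_{\bb R}H^2_{d_L}(S,L)=b_2(S)=\dim_{\bb R}H^2(S;\bb R)$, as claimed. The argument is entirely formal, and the only step where any care is required is the correct identification of the twisted $\bb Z$-action on $H^*(\tilde S;\bb R)$ arising from Cartan--Leray; but since the conclusion depends only on $\lambda\neq 1$ (which is symmetric under $\lambda\mapsto\lambda^{-1}$), this is not a genuine obstacle.
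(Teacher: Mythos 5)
Your proposal is correct, and it reaches the theorem by a genuinely different route from the paper. The endgame is shared: the paper also reduces the statement, via \eqref{poincare}, \eqref{vanishing-0} and \eqref{euler}, to showing $H^1_{d_L}(S,L)=\{0\}$. But you obtain that vanishing purely topologically: for the universal $\mathbb{Z}$-cover the Cartan--Leray (Wang) sequence gives $H^1(S;\mathcal L)$ as an extension of $\ker(A|_{H^1(\tilde S;\mathbb R)})=0$ by ${\rm coker}(\lambda-1)=0$, so the only input is $\lambda\neq 1$, which indeed corresponds to $a\neq 0$ because $H^1(S,\mathbb R^*_+)\cong{\rm Hom}(\mathbb Z,\mathbb R^*_+)$; one can even avoid spectral sequences by pulling a $d_{\a}$-closed $1$-form back to $\tilde S$, writing it as $e^{\tilde f}d\tilde g$ and shifting $\tilde g$ by a constant to make it equivariant --- which is precisely the trick the paper runs with \emph{holomorphic} data in Case 2 of the proof of Theorem~\ref{thm:Enoki-characterization} (cf.\ Remark~\ref{cyclic-cover}). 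The paper's own proof of the $H^1$-vanishing is instead complex-geometric: reduction to the minimal model (Lemma~\ref{blow-up}), the classification of Theorem~\ref{thm:kodaira}, the Vaisman-metric vanishing of Lemma~\ref{leon} for Hopf and Kodaira cases and for blown-up Enoki surfaces, exclusion of Inoue--Bombieri surfaces since their fundamental group is not $\mathbb Z$, and, in class ${\rm VII}_0^+$, Lemma~\ref{c:third} together with the Enoki characterization Theorem~\ref{thm:Enoki-characterization}. What your route buys is brevity and generality: it uses no classification or metric results and in fact proves the conclusion for any compact oriented four-manifold with infinite cyclic fundamental group, the complex structure being irrelevant. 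What the paper's route buys is Theorem~\ref{thm:Enoki-characterization} itself, a statement about $H^0(S,\Omega^1\otimes\mathcal L)$ of independent interest that your topological argument does not see. In a written-up version you should spell out the identification $H^*_{d_L}(S,L)\cong H^*(S;\mathcal L)$ with local-system cohomology and note that the duality and the equality $\chi(S;\mathcal L)=\chi(S)$ you invoke are exactly \eqref{poincare} and \eqref{euler} of the paper; with those references in place, the argument is complete.
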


\section{Preliminaries}\label{s:preliminaries}

Let $X=(M,J)$ be a compact complex manifold of complex dimension $n$, and  $\alpha$ a closed $1$-form  on $X$, representing de Rham class $a=[\alpha] \in H^1_{dR}(X, \bb R)$.  We denote by $L_{\alpha}=X \times \bb R$ the topologically trivial real line bundle 
over $X$,  endowed with  the flat connection $\nabla^{\alpha} s := d s + \alpha\otimes s$,  where $s$ is a smooth  function on $X$, also viewed as a smooth section of $L_{\a}$. Similarly, $\nabla^{\alpha}$ induces a holomorphic structure on the complex bundle  $\mathcal{L}_{\a} := L_{\alpha} \otimes \mathbb{C}$ such that parallel sections are holomorphic.  Writing $\a_{\mid U_i} = df_i$ on an open covering $\mathfrak{U}= (U_i)$ of $X$,  $\{(U_i, e^{-f_i})\}$ defines a parallel (respectively holomorphic) trivialization of $L_{\alpha}$ (resp. of $\cal L_{\a}$) with transition functions $e^{f_i-f_j}$ on $U_i \cap U_j$. With respect to this trivialization, $s_0=(U_i, e^{f_i})$ is a nowhere vanishing smooth section of $L_{\alpha}$. This constructions fits in  into the sequence of natural morphisms
\begin{equation}\label{H1}
H^1_{dR}(X,\bb R)\stackrel{\exp}{\cong}H^1(X, \bb R^*_{+}) {\longrightarrow} H^1(X,\bb C^*) \longrightarrow {\rm Pic}(X), 
\end{equation}
where $\mathbb R^*_{+}$ denotes the sheaf of locally constant positive real functions,  and ${\rm Pic}(X) = H^1(X, \cal O^*)$ is the group of isomorphism classes of holomorphic line bundles.  Indeed,  $L_{\alpha}$ represents  the isomorphism class  $\exp(a)\in H^1(X, \bb R^*_{+})$ given by \eqref{H1}  whereas  $\cal L_{\a}$ represents  its image in ${\rm Pic}^0(X),$ where  ${\rm Pic}^0(X)$ denotes the subgroup of $H^1(X, {\cal O}^*)$ of isomorphism classes of holomorphic line bundle with zero first Chern class. 

In what follows,  we shall tacitly identify $L_{\a}$ and $L_{\tilde \a}$ (resp. $\mathcal L_{\a}$  and $\mathcal{L}_{\tilde \a}$) for any two  $\a, \tilde \a \in a$, and denote (with a slight abuse of notation) by $L_{a}$ (resp. $\cal L_{a}$)  a  flat line bundle  obtained by some choice of $\a \in a$; we shall refer to  $L_{a}$  (resp. $\cal L_a$) as the flat real (resp. the flat holomorphic) line bundle {corresponding} to $a\in H^1_{dR}(X, \bb R)$.   Similarly, we shall implicitly identify a flat real line bundle  (resp. a holomorphic line bundle) with the class it represents in $H^1(X, \bb R^*_{+})$ (resp. in ${\rm Pic}(X)$).


We denote by  $\mathcal{E}^k(X, \bb R)$, resp. $\mathcal{E}^{p,q}(X, \bb C)$ the space of smooth real $k$-forms on $X$,  resp. of  smooth complex-valued $(p,q)$-forms on $X$.
The $\a$-twisted differential 
$$d_{\alpha}: =d-\alpha\wedge \cdot$$ defines the Lichnerowicz--Novikov complex
\begin{equation}\label{lichne-novikov}
\cdots \stackrel{d_{\alpha}}{\to}\cal E^{k-1}(X, \bb R) \stackrel{d_{\alpha}}{\to} \cal E^{k}(X, \bb R)  \stackrel{d_{\alpha}}{\to}\cdots 
\end{equation}
which is  isomorphic to the de Rham complex of differential forms with values in $L^*$
\begin{equation}\label{twisted-deRham}
\cdots \stackrel{d_{L^*}}{\to}\cal E^{k-1}(X,L^*) \stackrel{d_{L^*}}{\to} \cal E^{k}(X, L^*)  \stackrel{d_{L^*}}{\to}\cdots
\end{equation}
associated to the sheaf of locally constant sections of $L^*$. This can be viewed by writing $\alpha_{\mid U_i}=df_i$ on an open covering  $\mathfrak {U}=(U_i)$ of $X$: then, for any  $d_{\alpha}$-closed smooth form $\omega$ on $X$,   ${\omega_i}_{\mid U_i} := e^{-f_i}\omega$ gives rise to a smooth section  in $\mathcal{E}^k(X, L^*),$ satisfying $d\omega_i=0$ on $U_i$, i.e. $\{U_i, \omega_i)$ defines a $d_{L^*}$-closed form with values in $L^*$.  In particular,  we have an isomorphism between the cohomology groups
\begin{equation}\label{deRham-isom}
H^{k}_\a(X, \bb R)\cong H^k_{d_{L^*}}(X,L^*),
\end{equation}
associated to the complexes \eqref{lichne-novikov} and \eqref{twisted-deRham}, respectively.

Considering complex-valued forms, one can similarly introduce the operators 
$$d_{L^*}=\part _{\mathcal {L^*}}+\bpart_{\mathcal {L}^*}, \quad {\rm and}\quad d_{\alpha}=\part_{\alpha}+\bpart_{\alpha}$$
with 
$$\part_{\alpha}=\part-\alpha^{1,0}\w\quad {\rm and}\quad \bpart_{\alpha}=\bpart-\alpha^{0,1}\w,$$
acting respectively on $\cal E^{p,q}(X, \cal L^*)$ and $\cal E^{p,q}(X, \bb C)$. These  give rise to the isomorphisms 
\begin{equation}\label{dolbeault-isom}
H^{p,q}_{\bar \partial_{\alpha}}(X, \bb C)\cong H^{p,q}(X,\cal L^*) \cong H^q(X, \O^p\otimes \cal L^*),
\end{equation}
where  $H^{p,q}_{\bar \partial_{\a}}(X, \bb C)= {\rm Ker}(\bar \partial_{\a})/{\rm Im}(\bar \partial_{\a})$ whereas $H^{p,q}(X,\cal L^*)\cong H^q(X, \O^p\otimes \cal L^*)$ is the usual Dolbeault cohomology group of $X$ with values in the flat holomorphic line bundle ${\mathcal L}^*$, and  $\O^p$ stands for  the holomorphic vector bundle of $(p,0)$ forms on $X$.

For any $(k-1)$-form $\phi$ and $(2n-k)$-form $\psi$ on $X$, we have
\begin{equation*}\label{star}
d (\phi \wedge \psi) = (d_{\alpha} \phi) \wedge \psi  + (-1)^{k-1} \phi \wedge (d_{-\alpha}\psi).
\end{equation*}
Integrating the above formula over the closed manifold $X$ leads to a natural pairing between $H^k_{\a}(X, \bb R)$ and $H^{2n-k}_{-\a}(X, \bb R)$. 

For any Riemannian metric $g$ on $X$, the $L^2$ adjoint of $d_{\a}$ is 
\begin{equation*}\label{*d_a}
d^*_{\a}= -* d_{-\a} *,
\end{equation*}
where $*$ is the Hodge operator with respect to $g$. (We  have used that $X$ is oriented and even dimensional.) It follows that the corresponding twisted Laplace operator $\Delta_{\a}^g= d_{\a} d^*_{\a} + d^*_{\a} d_{\a}$ has the same index as the usual Laplacian $\Delta^g$ and satisfies
$$ * \Delta^g_{\a} = \Delta^g_{-\a} *.$$
Hodge theory (see e.g. \cite{AK,Simpson}) and \eqref{deRham-isom}  then imply  that  the spaces $H^k_{\a}(X, \bb R)$ are finite dimensional and the pairing between $H^k_{\a}(X, \bb R)$ and $H^{2n-k}_{-\a}(X, \bb R)$ defined above is a perfect pairing, i.e. 
\begin{equation}\label{poincare}
H^k_{\a}(X, \bb R) \cong \big(H^{2n-k}_{-\a}(X, \bb R)\big)^*, \ \  H_{d_L}^{k}(X, {L}) \cong \big(H_{d_{L^*}}^{2n-k}(X, {L}^*)\big)^*,
\end{equation}
where the upper  $*$ denotes the dual vector space. The index theorem also implies (as observed in \cite{FP})
\begin{equation}\label{euler}
\sum_{k=0}^{2n} (-1)^k {\rm dim}_{\bb R} \ H^k_{\a}(X, \bb R) = e(X),
\end{equation}
where $e(M)$ is the Euler characteristic of $M$. Notice that if $[\a] \neq 0 \in H^1_{dR}(X, \bb R)$, then 
\begin{equation}\label{vanishing-0}
H^0_{\a}(X, \bb R) = H^{2n}_{\a}(X, \bb R)= \{0\},
\end{equation}
Indeed, suppose $d_{\a} f =0$ for some smooth non-zero function $f$. This means that $f$ satisfies the linear system $df = f \a$, so  $f$ cannot vanish on $X$, showing that $\a= d\log |f|$, a contradiction. Thus,  $H^{0}_{\a}(X, \bb R)= \{0\}$ and $H^{0}_{-\a}(X, \bb R) \cong (H^{2n}_{\a}(X, \bb R))^*=\{0\}$.

We shall use the following elementary fact
\begin{lem}\label{cover} Suppose $M$ is a compact manifold and $p: \tilde M \to M$ a finite cover. For any flat real line bundle $L=L_\a$ denote $\tilde L = L_{\tilde \a}$ the corresponding pullback to $\tilde M$, where $\tilde \a = p^* \a$. Then the natural pull-back map $p^*: H_{\a}^k(M, \bb R) \to H_{\tilde \a}^k(\tilde M, \bb R)$ is injective.
\end{lem}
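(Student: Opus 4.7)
The plan is to construct an integration-along-fibers (transfer) map $p_*\colon \cal E^k(\tilde M,\bb R)\to \cal E^k(M,\bb R)$ which is a left inverse of $p^*$ up to the factor $n=\deg(p)$, and which commutes with the twisted differentials. Once such a $p_*$ is produced, the injectivity follows at once: if $\omega\in\cal E^k(M,\bb R)$ satisfies $d_{\a}\omega=0$ and $p^*\omega=d_{\tilde\a}\eta$ for some $\eta\in \cal E^{k-1}(\tilde M,\bb R)$, then
\[
n\,\omega \;=\; p_*p^*\omega \;=\; p_*\,d_{\tilde\a}\eta \;=\; d_{\a}\,p_*\eta,
\]
so $[\omega]=0$ in $H^k_{\a}(M,\bb R)$.

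The transfer $p_*$ is defined pointwise by summing over the sheets of the cover: since $p$ is a local diffeomorphism, for each $x\in M$ the preimage $p^{-1}(x)=\{y_1,\dots,y_n\}$ is finite, and on a sufficiently small neighborhood $U$ of $x$ we have disjoint sheets $V_1,\dots,V_n$ with $p\colon V_i\to U$ a diffeomorphism; we set
\[
p_*\eta \;:=\; \sum_{i=1}^n \bigl(p_{|V_i}\bigr)_*\eta.
\]
This is a well-defined smooth $k$-form on $M$, and the identity $p_*p^*=n\cdot\mathrm{id}$ is immediate from the definition.

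The commutation with twisted differentials rests on two facts. First, because $p$ is a local diffeomorphism, $p_*$ commutes with the ordinary exterior derivative, $p_* d = d\, p_*$. Second, by the projection formula (which, again, reduces to a sheet-by-sheet computation since $p^*\a$ is a local pullback of $\a$),
\[
p_*\bigl((p^*\a)\w \eta\bigr) \;=\; \a\w p_*\eta.
\]
Combining these gives $p_* d_{\tilde\a}=d_{\a}p_*$, as $d_{\tilde\a}=d-p^*\a\w\cdot$ and $d_{\a}=d-\a\w\cdot$. Substituting into the argument of the first paragraph completes the proof.

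There is no real obstacle here; the statement is a standard transfer/averaging argument adapted to the Lichnerowicz--Novikov complex, the only point requiring a moment's care being the verification that $p_*$ intertwines $d_{\tilde\a}$ with $d_{\a}$, which follows from the hypothesis $\tilde\a=p^*\a$ via the projection formula on each sheet.
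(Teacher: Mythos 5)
Your proof is correct, and it is a close cousin of the paper's argument, but the mechanism is slightly different and worth comparing. The paper works directly with the primitive: writing $M=\tilde M/\Gamma$ for a finite group $\Gamma$ of deck transformations, it averages $\tilde\gamma$ over $\Gamma$, notes that the average is $\Gamma$-invariant and still a $d_{\tilde\a}$-primitive of $p^*\beta$ (since $p^*\beta$ and $\tilde\a$ are invariant), and lets it descend to $M$. You instead build the cochain-level transfer $p_*$ by summing over sheets, check $p_*p^*=n\cdot\mathrm{id}$ and, via the projection formula, $p_*d_{\tilde\a}=d_{\a}p_*$, and then divide by $n$. The two constructions coincide when the cover is regular (your $\tfrac1n p_*\tilde\gamma$ is exactly the descended average), but your version is marginally more general: the paper's phrasing presupposes that $p$ is a Galois cover, i.e.\ that the deck group acts with quotient exactly $M$, whereas the fiberwise transfer needs no such hypothesis and works for an arbitrary finite cover. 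What the paper's route buys is brevity -- no need to verify well-definedness and the chain-map property of a new operator, since everything is done at the level of one explicit primitive. Both arguments are complete; your only implicit steps (smoothness and independence of the sheet decomposition for $p_*$, and the sheet-by-sheet projection formula) are routine and you flag them appropriately.
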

\begin{proof} We need to show that if $\beta$ is  $d_{\a}$-closed $k$-form $\beta$ on $M$ such that $\tilde \beta :=p^*(\beta)=d_{\tilde \a} \tilde \gamma$  on $\tilde M$, then $\beta$ is $d_\a$-exact on $M$. Denote by  $\Gamma$ the finite group of diffeomorphisms of $\tilde M$ such that $M= \tilde M/\Gamma$.  As both $\tilde \a$  and $\tilde \b$ are  invariant under the action of $\Gamma$ of $\tilde M$, the average of $\tilde \gamma$ over $\Gamma$ is a $\Gamma$-invariant $(k-1)$-form $\gamma$  on $\tilde M$,   satisfying  $\tilde \b = d_{\tilde \a} \gamma$. As $\gamma$ descends to $M$ (being $\Gamma$-invariant), we also have $\beta = d_{\a} \gamma$ on $M$. \end{proof}

\smallskip
Similarly to \eqref{poincare}, we have  isomorphisms
\begin{equation*}\label{serre}
H_{\bar \partial_{\a}}^{p, q}(X, \bb C) \cong (H^{n-p, n-q}_{\bar \partial_{-\a}}(X, \bb C))^*, \ \  H^{p,q}(X, \cal L) \cong \big(H^{n-p, n-q}(X, \cal L^*)\big)^*
\end{equation*}
where the second identification is the usual Serre duality and the first follows from the second via \eqref{dolbeault-isom}.

\bigskip
We shall now specialize to the case when $X=S$ is a compact complex surface. For a flat real line bundle $L:=L_{\a}$ and $\cal L= L_{\a}\otimes \bb C$ the corresponding flat holomorphic line bundle,  we shall denote by
\begin{enumerate}
\item[$\bullet$] $b_k(S, L) := {\rm dim}_{\bb R} \ H^k_{d_{L}}(S,L)={\rm dim}_{\bb R} \ H_{-\a}^k(S, \bb R),$
\item[$\bullet$] $h^{p,q}(S, \cal L) := {\rm dim}_{\bb C} H^{p,q}(S,\cal L)={\rm dim}_{\bb C} H^{p,q}_{\bar \partial_{-\a}}(S, \bb C),$
\end{enumerate}
the corresponding dimensions.  We then have 
\begin{lem}\label{blow-up} Let $S$ be a compact complex surface,  $L=L_{\a}$ a flat real line bundle corresponding to a closed $1$-form $\a$ and $\cal L = L_{\a} \otimes \bb C$ the corresponding flat holomorphic line bundle. Let $B_{x} : \hat S \to S$ be the blow-down map from the complex surface $\hat S$ obtained from $S$ by blowing up a point $x \in S$ and denote by  ${\hat \a} = B_{x}^{*}(\a)$,  $\hat L= L_{\hat \a}$ and $\hat {\cal L} = {\cal L}_{\hat \a}$ the corresponding objects on $\hat S$,  obtained by the natural pull-back map. Then,
\begin{enumerate}
\item[\rm (a)] $b_1(\hat S, \hat L)=b_1(S, L), \ b_3(\hat S, \hat L)= b_{3}(S, L), \ \ b_2(\hat S, \hat L) = b_2(S, L) + 1$;
\item[\rm (b)] $h^{k,0}(\hat S,  \hat {\cal L}) = h^{k, 0}(S, \cal L), k=0, 1,2.$
\end{enumerate}
\end{lem}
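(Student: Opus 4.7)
For part (b), I reduce via the Dolbeault isomorphism \eqref{dolbeault-isom} to showing
$${\rm dim}_{\bb C} H^0(\hat S, \Omega^p_{\hat S}\otimes \hat{\cal L}) = {\rm dim}_{\bb C} H^0(S, \Omega^p_{S}\otimes \cal L)$$
for $p=0,1,2$. Since $\hat{\cal L} = B_x^*\cal L$, the projection formula gives
$$H^0(\hat S, \Omega^p_{\hat S}\otimes \hat{\cal L})\cong H^0(S, (B_x)_*(\Omega^p_{\hat S})\otimes \cal L),$$
so it suffices to check $(B_x)_*(\Omega^p_{\hat S})=\Omega^p_S$ for $p=0,1,2$. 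For $p=0$ this is the classical identity $(B_x)_*\cal O_{\hat S}=\cal O_S$ for a point blow-up; for $p=1,2$, pull-back yields the inclusion $\Omega^p_S\hookrightarrow (B_x)_*\Omega^p_{\hat S}$, while conversely any holomorphic $p$-form on $\hat S$ restricts via $\hat S\setminus E\cong S\setminus\{x\}$ (where $E=B_x^{-1}(x)\cong\bb{CP}^1$) to a holomorphic $p$-form on $S\setminus\{x\}$, which then extends across $x$ by Hartogs' theorem for sections of the locally free coherent sheaf $\Omega^p_S$.

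The essential topological input for part (a) is that $E\cong\bb{CP}^1$ is simply connected, so the local system $\hat L$ is canonically trivialized on a tubular neighborhood $U$ of $E$ that deformation-retracts onto $E$. Interpreting $b_k(\hat S, \hat L)$ as sheaf cohomology via \eqref{deRham-isom}, the long exact sequence of the pair $(\hat S, \hat S\setminus E)$ combined with excision and the Thom isomorphism for the oriented real rank-$2$ normal bundle of $E$ gives
$$H^k(\hat S, \hat S\setminus E; \hat L)\cong H^k(U, U\setminus E; \bb R)\cong H^{k-2}(E, \bb R),$$
which equals $\bb R$ for $k=2,4$ and zero otherwise. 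An analogous excision at the point $x\in S$ yields $H^k(S, S\setminus\{x\}; L)\cong H^k(D^4, S^3; \bb R)$, nonzero only for $k=4$; in particular $H^k(S\setminus\{x\}, L)\cong H^k(S, L)$ for $k\le 2$, and the biholomorphism $\hat S\setminus E\cong S\setminus\{x\}$ matches the remaining terms of the two long exact sequences.

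Reading off the sequence for $(\hat S, \hat S\setminus E)$ then gives $b_k(\hat S, \hat L)=b_k(S, L)$ for $k=0,1$ and a short exact sequence
$0\to\bb R\to H^2(\hat S, \hat L)\to H^2(S, L)\to 0$,
contingent on the non-vanishing of the Thom class $\tau_E$ in $H^2(\hat S, \hat L)$. This is the main technical point, and it follows from the fact that the restriction of $\tau_E$ to $E$ is the Euler class of the normal bundle $\cal O_E(-1)$, which equals $-1$. Hence $b_2(\hat S, \hat L)=b_2(S, L)+1$. The identity $b_4(\hat S, \hat L)=b_4(S, L)$ follows from Poincar\'e duality \eqref{poincare} applied to the already-established $k=0$ case, and then $b_3(\hat S, \hat L)=b_3(S, L)$ is forced by the Euler characteristic formula \eqref{euler} together with $e(\hat S)=e(S)+1$. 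The main obstacle throughout is the verification that $\tau_E\neq 0$ in this \emph{twisted} cohomology, where the simple connectedness of $E$ plays the essential role by making all local computations reduce to the untwisted case.
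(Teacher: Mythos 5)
Your argument is correct, but it follows a genuinely different route from the paper's. For part (a), the paper never leaves the twisted de Rham framework: it chooses a representative $\a\in a$ vanishing identically on a ball $U$ around $x$, shows by hand (bump functions in one direction, push-forward through the biholomorphism $\hat S\setminus E\cong S\setminus\{x\}$ in the other) that $B_x^*$ is an isomorphism on $H^1_{\a}$, then gets $b_3$ from the duality \eqref{poincare} and $b_2$ from the Euler characteristic identity \eqref{euler} with $e(\hat S)=e(S)+1$; you instead pass to cohomology with local coefficients and run the long exact sequences of the pairs $(\hat S,\hat S\setminus E)$ and $(S, S\setminus\{x\})$ with excision and the Thom isomorphism, proving the key injectivity of $H^2(\hat S,\hat S\setminus E;\hat L)\to H^2(\hat S;\hat L)$ via the Euler number $-1$ of $\mathcal O_E(-1)$, so that $b_2$ is obtained directly and it is $b_3$ that is recovered from \eqref{euler} (with $b_4$ from duality). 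Your version has the merit of exhibiting geometrically where the extra class in $b_2$ comes from (the Thom class of $E$), at the cost of invoking the de Rham--Weil identification of $H^k_{d_L}$ with local-system cohomology, which \eqref{deRham-isom} alone does not quite give but which the paper itself uses later (Remark~\ref{sheaves}), and of the standard package (excision, Thom class, Euler class) for twisted coefficients; the triviality of $\hat L$ near $E$ and near $x$, which you correctly isolate via simple connectedness, is exactly what makes all of this reduce to the untwisted computation. For part (b), the paper uses the same choice of $\a$ vanishing near $x$ and extends $\bar\partial_{\a}$-closed $(k,0)$-forms across $x$ by Hartogs, whereas you argue sheaf-theoretically via the projection formula and $(B_x)_*\Omega^p_{\hat S}=\Omega^p_S$ (itself proved by Hartogs), so the two proofs of (b) are essentially the same extension argument packaged differently; note also that your duality step for $b_4$ uses the $k=0$ statement for $L^*$ rather than $L$, which is harmless since your degree-zero argument applies verbatim to any flat bundle.
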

\begin{proof} (a)  Notice that as $e(\hat S)= e(S) +1$ (see e.g. \cite{bpv}), the last equality in (a) follows from the first two and \eqref{euler}-\eqref{vanishing-0}. Also, using the duality $H^3_{\a}(S, \bb R) \cong (H^1_{-\a}(S, \bb R))^*$ (see \eqref{poincare}), it is enough to show  that for each $[\a]\in H^1_{dR}(S, \bb R)$, ${\rm dim}_{\bb R} H^1_{\a}(\hat S, \bb R)={\rm dim}_{\bb R} H^1_{\a}(S, \bb R)$. As the dimension of $H^{1}_{\a}(S, \bb R)$ does not depend on the choice of $\a \in a$, we can choose $\a$ such that it identically vanishes  on a open ball $U$ centred at $x$.  We are going to prove that the natural pull-back map $B_{x}^* :  H_{\a} ^1(S, \bb R) \to H^1_{\hat \a}(\hat S, \bb R)$ is then an isomorphism.

We shall first prove that $B_x^*$ is surjective. With our choice for $\a$, any $d_{\hat \a}$-closed $1$-form $\hat \varphi$ on $\hat S$  is closed over $\hat U= B_x^{-1}(U)$.  As $H^{1}_{dR}(\hat U, \bb R) \cong  H^1_{dR}(\mathbb{C} P^{1}, \bb R)=\{0\}$, we can  write ${\hat \varphi}_{\vert \hat U} = d({\hat \xi}_{\vert \hat U})$. Multiplying ${\hat \xi}_{\vert \hat U}$ by the pull-back via $B_x$ of a bump function centred at $x$ and supported in $U$, we can assume $\hat \xi$ is globally defined on $\hat S$ and $\hat \phi = \hat \varphi - d_{\hat \a} \hat \xi$ is another form representing $[\hat \varphi] \in H^1_{\hat \a}(\hat S)$ which vanishes identically on a neighbourhood of $E$.  Then, the diffeomorphism $(B_x^{-1}) : S\setminus \{x\} \to \hat S \setminus E$ allows us to define a smooth $1$-form $\phi = (B_x^{-1})^* (\hat \phi)$ on $S$ with $d_\a \phi =0$ and $B_x^*(\phi)= \hat \phi$. 
 
We now prove that $B_x^*: H^1_{\a}(S, \bb R) \to H^1_{\hat \a}(\hat S, \bb R)$ is injective. Suppose $\varphi$ is a $d_{\a}$-closed $1$-form on $S$,  such that $\hat \varphi =B_x^*(\varphi) = d_{\hat \a} {\hat \xi}$. As $H^1_{dR}(U, \bb R)=\{0\}$, we can modify $\varphi$ with a $d_\a$-exact $1$-form (as we did above with $\hat \varphi$) and assume without loss that $\varphi_{\vert U} \equiv 0$. It follows that the  function $\hat \xi$ satisfies $d{\hat \xi}_{\vert \hat U} \equiv 0$, i.e. $\hat \xi$ is a smooth function on $\hat S$ which is constant on $\hat U$ and, therefore, is the pull back to $\hat S$ of a smooth function $\xi$ on $S$ (which is constant on $U$). It follows that $\varphi = d_{\a} \xi$.

\smallskip
(b) Again, we assume without loss  that the closed $1$-form  $\a$  identically vanishes  on a open ball $U$ centred at $x$. Clearly, we have an injective pull-back  map $B_x^* : H^{k,0}_{\bar \partial_{\a}}(S, \bb C) \to H^{k,0}_{\bar \partial_{\hat \a}}(\hat S, \bb C)$ so we need to establish its surjectivity. Suppose $\hat \beta$ is a $(k,0)$-form on $\hat S$ satisfying  $\bar \partial_{\hat \a} \hat \beta =0$. Pulling back $\hat \beta$  by the biholomorphism $B_x^{-1}  : S\setminus \{x\} \to \hat S \setminus E$  defines a $(k,0)$-form $\beta$ on $S\setminus \{x\}$,  which satisfies $\bar \partial_{\a} \beta =0$. As $\a$ vanishes on $U$, the $(k,0)$-form $\beta$ is holomorphic on $U\setminus \{x\}$, and therefore extends over $x$ by Hartogs' extension theorem. By construction, $\hat \beta = B_x^*(\beta)$ on $\hat S \setminus E$, hence everywhere by continuity. \end{proof}

\section{Complex surfaces with $b_1=1$} 

From now on, $S$ will denote a compact complex surface whose first Betti number $b_1(S)=1$.  Kodaira~\cite{kodaira} has shown that  for such a surface either $H^0(S, K_S^m)=\{0\}$ for all $m\ge 1$, where $K_S= \O^2$ stands for the canonical bundle of $S$, or  there exists $m_0\ge 1$ such that $K^{m_0}_S \cong \cal O$ is trivial. In the first case, the surface is said to belong to  the class ${\rm VII }$ (we follow the terminology of \cite{bpv}) whereas in the latter case, Kodaira proved that the minimal model $S_0$ of $S$ must be a {\it secondary Kodaira surface}, see \cite{kodaira,bpv}.  The classification of compact complex surfaces in the class ${\rm VII}$ is still open, but the special case  when the minimal model $S_0$ of $S$ satisfies $b_2(S_0)=0$ has been settled by \cite{bogomolov, andrei, yau-et-al}: $S_0$ must then be either a Hopf surface~\cite{kato-hopf} or an Inoue--Bombieri surface~\cite{inoue}. The class  of the minimal complex surfaces $S_0 \in {\rm VII}$ for which $b_2(S_0)>0$ is commonly denoted by  ${\rm VII}_0^+$. We summarize the situation in the following
\begin{thm}\label{thm:kodaira}\cite{bogomolov,kodaira,andrei,yau-et-al} Any compact complex surface $S$ with first Betti number $b_1(S)=1$ is obtained by blowing up a minimal complex surface $S_0$ of one of the following types
\begin{enumerate}
\item[$\bullet$] a secondary Kodaira surface;
\item[$\bullet$] a Hopf surface;
\item [$\bullet$] an Inoue--Bombieri surface;
\item[$\bullet$] a minimal complex surface in the class ${\rm VII}_0^+$.
\end{enumerate}
\end{thm}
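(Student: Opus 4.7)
The statement is essentially a compilation of major classical results, so the ``proof'' amounts to assembling known pieces rather than producing new arguments. My plan is to reduce to the minimal model, apply Kodaira's pluricanonical dichotomy, and then invoke the deep classification of minimal class $\mathrm{VII}$ surfaces with $b_2=0$.

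First, I would reduce to the minimal case. Every compact complex surface admits a minimal model $S_0$ obtained by contracting $(-1)$-curves a finite number of times (see \cite{bpv}), and the first Betti number is invariant under blowing up and down. Hence $b_1(S_0)=b_1(S)=1$, and it suffices to identify $S_0$ among the listed four types.

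Next, I would invoke the dichotomy recalled at the opening of Section~3: either $H^0(S_0, K_{S_0}^m)=\{0\}$ for every $m\ge 1$, or there exists $m_0\ge 1$ with $K_{S_0}^{m_0}\cong \cal O$. The first alternative is, by definition, the class $\mathrm{VII}$ case. In the second alternative, Kodaira's structure theory of minimal surfaces with torsion canonical bundle and odd first Betti number forces $S_0$ to be a Kodaira surface; the primary Kodaira surfaces have $b_1=3$, so the constraint $b_1(S_0)=1$ singles out the secondary Kodaira surfaces, as claimed.

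Finally, within class $\mathrm{VII}$ one stratifies by $b_2(S_0)$. If $b_2(S_0)>0$, then $S_0\in \mathrm{VII}_0^+$ by definition and nothing more is to be proven. If $b_2(S_0)=0$, one needs to show that $S_0$ is biholomorphic to either a Hopf surface or an Inoue--Bombieri surface; this is exactly the deep classification theorem obtained in stages by \cite{bogomolov, andrei, yau-et-al}. This is the only substantive step of the argument, and it is certainly the main obstacle: the proof combines Bogomolov's original strategy via the existence of global spherical shells, its rectification and completion by Teleman using gauge-theoretic input from Donaldson theory, and the independent complex-geometric work of Li--Yau--Zheng. I would not attempt to reprove it, but simply cite it to close the argument.
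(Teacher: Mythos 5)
Your assembly is correct and essentially identical to the paper's own justification: the paper likewise passes to the minimal model, invokes Kodaira's pluricanonical dichotomy (with the torsion-canonical-bundle alternative forcing a secondary Kodaira surface once $b_1=1$), takes $b_2(S_0)>0$ as the definition of ${\rm VII}_0^+$, and cites \cite{bogomolov,andrei,yau-et-al} for the classification of minimal class ${\rm VII}$ surfaces with $b_2=0$ as Hopf or Inoue--Bombieri surfaces. (A cosmetic point only: Bogomolov's argument does not proceed via global spherical shells --- that is Kato's approach/conjecture for $b_2>0$ --- but since you merely cite the classification, this does not affect the argument.)
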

We shall use the following key vanishing result (see e.g. \cite[Lemma~2.13]{lcs} for a proof).
\begin{lem}\label{l:vanishing}\cite{Nakamura, lcs} Let $S$ be a compact complex surface whose minimal model belongs to the class {\rm VII}$_0^+$. Then, for any non-trivial holomorphic line bundle $\cal L \in {\rm Pic}^0(S)$
$$H^2(S, \cal L) \cong H^0(S, K_S\otimes \cal L^*)=\{0\}.$$
\end{lem}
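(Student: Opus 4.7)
The identification $H^2(S, \cal L) \cong H^0(S, K_S \otimes \cal L^*)^*$ is Serre duality for the flat holomorphic line bundle $\cal L$ on the surface $S$, via the isomorphism \eqref{dolbeault-isom}. The substance of the lemma is therefore the vanishing $H^0(S, K_S \otimes \cal L^*)=0$ for any non-trivial $\cal L \in {\rm Pic}^0(S)$.

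My first step is to reduce to the minimal case. A blow-up preserves $\pi_1$ and produces only new classes of non-zero first Chern class, so every element of ${\rm Pic}^0(S)$ is the pull-back of a unique element of ${\rm Pic}^0(S_0)$, where $S_0$ denotes the minimal model of $S$. Iterating Lemma~\ref{blow-up}(b) applied to $\cal L^*$, together with the Dolbeault identification $h^{2,0}(X, \cal M)=\dim_{\bb C} H^0(X, K_X \otimes \cal M)$, reduces the problem to the case when $S = S_0$ is itself minimal in ${\rm VII}_0^+$.

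Assuming $S$ minimal in ${\rm VII}_0^+$, I argue by contradiction: suppose $0\ne s \in H^0(S, K_S \otimes \cal L^*)$ for a non-trivial $\cal L \in {\rm Pic}^0(S)$. Then $D:=(s)\ge 0$ satisfies $\cal O(D) \cong K_S \otimes \cal L^*$, so $[D]=c_1(K_S)$ in $H^2(S, \bb Z)$. Since $c_1(K_S)^2 = 2e(S)+3\sigma(S) = -b_2(S)<0$ on a minimal ${\rm VII}_0^+$ surface, the class $[D]$ is non-zero, whence $D>0$. The intersection form on $H^{1,1}_{\bb R}(S)$ is negative definite (as $b^+(S)=0$ and $h^{2,0}(S)=0$), so each irreducible component $C_i$ of $D$ has $C_i^2<0$; the adjunction formula $C_i^2+K_S\cdot C_i=2g_i-2$ then imposes tight numerical constraints on the components and their configuration.

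The principal difficulty is to convert these numerical constraints into an actual contradiction, i.e.\ to rule out any non-trivial twist by $\cal L\in {\rm Pic}^0(S)$ of an effective representative of $c_1(K_S)$. Following Nakamura's approach, one combines the constraints above with the structure theory of curves on minimal ${\rm VII}_0^+$ surfaces and with the Gauduchon degree: for any Gauduchon metric $g$, one has $\deg_g \cal O(D) = \int_D \omega_g > 0$, which rewrites as $\deg_g \cal L <\deg_g K_S$. Exploiting the one-dimensional nature of ${\rm Pic}^0(S)$ and the behavior of $\deg_g$ on it, one ultimately forces $\cal L$ to be trivial, producing the desired contradiction. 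Carrying out this last step is the main geometric input of the proof and the chief obstacle that goes beyond formal cohomological manipulations.
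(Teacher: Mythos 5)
Your reductions are fine and match what the cited source does: Serre duality turns the statement into the vanishing of $H^0(S, K_S\otimes \cal L^*)$, and Lemma~\ref{blow-up}(b) (applied to $\cal L^*$, using ${\rm Pic}^0(\hat S)\cong {\rm Pic}^0(S)$) reduces to the minimal case. But at that point your argument stops exactly where the actual content of the lemma begins. The paper does not reprove this step either -- it quotes it from \cite{Nakamura} and \cite[Lemma~2.13]{lcs} -- and your proposal likewise defers to ``Nakamura's approach'' while explicitly acknowledging that converting the numerical constraints into a contradiction is ``the chief obstacle.'' An admitted unproved key step is a genuine gap: the numerics you do establish ($c_1(K_S)^2=-b_2(S)<0$, negative definiteness, adjunction) only say that a hypothetical effective divisor in $|K_S\otimes\cal L^*|$ is a configuration of curves of negative self-intersection, which is perfectly consistent with what actually happens on Kato-type surfaces; ruling it out requires Nakamura's structure theory of curves on minimal ${\rm VII}_0^+$ surfaces, which you do not supply.

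Moreover, the specific mechanism you sketch for the missing step is not viable as stated. First, a single strict inequality $\deg_g\cal L<\deg_g K_S$ on the one-dimensional group ${\rm Pic}^0(S)$ cannot ``force $\cal L$ to be trivial''; it only bounds $\deg_g\cal L$ from above, and since $K_S$ is not flat its Gauduchon degree is metric-dependent, so no uniform contradiction can be extracted this way. Second, the strategy of deriving a contradiction from the non-triviality of $\cal L$ is conceptually off target: the vanishing $H^0(S,K_S\otimes\cal L^*)=\{0\}$ also holds for $\cal L=\cal O$ (since $b_1(S)$ is odd, $h^{2,0}(S)=0$), so the true obstruction is not the non-triviality of the flat twist but the geometry of curves on minimal class ${\rm VII}_0^+$ surfaces; indeed, on Enoki surfaces non-trivial flat bundles \emph{do} admit sections (Theorem~\ref{thm:enoki}), so flatness plus a degree count alone cannot preclude effective divisors. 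To complete the proof you would need to either reproduce Nakamura's argument or cite it as the paper does, rather than leave it as a named obstacle.
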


\section{The space of Lee classes}

\subsection{A characterization  of the case when $\mathcal T(S)$ is a single point} In this section we are going to establish the following  result.

\begin{thm} \label{thm:main} Let $S$ be a compact complex surface with $b_1(S)=1$ and  $\mathcal{T}(S)\subset H^1_{dR}(S, \bb R)$ the set of Lee classes of LCS forms taming the complex structure on $S$. Then the following conditions are equivalent.
\begin{enumerate}
\item $\exists \ 0\neq a\in H^1_{dR}(S, \bb R)$ and $\a \in a$ such that 
$d_{-\a}d^c_{-\a} (1) = dJ\a + \a \wedge J\a=0$;
\item $\mathcal{T}(S)= \{ a \};$
\item $\mathcal{T}(S) \subset H^1_{dR}(S, \bb R)$ is not open;
\item $\exists \ a \in \mathcal{T}(S)$ such that $H_{d_{L_a}}^1(S, L_{a}) \neq \{0\}.$
\end{enumerate}
\end{thm}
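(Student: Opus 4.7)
The plan is to establish the cycle $(1)\Rightarrow(2)\Rightarrow(3)\Rightarrow(4)\Rightarrow(1)$.

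For the first implication, Theorem~\ref{thm:lcs} guarantees $\mathcal{T}(S)\ne\emptyset$; pick $\omega'\in\mathcal{T}(S)$ with Lee form $\b$. Rewriting (1) as $dJ\a=-\a\w J\a$, a direct computation (using that the $2$-form $\omega'$ commutes with the $1$-form $\a$) yields
\[
d(\omega'\w J\a) = \b\w\omega'\w J\a - \omega'\w\a\w J\a = (\b-\a)\w\omega'\w J\a.
\]
Since $b_1(S)=1$, one has $[\b]=c\,a$ for some $c\in\bb R$, and a conformal rescaling $\omega'\mapsto e^h\omega'$ (which preserves $\mathcal T(S)$) allows one to assume $\b=c\a$. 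Stokes then yields $(c-1)\int_S\omega'^{1,1}\w\a\w J\a=0$; the positivity of the $(1,1)$-form $\a\w J\a=2i\,\a^{1,0}\w\a^{0,1}$ where $\a\neq 0$, together with $\omega'^{1,1}>0$, forces $c=1$. Thus every $\omega'\in\mathcal{T}(S)$ has Lee class $a$, giving (2).

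The step $(2)\Rightarrow(3)$ is immediate: $H^1_{dR}(S,\bb R)\cong\bb R$ as $b_1(S)=1$, so a single point is not open. For $(3)\Rightarrow(4)$, I argue the contrapositive. Assume $H^1_{d_{L_a}}(S,L_a)=0$ for every $a\in\mathcal T(S)$; Poincar\'e duality \eqref{poincare} then gives $H^3_\a(S,\bb R)\cong H^1_{-\a}(S,\bb R)^*=0$ for any $\a\in a$. Given $\omega\in\mathcal T(S)$ with Lee form $\a$ and any closed $1$-form $\xi$, the $3$-form $\xi\w\omega$ is $d_\a$-closed by a direct check, hence $d_\a$-exact, so the first-order deformation equation $d_\a\omega_1=\xi\w\omega$ is solvable. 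All higher-order obstructions likewise lie in the vanishing group $H^3_\a$; an implicit-function/Kuranishi-type argument adapting Goto~\cite{G} to the LCS setting (in the spirit of~\cite{lcs}) then produces an actual LCS taming form with Lee class $a+t[\xi]$ for small $t$, showing that $\mathcal T(S)$ is open at $a$ and contradicting (3).

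For the final step $(4)\Rightarrow(1)$, take $a\in\mathcal T(S)$ with $H^1_{d_{L_a}}(S,L_a)\ne 0$, and pick $\omega\in\mathcal T(S)$ with Lee form $\a\in a$. After a conformal rescaling, arrange that $\omega^{1,1}$ is the Gauduchon representative of its Hermitian conformal class, with associated metric $g$. Hodge theory for the twisted Laplacian $\D^g_{-\a}$ yields a non-trivial $d_{-\a}$-harmonic real $1$-form $\xi$; its $(0,1)$-component is $\bpart_{-\a}$-harmonic and defines a non-zero class in $H^{0,1}(S,\mathcal L_a^*)\cong H^1(S,\mathcal L_a^*)$. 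The crux is to show that, up to a constant multiple and a further adjustment of the conformal gauge of $\omega$ within its LCS class, the harmonic form $\xi$ equals $J\a$; once this is established, $d_{-\a}\xi=0$ is precisely the condition $dJ\a+\a\w J\a=0$ of (1). This last identification is the \emph{main obstacle}: it combines Gauduchon's theorem on compact Hermitian surfaces, the type decomposition of twisted harmonic $1$-forms (more subtle than in the K\"ahler case), and the classification/vanishing input for $b_1(S)=1$ surfaces provided by Theorem~\ref{thm:kodaira} and Lemma~\ref{l:vanishing}.
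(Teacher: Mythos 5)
Your implications (1)$\Rightarrow$(2), (2)$\Rightarrow$(3) and (3)$\Rightarrow$(4) track the paper's argument: the Stokes computation with $\omega'\w J\a$ is the same integration by parts the paper performs (after writing the Lee form as $t\a$, $t\neq 0$, using $b_1(S)=1$ and a conformal gauge), and your deformation argument for (3)$\Rightarrow$(4) is exactly Proposition~\ref{prop:open} together with the duality \eqref{poincare}; the convergence you delegate to an ``implicit-function/Kuranishi-type argument'' is carried out in the paper by an explicit iteration $\omega_{i+1}=d^*_{\a}\,{\mathbb G}_{\a}(\b\w\omega_i)$ with Schauder estimates, so that part is a sketch of the correct argument rather than a different one.

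The genuine gap is (4)$\Rightarrow$(1), where you yourself flag the identification of a twisted harmonic $1$-form with $J\a$ as the ``main obstacle'': that step is precisely the content of the theorem, and as set up it starts from a false premise in the only hard case. If the minimal model of $S$ is of class ${\rm VII}_0^+$, then Lemma~\ref{l:vanishing}, Riemann--Roch and $H^0(S,\cal L_{\a})=\{0\}$ give $H^{0,1}_{\bar\part_{-\a}}(S,\bb C)\cong H^1(S,\cal L_{\a})=\{0\}$, so the $(0,1)$-part of a non-trivial $d_{-\a}$-closed $1$-form $\b$ is $\bar\part_{-\a}$-\emph{exact}, not a non-zero Dolbeault class as you claim; moreover the twisted K\"ahler-type identity you implicitly invoke (that the $(0,1)$-part of a $\Delta^g_{-\a}$-harmonic form is $\bar\part_{-\a}$-harmonic) has no justification on these non-K\"ahler surfaces. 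The paper exploits exactly this exactness: write $\b=d_{-\a}h+d^c_{-\a}f$ with $f\not\equiv 0$, deduce $d_{-\a}d^c_{-\a}f=0$ as in \eqref{PSH}, and then--this is the analytic heart missing from your plan--contract with the $(1,1)$-part $F$ of a taming LCS form to obtain a second-order elliptic equation $M(f)=0$ whose formal adjoint has only constants in its kernel thanks to the identity \eqref{l:lcs} (this is where the hypothesis $a\in\mathcal T(S)$, i.e.\ taming, is actually used); the principal-eigenvalue/Hopf maximum principle argument then shows $f$ is nowhere vanishing, and $\tilde\a=\a+d\log|f|$ satisfies (1). Nothing in your sketch supplies this nowhere-vanishing step. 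Finally, the implication requires the case analysis over Theorem~\ref{thm:kodaira}, which your single harmonic-theoretic argument cannot see: when the minimal model is a Hopf or secondary Kodaira surface one must show (4) is vacuous, via Vaisman metrics and the vanishing result of \cite{leon} (Lemma~\ref{l:hopf}, using Lemmas~\ref{cover} and \ref{blow-up}), and for blow-ups of Inoue--Bombieri surfaces condition (1) is verified directly from the explicit form $-i\,dw/w_2$ (Lemma~\ref{l:inoue}); these inputs appear in your proposal only as a vague reference inside the unresolved step.
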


We  first need a variation  of a result in \cite{G}:

\begin{prop}\label{prop:open} Let $M$ be a compact  $2n$-dimensional manifold  which admits  an LCS structure with Lee  class $a\neq 0 \in H^1_{dR}(M, \bb R)$. Let $L=L_{a}$ be the corresponding flat real  line bundle and $L^*$ its dual.   If $H_{d_{L^*}}^3(M, L^*) =\{0\}$, then for any $b\in H^1_{dR}(M, \bb R)$ there exists $\varepsilon >0$ such that for $|t|<\varepsilon$,  $M$ admits an LCS structure with Lee class  in $(a+tb) \in  H^1_{dR}(M, \bb R)$. The statement holds true for the  Lee classes of  LCS structures  taming a given  almost complex structure $J$ on $M$. 
\end{prop}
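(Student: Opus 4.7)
The plan is to set up a fixed-point equation for a deformation $\omega_t=\omega_0+\eta_t$ of the given LCS form $\omega_0$ and to solve it by a Banach iteration whose convergence rests on the vanishing hypothesis $H_{d_{L^*}}^3(M,L^*)=\{0\}$. I fix closed representatives $\alpha_0\in a$ (the Lee form of $\omega_0$) and $\beta\in b$, set $\alpha_t:=\alpha_0+t\beta$, which is closed and represents $a+tb$, and look for $\omega_t=\omega_0+\eta_t$ satisfying $d_{\alpha_t}\omega_t=0$. Cancelling $d_{\alpha_0}\omega_0=0$, this becomes the non-linear equation $d_{\alpha_0}\eta_t=t\,\beta\wedge(\omega_0+\eta_t)$.

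The central observation is that the right-hand side of this equation will be $d_{\alpha_0}$-closed along every iterate produced below. Indeed, since $d\beta=0$, one has the product rule $d_{\alpha_0}(\beta\wedge\mu)=-\beta\wedge d_{\alpha_0}\mu$ for any $2$-form $\mu$; combined with $\beta\wedge\beta=0$, this will keep each iterate in the correct subspace. To exploit the vanishing hypothesis, I fix a Riemannian metric $g$ on $M$ and invoke the Hodge theory of the elliptic twisted Laplacian $\Delta_{\alpha_0}$ recalled in the preliminaries; via the isomorphism $H^3_{\alpha_0}(M,\bb R)\cong H_{d_{L^*}}^3(M,L^*)=\{0\}$, the operator $R:=d_{\alpha_0}^*G_{\alpha_0}$ (with $G_{\alpha_0}$ the Green operator of $\Delta_{\alpha_0}$) is a bounded right inverse of $d_{\alpha_0}$ on the space of $d_{\alpha_0}$-closed $3$-forms, in any Sobolev completion $H^s(\Lambda^\bullet)$ with $s$ large.

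I then define $\eta^{(0)}:=0$ and $\eta^{(n+1)}:=R\bigl(t\beta\wedge(\omega_0+\eta^{(n)})\bigr)$. An induction using $d_{\alpha_0}\eta^{(n+1)}=t\beta\wedge(\omega_0+\eta^{(n)})$ and the identities above yields $d_{\alpha_0}\bigl(t\beta\wedge(\omega_0+\eta^{(n+1)})\bigr)=-t^2\,\beta\wedge\beta\wedge(\omega_0+\eta^{(n)})=0$, so the iteration is consistent. The iteration map has Lipschitz constant bounded by $|t|\,\|R\|\,\|\beta\|_{C^0}$, so for $|t|<\varepsilon$ small it contracts a small ball in $H^s$; the Banach fixed-point theorem then produces a unique small $\eta_t$, smooth by elliptic bootstrap, and $\omega_t:=\omega_0+\eta_t$ is the sought LCS form with Lee class $a+tb$. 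The non-degeneracy of $\omega_t$, as well as the taming condition $\omega_t^{1,1}>0$ relative to a given almost complex structure $J$ whenever it holds for $\omega_0$, follow for small $t$ by openness.

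The main obstacle is that the natural linearization $d_{\alpha_0}:\mathcal E^2(M,\bb R)\to\mathcal E^3(M,\bb R)$ of $F(\eta,t):=d_{\alpha_t}(\omega_0+\eta)$ at $(0,0)$ is \emph{not} surjective onto $\mathcal E^3(M,\bb R)$, so the standard inverse function theorem does not apply directly to $F$. The resolution is precisely the ``integrability'' identity $\beta\wedge\beta=0$ combined with $d\beta=0$: it forces the non-linearity to land inside the space of $d_{\alpha_0}$-closed $3$-forms, which under the cohomological hypothesis coincides with $d_{\alpha_0}\bigl(\mathcal E^2(M,\bb R)\bigr)$, on which $R$ is a genuine bounded right inverse. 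Once this subtlety is handled, what remains is a routine contraction-mapping estimate.
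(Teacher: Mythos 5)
Your proposal is correct and follows essentially the same route as the paper: the same right inverse $R=d^*_{\alpha_0}\mathbb{G}_{\alpha_0}$ furnished by Hodge theory and the vanishing $H^3_{\alpha_0}(M,\bb R)\cong H^3_{d_{L^*}}(M,L^*)=\{0\}$, the same observation that $d\beta=0$ and $\beta\wedge\beta=0$ keep the right-hand side $d_{\alpha_0}$-closed, and the same openness remark for non-degeneracy and taming. The only difference is packaging: your contraction/Neumann-series fixed point is exactly the paper's power series $\omega(t)=\sum_i t^i\bigl(R(\beta\wedge\cdot)\bigr)^i\omega_0$, with convergence and smoothness handled by the Banach fixed-point theorem and bootstrap instead of Schauder estimates and the auxiliary elliptic PDE.
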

\begin{proof}
Let $\a, \b$ be representatives of the de Rham classes $a$ and $b$,  respectively, and $\omega_0$ a smooth $2$-form on $M$ satisfying $d_{\a} \omega_0 =0$ and  $\omega_0^n \neq 0$. As the latter condition is open, it is enough to construct a $\mathcal{C}^{\infty}$ family $\omega(t)$ of $2$-forms,  satisfying $d_{(\a+t\b)} \omega(t)=0$ for $|t|<\varepsilon$. (The case when $\omega_0$ tames $J$ is handled similarly, by noting that $\omega^{1,1}>0$ is an open condition.) To this end, we take $\omega(t)$ be a formal power series
$$\omega(t) = \sum_{i=0}^{\infty} \omega_i t^i,$$
where $\omega_i$ are smooth $2$-forms on $M$ (and $\omega_0$ is the $2$-form chosen above). Using $d_\a \omega_0 =0$, the condition $d_{(\a + t\b)}\big(\omega(t)\big)=0$ reads as
\begin{equation}\label{induction}
d_\a \omega_{i+1} =\b \wedge \omega_i,  \ \ i= 0, \ldots
\end{equation}
which can be used to build $\omega_i$ by induction: Indeed, the right hand side is $d_{\a}$-closed as 
$$d_{\a}(\beta \wedge \omega_i)=- \beta\wedge (d_{\a} \omega_i)= -\b \wedge (\b\wedge\omega_{i-1})=0.$$
As $H^3_{\a}(M, \bb R) \cong H_{d_{L^*}}^3(M,L^*)=\{0\}$ by the hypothesis,   one can solve \eqref{induction} inductively as follows. Let $g$ be a Riemannian metric on $M$ and  $d^*_{\a}=- * d_{-\a}*$ the formal $L_2$-adjoint of $d_{\a}$  (with respect to $g$) acting on $p$-forms. The vanishing of $H^3_{\a}(M, \bb R)$ ensures that the Laplacian $\Delta^g_{\a}= d_\a d^*_\a + d^*_\a d_\a$ is invertible on $\mathcal{E}^3(M, \bb R)$,  with inverse $\mathbb{G}_{\a}$. Note that, as $\b\wedge \omega_i$ is a $d_\a$-closed $3$-form, $d_\a\Big(\mathbb{G}_{\a}(\b\wedge \omega_i)\Big)=0$. Letting
$$\omega_{i+1} := d_\a^*({\mathbb G}_\a(\b\wedge \omega_i),$$
we have
\begin{equation*}
\begin{split}
d_\a \omega_{i+1} &= \Delta^g_\a \Big({\mathbb G}_\a(\b\wedge \omega_i)\Big)- d^*_\a d_\a\Big({\mathbb G}_\a(\b\wedge \omega_i)\Big)\\
                                  &= \b\wedge \omega_i.
\end{split}
\end{equation*}
The convergence of the series in $\mathcal{C}^{k}(M)$ follows by  standard Schauder estimates for the Laplacian, whereas the smoothness  of the solutions follows from standard regularity theory for elliptic PDE's. Indeed, notice that  $\omega(t)$ satisfies the PDE 
$$(d^*_\a d_{\a+ t\b} + d_{\a + t\b}d^*_\a) (\omega(t)) = d_{\a + t\b}d^*_\a \omega_0,$$
in which the rhs is $\mathcal{C}^{\infty}$. The above PDE is elliptic for $t$ small enough (as it is so for $t=0$).\end{proof}

\smallskip
\noindent
{\it Proof of Theorem~\ref{thm:main}}. `(1) $\Rightarrow$ (2)':  By Theorem~\ref{thm:lcs}, $\mathcal{T}(S)\neq \emptyset$. Let $\tilde a \in \mathcal{T}(S)$.  As $b_1(S)=1$,  $\tilde a$ can be represented by the closed $1$-form $t\alpha$ for some real constant $t\neq 0$. Let $\omega$ be a $d_{t\alpha}$-closed $2$-form with $\omega^{1,1}>0$. Using that $d_{-\a}d^c_{-\a} (1) =0$, we have 
$$d_{-t\alpha} d^c_{-t\alpha} (1) = tdJ\alpha  + t^2 \alpha \wedge J\alpha  =  t(t-1)\a \wedge J\a.$$
It follows that
\begin{equation*}
\begin{split}
0 =  \int_M d^c_{-t\a} (1) \wedge d_{t\a} \omega  &= \int_M d_{-t\a} d^c_{-t\a} (1) \wedge \omega  \\
                                                                          &= t(t-1) \int_M \a\wedge J\a \wedge \omega \\
                                                                          &= t(t-1) \int_M \a\wedge J\a \wedge \omega^{1,1}.
\end{split}
\end{equation*}
As $\int_M \a\wedge J\a \wedge \omega^{1,1}>0$, it follows that $t=1$, i.e. $\mathcal{T}(S)= \{[\a]\}$.
\smallskip

`(2) $\Rightarrow$ (3)'  is obvious.

\smallskip

`(3) $\Rightarrow$ (4)'  follows from Proposition~\ref{prop:open} and the facts that $b_1(S)=1$ and $H_{d_{L^*}}^3(S, L^*) \cong H_{d_L}^1(S, L)$, see \eqref{poincare}.

\smallskip

`(4)  $\Rightarrow$ (1)': We shall consider four cases, according to the type of the minimal model $S_0$ in Theorem~\ref{thm:kodaira}

\smallskip
\noindent {\it Cases 1 and 2}: $S_0$ is either a secondary Kodaira surface or a  Hopf surface. These cases are impossible because of the following 
\begin{lem}\label{l:hopf} If the minimal model of $S$ is a secondary Kodaira surface or a  Hopf surface, then for any $a\neq 0 \in H^1_{dR}(S, \bb R)$, $H^1_{d_{L_{a}}}(S, L_a)=\{0\}$.
\end{lem}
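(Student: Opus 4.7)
The plan is to reduce the problem to two concrete model cases by successive use of Lemmas~\ref{blow-up} and~\ref{cover}, and then compute the twisted cohomology directly on the resulting well-understood covers. By Lemma~\ref{blow-up}(a) we have $b_1(\hat S, \hat L) = b_1(S, L)$ under a blow-up, so we may replace $S$ by its minimal model and assume $S = S_0$ is either a secondary Kodaira surface or a Hopf surface. Every secondary Kodaira surface is finitely covered by a primary Kodaira surface, and every Hopf surface is finitely covered by a primary Hopf surface $\tilde S = (\mathbb{C}^2 \setminus \{0\})/\langle \gamma \rangle$. Lemma~\ref{cover} then reduces the problem to showing $H^1_{d_L}(\tilde S, L) = 0$ on such a primary cover, with the pulled-back class still non-zero in $H^1_{dR}(\tilde S, \mathbb{R})$.

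In the primary Hopf case, the universal cover $\mathbb{C}^2 \setminus \{0\}$ is simply connected and homotopy equivalent to $S^3$, so $H^q_{dR}(\mathbb{C}^2 \setminus \{0\}, \mathbb{R})$ is $\mathbb{R}$ for $q \in \{0, 3\}$ and zero for $q \in \{1, 2\}$. The Cartan--Leray spectral sequence for the $\mathbb{Z}$-cover reads
\begin{equation*}
E_2^{p,q} = H^p\bigl(\mathbb{Z},\, H^q(\mathbb{C}^2 \setminus \{0\}, \mathbb{R}) \otimes \mathbb{R}_\chi\bigr) \Longrightarrow H^{p+q}(\tilde S, L),
\end{equation*}
where $\mathbb{R}_\chi$ denotes the $1$-dimensional real representation of $\pi_1(\tilde S) = \mathbb{Z}$ corresponding to $L$, with $\chi : \mathbb{Z} \to \mathbb{R}^*_+$ non-trivial. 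Since $1 - \chi(\gamma)$ is invertible on $\mathbb{R}_\chi$, both $H^0(\mathbb{Z}, \mathbb{R}_\chi)$ and $H^1(\mathbb{Z}, \mathbb{R}_\chi)$ vanish, so $E_2 \equiv 0$; in particular $H^1(\tilde S, L) = 0$.

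In the primary Kodaira case, $\tilde S = \Gamma \backslash G$ is a compact nilmanifold for a simply connected $2$-step nilpotent Lie group $G$ of real dimension four. By an extension of Nomizu's theorem to twisted coefficients in the nilpotent setting, the inclusion of left-invariant $L$-valued forms into all smooth ones is a quasi-isomorphism, so $H^*_{d_L}(\tilde S, L) \cong H^*(\mathfrak{g}, \mathbb{R}_\chi)$, where $\chi: \mathfrak{g} \to \mathbb{R}$ is the Lie algebra character representing the left-invariant representative of $[\a]$. Since $\mathfrak{g}$ is nilpotent, $\chi$ factors through the abelianisation; a Hochschild--Serre argument applied to a codimension-one ideal $\mathfrak{h} \supset [\mathfrak{g},\mathfrak{g}]$ annihilated by $\chi$ then yields the vanishing, because the action of $\mathfrak{g}/\mathfrak{h} \cong \mathbb{R}$ on $H^*(\mathfrak{h}, \mathbb{R}_\chi)$ has all eigenvalues shifted by the non-zero scalar $\chi$ (the adjoint part being nilpotent), hence is invertible on both invariants and coinvariants. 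This is Dixmier's classical vanishing result for Lie algebra cohomology with non-trivial characters.

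The main technical point is the twisted Nomizu theorem required for the Kodaira case: one must verify that the inclusion of left-invariant $d_\alpha$-forms into all $d_\alpha$-forms on $\Gamma \backslash G$ induces an isomorphism on the Lichnerowicz--Novikov complex, which is not an immediate consequence of the untwisted Nomizu theorem and would need to be invoked from the literature on nilmanifold cohomology. The Hopf case, by contrast, is essentially topological once the universal cover is identified at the homotopy level with $S^3$.
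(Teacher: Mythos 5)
Your argument is correct in substance but takes a genuinely different route from the paper's. The paper makes the same two reductions (to the minimal model via Lemma~\ref{blow-up}, and to a primary Hopf surface via Lemma~\ref{cover} in the secondary Hopf case), but then treats both cases uniformly through Vaisman geometry: Belgun's Vaisman locally conformally K\"ahler metrics on secondary Kodaira surfaces \cite{B} and Gauduchon--Ornea's on primary Hopf surfaces \cite{GO}, combined with the vanishing theorem of de L\'eon--L\'opez--Marrero--Padr\'on \cite{leon} for $H^1_{t\a_0}$ when the Lee form $\a_0$ is parallel, with $b_1=1$ used to cover every nonzero class as some $t\a_0$. You instead compute directly on covers: for Hopf surfaces a purely topological Cartan--Leray argument over the universal cover $\bb C^2\setminus\{0\}\simeq S^3$, which is airtight for the group you need since only $E_2^{1,0}$ and $E_2^{0,1}$ contribute to total degree one and both vanish because $1-\chi(\gamma)\neq 0$; and for Kodaira surfaces a nilmanifold computation combining a twisted Nomizu theorem with Dixmier's vanishing for nilpotent Lie algebra cohomology with nontrivial character. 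The one step to firm up is the quasi-isomorphism between left-invariant and arbitrary $L$-valued forms on the primary Kodaira nilmanifold: as you say, it is not a formal consequence of Nomizu, but it is available --- it is a special case of Hattori's theorem for completely solvable Lie groups (the holonomy character extends to $G$ precisely because, by Nomizu, the class has a closed left-invariant representative), and the rank-one local system case on nilmanifolds is also treated by Millionshchikov and Alaniya --- so with a precise citation there your proof closes. In comparison, the paper's route is shorter given \cite{B}, \cite{GO}, \cite{leon} and never needs the primary Kodaira cover (Belgun's result applies directly to the secondary surface), while yours is more elementary and purely topological in the Hopf case and trades the analytic input \cite{leon} for algebraic ones (Hattori plus Dixmier), at the cost of the extra covering step and the twisted-Nomizu reference.
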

\begin{proof} Belgun~\cite{B} has shown that any secondary Kodaira surface $S_0$ admits a Vaisman locally conformally K\"ahler structure $\omega_0$, i.e. an LCS structure $\omega_0$ with $\omega_0 = \omega_0^{1,1}>0$ and such that the Lee form $\a_0$ is parallel with respect to $g_0(\cdot, \cdot)=\omega_0(\cdot, J\cdot).$  By \cite{leon}, $H^1_{t\a_0}(S_0, \bb R)=\{0\}$ for any $t\neq 0$.  As $b_1(S_0)=1$, it follows that  for any $\check{a} \neq 0 \in H^1_{dR}(S_0, \bb R)$, $H^1_{d_{L_{\check{a}}}}(S_0, L_{\check{a}})=\{0\}$. As the blow-down map  $b: S\to S_0$ induces an  isomorphism between $H^1_{dR}(S_0, \bb R)$  and $H^1_{dR}(S, \bb R)$, any non-zero class $a \in H_{dR}^1(S, \bb R)$ is the pull-back of  a class $\check{a} \neq 0\in H^1_{dR}(S_0, \bb R)$, thus by Lemma~\ref{blow-up},  $H^1_{d_{L_{a}}}(S, L_{a})=\{0\}$. 

Hopf surfaces are classified in \cite{kato-hopf}, and they are either primary  or secondary. Gauduchon--Ornea~\cite{GO}  showed that the primary Hopf surfaces $S_0$ admit Vaisman locally conformally metrics, so that the same argument as in the case of a secondary Kodaira surface proves the claim. If $S_0$ is a secondary Hopf surface, it is covered by a primary one, $\hat S_0$ say.  Using Lemma~\ref{cover} and Lemma~\ref{blow-up},  we conclude again that $H^1_{d_{L_a}}(S, L_a)=\{0\}$ for each non-trivial flat real line bundle $L=L_{a}$.  \end{proof}

\smallskip
\noindent
{\it Case 3}: $S_0$ is an Inoue--Bombieri surface. In this case our  claim   follows from 
\begin{lem}\label{l:inoue} If the minimal model of $S$ is an Inoue--Bombieri surface,  then $\mathcal{T}(S)=\{a\}$, $H^1_{d_{L_a}}(S, L_{a}) \neq 0$ and there exits $\a \in a$ satisfying the identity in Theorem~\ref{thm:main}~{\rm (1)}.
\end{lem}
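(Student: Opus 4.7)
The plan is to establish the lemma first on the minimal Inoue--Bombieri model $S_0$ by a direct computation on its universal cover $\mathbb{H}\times\mathbb{C}$, and then transfer the conclusion to an arbitrary blow-up $S$ via Lemma~\ref{blow-up} together with the implications of Theorem~\ref{thm:main} already proved above.

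Recall from~\cite{inoue} that every Inoue--Bombieri surface (of type $S_M$, $S^+$ or $S^-$) is a compact quotient $(\mathbb{H}\times\mathbb{C})/\Gamma$ by a solvable group $\Gamma$ of biholomorphic transformations of the form $(w_1,w_2)\mapsto(aw_1+b,\,cw_2+dw_1+e)$ with $a>0$, so $\Gamma$ acts on $v_1:=\operatorname{Im} w_1$ by a pure positive dilation. Consequently the closed $1$-form $\tilde{\alpha}_0:=\frac{dv_1}{v_1}$ is $\Gamma$-invariant and descends to a closed $1$-form $\alpha_0$ on $S_0$; since some generator of $\Gamma$ rescales $v_1$ by a factor different from $1$, the function $\log v_1$ is not $\Gamma$-invariant, $\alpha_0$ is non-exact, and $a_0:=[\alpha_0]$ generates $H^1_{dR}(S_0,\mathbb{R})\cong\mathbb{R}$. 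With the standard complex structure on $\mathbb{H}$, $J\tilde{\alpha}_0$ is proportional to $\frac{du_1}{v_1}$, and a one-line check on $\mathbb{H}\times\mathbb{C}$ gives
$$dJ\tilde{\alpha}_0+\tilde{\alpha}_0\wedge J\tilde{\alpha}_0=0,$$
an identity which descends to $S_0$.

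Now let $\pi:S\to S_0$ be the composition of blow-downs to the minimal model, set $\alpha:=\pi^*\alpha_0$ and $a:=[\alpha]\in H^1_{dR}(S,\mathbb{R})$. By iterated application of Lemma~\ref{blow-up}(a), $a\neq 0$ spans $H^1_{dR}(S,\mathbb{R})$, and the identity $dJ\alpha+\alpha\wedge J\alpha=0$ persists under pull-back by the holomorphic map $\pi$. Hence $\alpha$ satisfies condition~(1) of Theorem~\ref{thm:main}, and the implication (1)$\Rightarrow$(2) (already established in the present proof) yields $\mathcal{T}(S)=\{a\}$. Since $\mathcal{T}(S)$ is therefore a single point inside the one-dimensional space $H^1_{dR}(S,\mathbb{R})$ and in particular not open, the contrapositive of Proposition~\ref{prop:open} (applied via~\eqref{poincare} to the dual flat bundle) forces $H^1_{d_{L_a}}(S,L_a)\neq 0$. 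One can also see this directly: $J\alpha$ is $d_{-\alpha}$-closed by the identity, and if it were $d_{-\alpha}$-exact, say $J\alpha=df+f\alpha$, then Stokes combined with $d_\alpha\omega=0$ for any $\omega\in\mathcal{T}(S)$ would yield $\int_S\alpha\wedge J\alpha\wedge\omega=\int_S\alpha\wedge df\wedge\omega=0$, contradicting the positivity $\int_S\alpha\wedge J\alpha\wedge\omega^{1,1}>0$.

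The main obstacle, and really the only case-specific part of the argument, is to confirm that Inoue's explicit generators of $\Gamma$ in each of the three families act on $v_1$ by a pure positive dilation, so that $\tilde{\alpha}_0=\frac{dv_1}{v_1}$ is genuinely $\Gamma$-invariant; this is classical but has to be read off from the presentations in~\cite{inoue}.
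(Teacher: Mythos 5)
Your proposal is correct and follows essentially the same route as the paper: you exhibit the explicit deck-invariant form $dv_1/v_1$ on $\mathbb{H}\times\mathbb{C}$ (the paper's $dw_2/w_2$) satisfying $dJ\alpha+\alpha\wedge J\alpha=0$, pull it back to the blow-up, and then combine the implication (1)$\Rightarrow$(2) of Theorem~\ref{thm:main} with the contrapositive of Proposition~\ref{prop:open} and the duality \eqref{poincare}. The only (harmless) deviations are that you establish $[\alpha_0]\neq 0$ via the non-invariance of $\log v_1$ under the deck group instead of the paper's Gauduchon-metric/maximum-principle argument, and you add a direct Stokes-type argument for $H^1_{d_{L_a}}(S,L_a)\neq 0$, which the paper only mentions as an alternative to the duality argument.
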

\begin{proof} The Inoue--Bombieri surfaces $S_0$ are classified in \cite{inoue} and appear as three types of quotients of  $\bb H \times \bb C$,  where $$\bb H=\{w=w_1 + i w_2, : w_2>0\}$$ denotes the upper half-plane.  An inspection  upon the explicit forms of the desk transformations on $\bb H \times \bb C$ (see \cite{inoue}) shows that in each case the $(1, 0)$-form $-i dw/w_2$  is invariant and therefore descends to $S_0$. Let $\alpha:= dw_2/w_2 = {\rm Re}(-idw/w_2)$ denote the corresponding real $1$-form on $S_0$. We thus have that $J\alpha= {\rm Im}(-idw/w_2)= -dw_1/w_2$ satisfies 
\begin{equation}\label{key-identity}
dJ\alpha =  -dw_1\wedge dw_2/w_2^2 = - \alpha \wedge J\alpha.
\end{equation}
Pulling back $\alpha$ from $S_0$  to $S$ by the blow-down map, we have a non-zero $1$-form  $\a$ on $S$ satisfying \eqref{key-identity}.  Note that $a:=[\a] \neq 0$: otherwise, if $\a = df$, contracting \eqref{key-identity} with  the fundamental 2-form $F$ of a Gauduchon metric $g$ implies
$$\Delta^g f   + g(df, \theta^g) = g(df, df) \ge 0,$$ where $\Delta^g$ is the Laplace operator with respect to $g$ and $\theta^g = J \delta^g F$. By the maximum principle, $f$ must be a constant and $\a=df=0$, a contradiction. By `(1) $\Rightarrow$ (2)', we  have $\mathcal{T}(S)=\{[\a]\}$. By Proposition~\ref{prop:open}, $H^3_{\a}(S, \bb R) \neq \{0\}$ so that, by \eqref{poincare} and \eqref{deRham-isom},  ${\rm dim}_{\bb R}\ H^3_{\a}(S, \bb R) = {\rm dim}_{\bb R}\  H^1_{-\a}(S, \bb R) = {\rm dim}_{\bb R}  H^1_{d_{{L_{\a}}}}(S, L_{\a})\neq 0$.  
Alternatively, in order to show $H^1_{d_{{L_{\a}}}}(S, L_{\a}) \neq \{0\}$ one can use a direct computation on $S_0$, as in \cite{otiman}. \end{proof}

\smallskip
\noindent {\it Case 4}:  $S_0$ belongs to  the class ${\rm VII}_0^+$. Let $[\a]  \in \mathcal{T}(S)$  and $0\neq [\b] \in H^1_{-\a}(S, \bb R)$. As $d_{-\a} \b =0$, $\bar \partial_{-\a} \b ^{0,1}=0$ where $\b^{0,1}= \frac{1}{2}(\b - i J\b)$ is the $(0,1)$-part of $\b$. As $H^{0,1}_{\bar \partial_{-\a}}(S, \bb C) \cong H^1(S, \cal L_{\a})$ and $H^2(S, \cal L_{\a})=\{0\}$ (see Lemma~\ref{l:vanishing}), it follows from the Riemann--Roch  formula and  \cite[Lemmas~2.4 \& 2.11]{lcs} that ${\rm dim}_{\bb C} \ H^{0,1}_{\bar \partial_{-\a}}(S, \bb C) = {\rm dim}_{\bb C} \ H^0(S, {\cal L}_{\a})=0$. Thus, $\b^{0,1} = \bar \partial_{-\a} (h+ i f)$ for some smooth real-valued functions $f, h$ on $S$. Equivalently, $\beta = d_{-\a} h + d_{-\a}^c f$.  Since 
$[\b] \neq 0 \in H^1_{-\a}(S, \bb R)$, $f$ is not identically zero whereas $d_{-\a} \b =0$ implies
\begin{equation}\label{PSH}
d_{-\a}d^c_{-\a} f =  dJdf + \a\wedge Jdf - J\a\wedge df + f(dJ\a + \a \wedge J\a)=0.
\end{equation}
Let $\omega$ be a $d_{-\a}$-closed $2$-form with $(1,1)$-part $F=\omega^{1,1} >0$. We consider the hermitian metric $g$ on $S$ whose fundamental $2$-form is $F$. By \cite[Lemmas~2.4 \& 2.5]{lcs}, we have
\begin{equation}\label{l:lcs}
\delta^g(\theta^g-\a) + g(\theta^g-\a, \a)=0,
\end{equation}
where we recall $\theta^g = J \delta^g F$. Taking contraction with $F$ in \eqref{PSH} yields
\begin{equation*}\label{M(f)}
M(f):= \Delta^g f + g(df, \theta^g-2\a) + f\big(\delta^g \a  + g(\a, \theta^g-\a)\big) =0.
\end{equation*}
The adjoint operator $M^*(f)$  of $M(f)$ with respect to the $L^2$-product induced by $g$ is 
\begin{equation*}
\begin{split}
M^*(f) &= \Delta^g f - g(d f, \theta^g-2\alpha) + f\Big(\delta^g(\theta^g-\a) + g(\a, \theta^g-\a)\Big) \\
           &=  \Delta^g f - g(d f, \theta^g-2\alpha),
           \end{split}
           \end{equation*}
where for the last equality we have used the property \eqref{l:lcs} of the hermitian metric $g$. By Hopf maximum principle,  the kernel of $M^*$ consist of the constant functions. It follows (see \cite[App.~A]{lcs}) that the principal eigenvalues satisfy  $\lambda_0(M^*)= \lambda_0(M)=0$ so that $M(f)=0$ admits a unique up to scale  non-zero solution $f$ which never vanishes on $M$.  Letting
\begin{equation*}
\tilde \a := \frac{d_{-\a}f}{f}= \a + d \log |f|,
\end{equation*}
its is straightforward to check that \eqref{PSH} is equivalent to
$$dJ\tilde \a + \tilde \a \wedge J\tilde \a=0.$$
Noting that $[\tilde \a]=[\a] \neq 0$ (as $[\a]\in \mathcal{T}(S)$ by assumption, see \cite[Prop.~3.5]{lcs}), this concludes the proof of `(4)  $\Rightarrow$ (1)' in Case 4 too.  \hfill$\Box$
\begin{rem}\label{kahler-rank} The condition (1) of Theorem~\ref{thm:main} above gives a direct link to the theory of complex surfaces of K\"ahler rank one, see \cite{HL,CT}. Indeed, the condition (1) of Theorem~\ref{thm:main} implies that the $(1,1)$-form $\a\wedge J\a$ is closed and positive in the sense of \cite{CT}, thus  by \cite[Cor. 4.3]{CT}, forcing the K\"ahler rank of $S$ be equal to $1$. We shall use a further ramification of the theory  of \cite{CT} in the proof of Theorem~\ref{thm:A} below, see in particular Lemma~\ref{l:choise-toma}.
\end{rem}

\subsection{Proof of Theorem~\ref{thm:A}} By Theorem~\ref{thm:main}, we only need to show that if the condition (1) of Theorem~\ref{thm:main} holds true, then $S$ must be a blow-up of an Inoue--Bombieri surface.

As $b_1(S)=1$, the torsion-free part $H_1(S, \bb Z)^f$ of $H_1(S, \bb Z)$ is $\bb Z$, so that $S$ admits an infinite cyclic cover $\tilde S$ whose fundamental group is the kernel of the morphism $\pi_1(S) \to \big(\pi_1(S)/[\pi_1(S), \pi_1(S)]\big)^f$. Denote by $\gamma$ the deck transformation on $\tilde S$,  such that $S= \tilde S / \langle \gamma \rangle$.  We then have the following reinterpretation of condition (1) of Theorem~\ref{thm:main} in terms of the theory developed in \cite{CT}.
\begin{lem}\label{l:choise-toma} Let $S$ be a compact complex surface with $b_1(S)=1$ and  $\tilde S$ the infinite cyclic cover of $S$ with $S=\tilde S/\langle \gamma \rangle$. Then, the condition ${\rm (1)}$ of Theorem~\ref{thm:main} is equivalent to the existence of a positive pluriharmonic function $\tilde h$ on  $\tilde S$,  which is automorphic in the sense that $\tilde h \circ \gamma  = C \tilde h$ for a positive constant $C$.
\end{lem}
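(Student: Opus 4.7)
The plan is to linearize condition (1) via the exponential substitution $\tilde h = e^{\tilde f}$, where $\tilde f$ is a primitive of the pull-back $\tilde\alpha = \pi^*\alpha$ on $\tilde S$, with $\pi\colon\tilde S\to S$ the covering projection. The heart of the argument is the identity
\[
dd^c\bigl(e^{\tilde f}\bigr) \;=\; e^{\tilde f}\bigl(dJ\tilde\alpha + \tilde\alpha\wedge J\tilde\alpha\bigr),
\]
obtained from $d\tilde h = \tilde h\,\tilde\alpha$ and $d^c\tilde h = \tilde h\, J\tilde\alpha$, which converts condition (1) into pluriharmonicity of $\tilde h$.

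First I would check that the pull-back of any closed $1$-form $\alpha$ on $S$ is exact on $\tilde S$. By construction $\pi_*\pi_1(\tilde S) = \ker\bigl(\pi_1(S)\to H_1(S,\bb Z)^f\bigr)$, so every loop in $\tilde S$ projects to a torsion class in $H_1(S,\bb Z)$; hence $\int_{\tilde \gamma} \pi^*\alpha = 0$ for every loop $\tilde \gamma$ in $\tilde S$ and one may write $\tilde\alpha = d\tilde f$. Since $\tilde\alpha$ is $\gamma$-invariant, $\tilde f\circ\gamma - \tilde f$ is a constant $c$, and a period computation shows $c = \int_{[\gamma_0]}\alpha$ over a generator $[\gamma_0]$ of $H_1(S,\bb Z)^f$; in particular $c\neq 0$ iff $[\alpha]\neq 0$.

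For $(1)\Rightarrow$ existence I would take $\alpha\in a$ with $a\neq 0$ satisfying $dJ\alpha+\alpha\wedge J\alpha=0$ and set $\tilde h := e^{\tilde f}>0$; the displayed identity gives $dd^c\tilde h=0$, and automorphy $\tilde h\circ\gamma = C\tilde h$ holds with $C = e^c>0$, $C\neq 1$. For the converse, I would set $\tilde f := \log\tilde h$ (permissible by positivity) and $\tilde\alpha := d\tilde f$; automorphy forces $\gamma^*\tilde\alpha = \tilde\alpha$, so $\tilde\alpha$ descends to a closed $1$-form $\alpha$ on $S$ whose period over $[\gamma_0]$ equals $\log C$, while dividing the identity above by $\tilde h>0$ yields $dJ\alpha + \alpha\wedge J\alpha = 0$.

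The one delicate point is the borderline case $C = 1$, which I expect to be the main obstacle to a clean formulation of the equivalence: then $\tilde h$ descends to a positive pluriharmonic function on $S$, and a maximum-principle argument (the locus where such a function attains its max is both open---since locally it is the real part of a holomorphic function---and closed by continuity) forces it to be constant, so $\tilde\alpha=0$ and only the trivial solution $[\alpha]=0$ is produced. Thus the genuine content of the equivalence matches $C\neq 1$ with the nonvanishing condition $[\alpha]\neq 0$ built into (1).
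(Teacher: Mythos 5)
Your proposal is correct and follows essentially the same route as the paper: pull $\alpha$ back to the cyclic cover, take a primitive $\tilde f$, and use the identity $dd^c(e^{\tilde f})=e^{\tilde f}(dJ\tilde\alpha+\tilde\alpha\wedge J\tilde\alpha)$, with $\tilde h=e^{\tilde f}$ in one direction and $\tilde f=\log\tilde h$ in the other. The only divergence is in certifying non-triviality of the class in the converse: you identify the constant with the period, $c=\log C=\int_{[\gamma_0]}\alpha$, so that $[\alpha]\neq 0$ iff $C\neq 1$ (and your discussion of the degenerate case $C=1$, where $\tilde h$ descends and must be constant, makes explicit a convention the paper leaves implicit), whereas the paper instead invokes the Gauduchon-metric maximum-principle argument from the proof of Lemma~\ref{l:inoue}.
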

\begin{proof}  Suppose first that  $\a$ is a  closed $1$-form on $S$ satisfying the condition ${\rm (1)}$ of Theorem~\ref{thm:main}. Let $\tilde \a$ be the pull back to $\tilde S$ of $\a$. By the very construction of $\tilde S$, any closed $1$-form on $S$ pulls back to an exact $1$-form on $\tilde S$.   We thus can write $\tilde \a = d\tilde f$ for some smooth function $\tilde f$ on $\tilde S$. Using that $\tilde \a$ is invariant under the action of $\gamma$, it follows that $\tilde f$ satisfies
$$\tilde f \circ \gamma - \tilde f = c, $$
for some real constant $c$.  It is easily seen that the relation $d_{-\tilde \a} d^c_{-\tilde \a} (1)=0$ is equivalent to  $d d^c (e^{\tilde f}) =0$, i.e. $\tilde h:= e^{\tilde f}$ is a positive pluriharmonic function on $\tilde S$ satisfying  $\tilde h \circ \gamma = e^c \tilde h.$ 

Conversely, if $\tilde h$ is a positive pluriharmonic function on $\tilde S$ satisfying $\tilde h \circ \gamma = e^c \tilde h$, letting $\tilde \a: =  d \log \tilde h$ defines a closed,  $\gamma$-invariant $1$-form on $\tilde S$ which  satisfies $d_{-\tilde \a} d^c_{-\tilde \a} (1)=0$. It follows that $\tilde \a$ is the pull-back of a (non-zero)  closed  $1$-form $\a$ on $S$,  satisfying $d_{-\a}d^c_{-\a} (1)=0$.  As we noticed in the proof of Lemma~\ref{l:inoue}, such a form $\a$ defines a nontrivial class $[\a]\neq 0 \in H^1_{dR}(S, \bb R)$. \end{proof}
Theorem~\ref{thm:A} is thus a direct consequence of Theorem~\ref{thm:main} and \cite[Thm~1]{brunella-inoue}. \hfill$\Box$


\bigskip

Combining Theorem~\ref{thm:main} with some observations from \cite{lcs} leads to 
\begin{cor}\label{c:summary}  Let $S$ be a compact complex surface  with $b_1(S)=1$ and $S_0$ its minimal model. Then, 
\begin{enumerate}
\item[\rm (a)] If $S_0$ is a secondary Kodaira surface or a Hopf surface, then $\mathcal{T}(S)=(-\infty, 0)$ where $H^1_{dR}(S, \bb R)\cong \bb R$ is oriented by the sign of the degree with respect to any Gauduchon metric on $S$ of the flat holomorphic line bundle $\cal L_{a}$ associated to $a\in H^{1}_{dR}(S)$, see \cite{lcs}.
\item[\rm (b)] If  $S_0$ is an Inoue--Bombieri surface, then $\mathcal{T}(S)=\{a\}$.
\item[\rm (c)]  If  $S_0$ belongs to ${\rm VII}_0^+$, then $\mathcal{T}(S) \subset \bb R$ is a non-empty and open subset of $(-\infty, 0)$.
\end{enumerate}
\end{cor}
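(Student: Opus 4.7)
The plan is to dispatch the three cases using the machinery already assembled, reserving the substantive work for the equality asserted in case (a), which I would attribute to the authors' earlier paper \cite{lcs}.

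As a preliminary common to all three cases, Theorem~\ref{thm:lcs} gives $\mathcal{T}(S)\neq\emptyset$, and \cite[Prop.~3.5]{lcs} supplies the inclusion $\mathcal{T}(S)\subset(-\infty,0)$ under the orientation of $H^1_{dR}(S,\bb R)\cong\bb R$ stipulated in the statement (namely, the sign of $\deg_g\cal L_a$ for any Gauduchon metric $g$). The rest of the proof is case-by-case.

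Case (c) is the most direct: since the four classes listed in Theorem~\ref{thm:kodaira} are mutually exclusive, an $S_0\in{\rm VII}_0^+$ is not a blow-up of an Inoue--Bombieri surface, so Theorem~\ref{thm:A} forces $\mathcal{T}(S)$ to be a non-empty open subset of $H^1_{dR}(S,\bb R)\cong\bb R$; intersecting with $(-\infty,0)$ yields the claim. Case (b) is nothing but Lemma~\ref{l:inoue}, which was already proved in Case~3 of the argument for Theorem~\ref{thm:main}, so no further work is needed.

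For (a), Lemma~\ref{l:hopf} gives $H^1_{d_{L_a}}(S,L_a)=0$ for every $a\neq 0\in H^1_{dR}(S,\bb R)$. Hence condition (4) of Theorem~\ref{thm:main} fails identically on $\mathcal{T}(S)$, and the equivalence (3)$\Leftrightarrow$(4) there shows that $\mathcal{T}(S)$ is open in $\bb R$. To upgrade this non-empty open subset of $(-\infty,0)$ to the whole half-line I would appeal to the explicit Vaisman locally conformally K\"ahler metrics on the minimal model, namely Gauduchon--Ornea~\cite{GO} on primary Hopf surfaces and Belgun~\cite{B} on secondary Kodaira surfaces; appropriate rescalings of the parallel Lee form realize every class in $(-\infty,0)$, Lemma~\ref{cover} handles secondary Hopf surfaces by passing to a primary finite cover, and Lemma~\ref{blow-up} absorbs any blow-ups down to $S$.

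The main obstacle is precisely this last step in case (a): Theorem~\ref{thm:main} and Lemma~\ref{l:hopf} only yield openness, and promoting an open subinterval of $(-\infty,0)$ to equality with the entire half-line genuinely requires the explicit continuous family of LCS taming forms furnished by the rigidity of the Vaisman model on the minimal surface, a construction carried out in \cite{lcs}.
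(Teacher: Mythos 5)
Your treatment of (b) and (c) matches the paper: (b) is exactly Lemma~\ref{l:inoue}, and (c) follows because a surface whose minimal model lies in ${\rm VII}_0^+$ is not a blow-up of an Inoue--Bombieri surface, so Theorem~\ref{thm:A} (i.e.\ Theorem~\ref{thm:main} combined with Brunella's theorem) gives non-empty openness, while the inclusion $\mathcal{T}(S)\subset(-\infty,0)$ is quoted from \cite{lcs} (the paper cites \cite[Prop.~4.3]{lcs}, not Prop.~3.5, for this half-line bound). In (a) your overall strategy --- deferring the equality $\mathcal{T}(S)=(-\infty,0)$ to the Vaisman metrics of Belgun and Gauduchon--Ornea and to \cite{lcs} --- is also the paper's; the openness detour through Lemma~\ref{l:hopf} and the equivalence (3)$\Leftrightarrow$(4) of Theorem~\ref{thm:main} is correct but superfluous, since, as you yourself note, an open subinterval of $(-\infty,0)$ is not yet the whole half-line.

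The genuine gap is in the two transfer steps of (a): Lemma~\ref{cover} and Lemma~\ref{blow-up} are purely cohomological statements (injectivity of the pull-back on $H^k_{\a}$ under finite covers, and equality of twisted Betti numbers under blow-up); neither produces a taming LCS structure on the quotient of a primary Hopf surface or on a blow-up, so as written they cannot carry the Lee classes realized on the minimal (or primary) model over to $S$. What the paper actually uses is: the blow-up stability of Lee classes, giving $\mathcal{T}(S_0)\subset\mathcal{T}(S)$, which comes from \cite{tricerri,Vul} (see \cite[Prop.~3.4]{lcs}), combined with the sandwich $\mathcal{T}(S_0)\subset\mathcal{T}(S)\subset(-\infty,0)$; the Hopf case (primary and secondary alike) quoted wholesale from \cite[Prop.~5.1]{lcs}; and, for secondary Kodaira surfaces, the equality $\mathcal{T}(S_0)=(-\infty,0)$ obtained from \cite[Lemma~3.7]{lcs} applied to Belgun's Vaisman metric, using that Vaisman metrics are pluricanonical --- this is the precise content behind your phrase ``appropriate rescalings of the parallel Lee form,'' which does not follow from parallelism alone. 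If you wish to keep your finite-cover route for secondary Hopf surfaces, you must average a taming LCS form over the finite deck group (legitimate because the Lee form is pulled back from the quotient and the deck transformations are biholomorphisms), rather than cite Lemma~\ref{cover}; similarly, the passage from $S_0$ to its blow-ups needs the Tricerri--Vuletescu construction, not Lemma~\ref{blow-up}.
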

\begin{proof} (a) The case when $S_0$ is a Hopf surface is established in \cite[Prop.~5.1]{lcs}. The case when $S_0$ is a secondary Kodaira surface follows by a similar argument:
The pull-back by the blow-down map $b : S \to S_0$ defines an isomorphism  $b^*: H^{1}_{dR}(S_0, \bb R) \to H^1_{dR}(S, \bb R)$ (compare with Lemma~\ref{blow-up}(a)) which by \cite{tricerri,Vul} (see also \cite[Prop.~3.4]{lcs}) embeds $\mathcal{T}(S_0)$ into $\mathcal{T}(S)$. According to  \cite[Prop. 4.3]{lcs},  $\mathcal{T}(S_0) \subset \mathcal{T}(S) \subset (-\infty, 0)$, so it will be enough to show that 
$\mathcal{T}(S_0) =(-\infty, 0)$. As $S_0$ admits a Vaisman locally conformally K\"ahler metric by \cite{B}, the latter property follows from \cite[Lemma~3.7]{lcs} by noting that Vaisman metrics are pluricanonical.

The statement (b) is established in Lemma~\ref{l:inoue}.

(c) See Theorem~\ref{thm:main}.  \end{proof}

As another application of Theorem~\ref{thm:main}, one can  consider  {\it exact} LCS structures, i.e. LCS structures  for which $\omega = d_\a \eta = d\eta - \a \wedge \eta$ for some $1$-form $\eta$. Theorem~\ref{thm:EM} provides the existence of exact LCS structures with arbitrary large Lee classes  on any compact almost complex $2n$-manifold with non-zero $b_1(M)$. As an exact LCS structure does not admit symplectically embedded spheres (this is because by making a conformal modification of $\omega$ we can assume that $\a=0$ on a tubular neighbourhood of the sphere) they cannot tame the complex structure of a complex surface with a rational curve, in particular of a {\it non-minimal}  complex surface or a minimal surface in the class ${\rm VII}_0^+$ which contains a cycle of rational curves.  Similarly, as observed by  A. Otiman~\cite{otiman} 
\begin{cor}\label{c:exact}\cite{otiman} The Inoue--Bombieri surfaces admit no exact LCS structure taming its almost complex structure.
\end{cor}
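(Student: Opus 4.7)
My plan is to obtain a contradiction from the assumption that an exact LCS form $\omega=d_\a\eta$ tames $J$ on an Inoue--Bombieri surface $S_0$, by pairing $\omega$ against the closed positive $(1,1)$-form $\a\wedge J\a$ produced by Lemma~\ref{l:inoue}. Since $\mathcal{T}(S_0)=\{a\}$ by Corollary~\ref{c:summary}(b), the Lee class of any taming LCS form must equal $a$, and after a conformal gauge transformation $\omega\mapsto e^{-f}\omega$, $\eta\mapsto e^{-f}\eta$ (which preserves $d_\a$-exactness, since a short check gives $d_{\a+df}(e^{f}\cdot)=e^{f}d_\a(\cdot)$), I may assume that the Lee form of $\omega$ is precisely the distinguished representative $\a$ of Lemma~\ref{l:inoue}. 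By construction this $\a$ satisfies the key identity \eqref{key-identity}, namely $dJ\a+\a\wedge J\a=0$, and its pull-back $dw_2/w_2$ to $\bb H\times\bb C$ shows that it is nowhere vanishing on $S_0$.

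The next step is to observe that \eqref{key-identity} combined with $d\a=0$ forces $\a\wedge J\a$ to be a closed real $(1,1)$-form:
\[
d(\a\wedge J\a)=-\a\wedge dJ\a=\a\wedge(\a\wedge J\a)=0.
\]
Being of the form $\beta\wedge J\beta$ with $\beta=\a$ nowhere zero, $\a\wedge J\a$ is moreover pointwise strictly positive, so the taming assumption $\omega^{1,1}>0$ yields
\[
\int_{S_0}\omega\wedge\a\wedge J\a=\int_{S_0}\omega^{1,1}\wedge\a\wedge J\a>0.
\]

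On the other hand, writing $\omega=d_\a\eta=d\eta-\a\wedge\eta$, the term $\a\wedge\eta\wedge\a\wedge J\a$ vanishes identically because $\a\wedge\a=0$, while closedness of $\a\wedge J\a$ turns the remaining term into an exact form,
\[
\omega\wedge\a\wedge J\a=d\eta\wedge\a\wedge J\a=d(\eta\wedge\a\wedge J\a),
\]
so that Stokes' theorem gives $\int_{S_0}\omega\wedge\a\wedge J\a=0$, contradicting the strict positivity above.

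\textbf{Main obstacle.} The argument is essentially a one-line integration by parts once Lemma~\ref{l:inoue} is available; the only conceptual ingredient is the closedness of $\a\wedge J\a$, which is an Inoue--Bombieri-specific consequence of \eqref{key-identity}. No serious technical obstacle is anticipated; the gauge normalization of the Lee form is the only point that must be handled carefully in order to use \eqref{key-identity} verbatim.
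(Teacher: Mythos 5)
Your argument is correct, but it is a genuinely different proof from the one in the paper. The paper's proof is soft: if $\omega=d_{\a}\eta$ tames $J$, then $d_{\tilde\a}\eta$ is still a taming LCS form for every closed $\tilde\a$ that is $\mathcal{C}^\infty$-close to $\a$, so the Lee class would be an \emph{interior} point of $\mathcal{T}(S)$, contradicting $\mathcal{T}(S)=\{a\}$; it never uses the explicit identity \eqref{key-identity}, only the non-openness of $\mathcal{T}(S)$. You instead exploit the Inoue--Bombieri-specific representative $\a$ of Lemma~\ref{l:inoue}: after the (correctly justified) conformal gauge normalization $d_{\a+df}(e^{f}\cdot)=e^{f}d_{\a}(\cdot)$, the identity $dJ\a+\a\wedge J\a=0$ makes $\a\wedge J\a$ closed (indeed $d_{-\a}$-closed), so it pairs to zero with any $d_{\a}$-exact $2$-form by Stokes, while the taming condition forces $\int\omega^{1,1}\wedge\a\wedge J\a>0$ --- exactly the positivity the paper itself invokes in the proof of Theorem~\ref{thm:main}, `(1) $\Rightarrow$ (2)'. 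The only cosmetic point is that $\a\wedge J\a$ is a degenerate semi-positive $(1,1)$-form rather than ``strictly positive'', but its wedge with $\omega^{1,1}>0$ is a positive volume form wherever $\a\neq0$, which is all you use. Comparing the two: the paper's perturbation argument generalizes immediately to any surface where $\mathcal{T}(S)$ is not open and requires no choice of special representative, whereas your pairing argument is more elementary and explicit (one integration by parts), at the price of using both the normalization of the Lee class via $\mathcal{T}(S_0)=\{a\}$ and the special form $\a$ satisfying \eqref{key-identity}.
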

\begin{proof} If $\omega=d_{\a} \eta$ is an exact LCS structure which tames $J$, so is then $\tilde \omega = d_{\tilde \a} \eta$ for each closed $1$-form $\tilde \a$ which is $\mathcal{C}^{\infty}$ close to $\a$. It follows that $\a$ is an interior point of $\mathcal{T}(S)$, a contradiction.\end{proof}
By contrast,  any secondary Kodaira surface and any Hopf surface admits a locally conformally K\"ahler structure with potential, i.e. of the form $\omega = d_{\a}d^c_{\a} f$   for some smooth function $f$ (this essentially follows from the fact that any  Vaisman locally conformally K\"ahler structure can be written up to scale as $\omega = d_{\a} d^c_{\a} (1)$, see \cite{Va} or \cite[(14)]{lcs}) so that on these minimal complex surfaces  any $[\a] \in \mathcal{T}(S)$  can be realized as the Lee class of an exact  LCS structure which tames the complex structure.  

\subsection{Proof of Theorem~\ref{thm:B}} Let $S$ be a compact complex surface with $b_1(S)=1$ and $S_0$ its minimal model. Identifying $H^1_{dR}(S_0, \bb R)\cong H^1_{dR}(S, \bb R)\cong \bb R$ via the blow-down map $b: S \to S_0$, we have  by \cite{Vul,tricerri},  $\mathcal{C}(S)= \mathcal{C}(S_0) \subset \bb R$. Thus, we can assume without loss that $S=S_0$ is minimal.  We consider the following three cases, see Theorem~\ref{thm:kodaira}.

\smallskip
\noindent
{\it Case 1}: $S \in {\rm VII}^+_0$. We shall use  a result of Goto~\cite[Thm.~2.3]{G} (compare with Proposition~\ref{prop:open} above).

\begin{thm}\cite{G}\label{thm:goto} Let $X$ be a compact complex manifold endowed with a locally conformally K\"ahler form $\omega$ with Lee form $\a$.  Suppose  that $H^3(X, L^*_{\a})=\{0\}$ and that for every $\bar \partial_{\a}$-closed  $(0,2)$-form $\psi$ there exists a $(0,1)$-form $\gamma$ such that $\partial_{\a}\psi= \partial_{\a}\bar \partial_{\a} \gamma$. Then, for any  closed $1$-form $\b$, there exists an $\varepsilon >0$ such that  for any $0<|t|<\varepsilon$, $X$ has a locally conformally K\"ahler form $\omega_t$ with Lee form $\a + t\b$.
\end{thm}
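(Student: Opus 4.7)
The plan is to mirror the iterative power-series construction of Proposition~\ref{prop:open}, but carried out within the space of real $(1,1)$-forms so as to preserve the locally conformally K\"ahler type. Writing $\omega(t)=\sum_{i\ge 0}\omega_i t^i$ with $\omega_0=\omega$, the equation $d_{\alpha+t\beta}\omega(t)=0$ is equivalent order by order to the recursion $d_\alpha\omega_{i+1}=\beta\wedge\omega_i$, and positive-definiteness of $\omega(t)^{1,1}$ is an open condition, so once each $\omega_i$ has been produced as a real $(1,1)$-form the series $\omega(t)$ will automatically be locally conformally K\"ahler for $|t|$ small. The heart of the matter is therefore the inductive solvability of $d_\alpha\omega_{i+1}=\beta\wedge\omega_i$ inside $\mathcal{E}^{1,1}(X,\bb R)$.

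Assume $\omega_0,\dots,\omega_i$ are real $(1,1)$-forms satisfying the recursion up to order $i$. A short computation using $d\beta=0$ and the inductive identity $d_\alpha\omega_i=\beta\wedge\omega_{i-1}$ shows that $\beta\wedge\omega_i$ is $d_\alpha$-closed. The hypothesis $H^3(X,L^*_\alpha)=\{0\}$, which through the isomorphism \eqref{deRham-isom} coincides with $H^3_{d_\alpha}(X,\bb R)=\{0\}$, then yields a real $2$-form $\zeta_i$ with $d_\alpha\zeta_i=\beta\wedge\omega_i$; for definiteness we take the canonical choice $\zeta_i:=d^*_\alpha\bb G_\alpha(\beta\wedge\omega_i)$ with $\bb G_\alpha$ the Green's operator of $\Delta^g_\alpha$ on $3$-forms, exactly as in Proposition~\ref{prop:open}. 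Decomposing by type $\zeta_i=\zeta_i^{2,0}+\zeta_i^{1,1}+\zeta_i^{0,2}$ with $\zeta_i^{1,1}$ real and $\zeta_i^{0,2}=\overline{\zeta_i^{2,0}}$, the vanishing of the $(0,3)$-part of $d_\alpha\zeta_i=\beta\wedge\omega_i$ reads $\bar\partial_\alpha\zeta_i^{0,2}=0$. Now the second hypothesis, applied to $\psi=\zeta_i^{0,2}$, produces a $(0,1)$-form $\gamma_i$ satisfying $\partial_\alpha\zeta_i^{0,2}=\partial_\alpha\bar\partial_\alpha\gamma_i$, and one defines
\begin{equation*}
\omega_{i+1}\,:=\,\zeta_i^{1,1}-\partial_\alpha\gamma_i-\bar\partial_\alpha\bar\gamma_i,
\end{equation*}
which is manifestly real and of type $(1,1)$. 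A direct computation gives $\bar\partial_\alpha\omega_{i+1}=\bar\partial_\alpha\zeta_i^{1,1}+\partial_\alpha\bar\partial_\alpha\gamma_i$, and substituting $\partial_\alpha\bar\partial_\alpha\gamma_i=\partial_\alpha\zeta_i^{0,2}$ followed by the $(1,2)$-piece $\bar\partial_\alpha\zeta_i^{1,1}+\partial_\alpha\zeta_i^{0,2}=\beta^{0,1}\wedge\omega_i$ of $d_\alpha\zeta_i=\beta\wedge\omega_i$ yields $\bar\partial_\alpha\omega_{i+1}=\beta^{0,1}\wedge\omega_i$. The conjugate equation $\partial_\alpha\omega_{i+1}=\beta^{1,0}\wedge\omega_i$ is automatic from reality, and together they close the inductive step $d_\alpha\omega_{i+1}=\beta\wedge\omega_i$.

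Convergence of $\sum_i\omega_i t^i$ in $\mathcal{C}^{k,\lambda}(X)$ for $|t|<\varepsilon$, and smoothness of the resulting form $\omega(t)$, follow exactly as in the last paragraph of the proof of Proposition~\ref{prop:open}: the construction $\omega_i\mapsto\omega_{i+1}$ is a bounded linear operator on a H\"older space whose norm bounds the radius of convergence of the geometric series, and smoothness is obtained by bootstrapping the elliptic equation $(d^*_\alpha d_{\alpha+t\beta}+d_{\alpha+t\beta}d^*_\alpha)\omega(t)=d_{\alpha+t\beta}d^*_\alpha\omega$ at each fixed $t$. The main obstacle I anticipate is the quantitative control of the hypothesis on $(0,2)$-forms: while it supplies $\gamma_i$ pointwise in $i$, it carries no intrinsic estimate, so one must combine it with the Hodge decomposition of $\bar\partial_\alpha$-closed $(0,2)$-forms into a harmonic part plus a $\bar\partial_\alpha$-exact part, and apply the hypothesis only on the finite-dimensional harmonic summand, in order to obtain a uniform bound $\|\gamma_i\|_{\mathcal{C}^{k,\lambda}}\le C\|\zeta_i^{0,2}\|_{\mathcal{C}^{k,\lambda}}$ compatible with the bound on $\zeta_i$ furnished by $\bb G_\alpha$.
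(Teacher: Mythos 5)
The paper does not actually prove this statement: Theorem~\ref{thm:goto} is imported verbatim from Goto \cite[Thm.~2.3]{G}, and the only argument given in the text is for the weaker LCS analogue, Proposition~\ref{prop:open}. So the relevant comparison is with that proposition, and your proposal is the natural (and essentially Goto's own) upgrade of it: you keep the power-series ansatz and the recursion $d_\a\omega_{i+1}=\b\wedge\omega_i$, but force each $\omega_{i+1}$ to be a real $(1,1)$-form by projecting the co-exact solution $\zeta_i$ onto its $(1,1)$-part and absorbing the $(0,2)$-defect using the hypothesis $\partial_\a\zeta_i^{0,2}=\partial_\a\bar\partial_\a\gamma_i$. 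The algebra of the inductive step is correct: since $\a$ and $\b$ are real and closed one has $\bar\partial_\a^2=0$ and $\partial_\a\bar\partial_\a+\bar\partial_\a\partial_\a=0$, the $(0,3)$-component of $d_\a\zeta_i=\b\wedge\omega_i$ gives $\bar\partial_\a\zeta_i^{0,2}=0$ so the hypothesis applies, and your computation of $\bar\partial_\a\omega_{i+1}$ together with reality does close the recursion. Your Hodge-theoretic device for extracting a \emph{bounded} solution operator $\psi\mapsto\gamma$ from a purely existential hypothesis (apply it only on the finite-dimensional harmonic summand, solve the exact part with $\bar\partial_\a^*\mathbb{G}$) is also the right fix and is needed for the geometric-series bound.

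The one genuine gap is the regularity endgame. You assert that smoothness of $\omega(t)$ follows ``exactly as in the last paragraph of the proof of Proposition~\ref{prop:open}'', but the elliptic equation used there, $(d^*_\a d_{\a+t\b}+d_{\a+t\b}d^*_\a)\omega(t)=d_{\a+t\b}d^*_\a\omega_0$, rests on the identity $d^*_\a\omega(t)=d^*_\a\omega_0$, which holds because in that proof every correction term $\omega_{i+1}=d^*_\a\mathbb{G}_\a(\b\wedge\omega_i)$ is $d^*_\a$-co-exact. In your construction $\omega_{i+1}=\zeta_i^{1,1}-\partial_\a\gamma_i-\bar\partial_\a\bar\gamma_i$ is \emph{not} in the image of $d^*_\a$ (neither the type projection nor the $\gamma_i$-terms preserve co-exactness), so that equation is no longer available and the bootstrap must be replaced. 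This matters because your Schauder estimates give $\|\omega_{i+1}\|_{\mathcal{C}^{k,\l}}\le C_k\|\omega_i\|_{\mathcal{C}^{k,\l}}$ with a constant depending on $k$, hence a radius of convergence that a priori shrinks with $k$; without a substitute elliptic system (or a uniform gauge fixing for the $\omega_i$) you only get $\omega(t)$ of class $\mathcal{C}^{k,\l}$ on a $k$-dependent interval, not a smooth LCK form on a fixed interval. This is repairable (Goto works with a fixed gauge/Green's operator adapted to the $(1,1)$-problem), but as written the smoothness claim is not justified.
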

As  $S \in {\rm VII}^+_0$, we know by Lemma~\ref{l:vanishing}  that  $H^{0,2}_{\bar \partial_{\a}}(S, \bb C)\cong H^2(S, \cal L^*_{\a}) =\{0\}$,  which shows that  the second necessary condition of Theorem~\ref{thm:goto} for the class $[\a]\in \mathcal{C}(S)$ to be an interior point is aways satisfied. In other words, $[\a] \in \cal{C}(S)$ is an interior point for ${\cal C}(S)$ provided that $H^3_{d_{L^*}}(S, L^*) \cong H^1_{d_L}(S, L)=\{0\}$ where $L=L_{\a}$ (see \eqref{poincare}). By Theorem~\ref{thm:main}, the latter condition fails if and only if $\cal{C}(S) \subset \mathcal{T}(S)=\{[\a]\}$, showing that then ${\cal C}(S)$ is either empty or a single point.

\smallskip
\noindent
{\it Case 2}: $S$ is  a Hopf surface or a secondary Kodaira surface. We claim that $\mathcal{C}(S)=\mathcal{T}(S)=(-\infty, 0)\subset \mathbb{R}$. In fact, for Hopf surfaces  this is established in  \cite[Prop. 5.1]{lcs} whereas in the case when $S$ is a secondary Kodaira surface this follows from the arguments in the proof of Corollary~\ref{c:summary}(a). 

\smallskip
\noindent
{\it Case 3}: $S$ is an Inoue--Bombieri surface. In this case,  according to Lemma~\ref{l:inoue}, $\mathcal{C}(S) \subset \mathcal{T}=\{a\}$ is either empty or a point. Both case do appear, as noticed in \cite{B}.  \hfill$\Box$

\section{Vanishing results of twisted cohomologies for class ${\rm VII}$ surfaces.}
\subsection{Flat versus topologically trivial holomorphic line bundles}\label{s:complex-valued} Most of the theory developed in Section~\ref{s:preliminaries} for a {\it real} closed $1$-form $\a$ generalizes {\it mutatis mutandis} to the case when $\a$ is a closed complex-valued $1$-form: We can associate to such an $\a$  the deRham complex $d_{\a} : \mathcal{E}^k(X, \bb C) \mapsto \mathcal{E}^{k+1}(X, \bb C)$ with cohomology groups $H^k_{\a}(X, \bb C)$  and the  Dolbeault complexes with respect to the operator $\bar \partial_{\a} = \bar \partial - \a^{0,1} \wedge,$ which give rise to cohomology groups $H^{k,0}_{\bar \partial_{\a}}(X, \bb C)$.  Furthermore,  $\a$ induces a holomorphic structure on $\cal L_{\a}=\bb C \times X$, given by  $\bar\partial_{\a} s = (d s)^{0,1}  + \alpha^{0,1}\otimes s$ and  we have the identification
$$H^0(X, \Omega^k \otimes \cal L_{\a}) \cong H^{k,0}_{{\bar \partial}_{-\a}}(X, \bb C).$$ The conclusion of Lemma~\ref{blow-up}(b) holds true as well in this more general context.   

\smallskip
Recall that  equivalence classes of flat complex line bundles are classified by elements of $H^1(X, \bb C^*)$. By the short exact sequences
\begin{equation*}
\begin{split}
 \{0\} & \to \bb Z  {\hookrightarrow} \bb C   \stackrel{\exp 2\pi i \cdot}{\longrightarrow} \bb C^* \to \{1\}  \\
 \{0\} & \to \bb Z   {\hookrightarrow} \cal O \stackrel{\exp 2\pi i \cdot}{\longrightarrow}  {\cal O}^* \to \{1\}
 \end{split}
 \end{equation*}
we obtain the commutative digram
\begin{diagram}\label{commutative}
0  & \rTo  & H^1(X, \bb Z)  &  \rTo       & H^1(X, \bb C) & \rTo  & H^1(X, \bb C^{*}) & \rTo^{c_1} & H^2(X, \bb Z)  & \rTo \cdots  \\
             &          & \vEq                  &                 &   \dTo^j              &          &         \dTo^k                &         &            \vEq        &           \\                      
0  & \rTo  & H^1(X, \bb Z)  &  \rTo       & H^1(X, \cal O) & \rTo  & H^1(X, {\cal O}^{*}) & \rTo^{c_1} & H^2(X, \bb Z)  & \rTo \cdots  \\
\end{diagram}
The first line shows that for {any} {\it topologically trivial} flat holomorphic line bundle $\cal L \in H^1(X, \bb C^*)$, there exists  a closed complex-valued $\a$ with $\cal L =\cal L_{\a}$. Furthermore, if $j$ is injective, then $k$ is injective too whereas  if $j$ is an isomorphism, then $k$ is an isomorphism between the group $H_0^1(X, \bb C^*)$ of equivalent classes of topologically trivial flat holomorphic line bundles  and the group ${\rm Pic}^0(X)$ of equivalence classes of topologically trivial holomorphic line bundles.

In this section, we shall apply this construction in the special case of a compact complex surface $S$ with first Betti number $b_1(S)=1$. It is well-known (see e.g. \cite[(14)]{kodaira}) that on a compact complex surface $S$,  the morphism $j$ in the above diagram is always surjective and is an isomorphism iff $H^{1,0}(S, \bb C)=\{0\}$. As the latter property holds true for a complex surface with $b_1(S)=1$ (see \cite[Thm. 3]{kodaira}),  we have the following well-known (see e.g. \cite{LT})

\begin{lem}\label{l:kodaira} On a compact complex surface $S$  with $b_1(S)=1$,  $H^1_0(S, \bb C^*) \cong {\rm Pic}^0(S)$. In particular, for {\it any} holomorphic line bundle $\cal L \in {\rm Pic}^0(S)$ there exists a closed complex-valued $1$-form $\alpha$ such that $\cal L = \cal L_{\a}$. 
\end{lem}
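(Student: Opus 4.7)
The plan is to read the lemma directly off the commutative diagram of exponential cohomology sequences displayed just above the statement, reducing everything to the single claim that the central vertical map $j\colon H^1(S, \bb C) \to H^1(S, \cal O)$ is an isomorphism. Granted this, the diagram restricts to a morphism of short exact sequences
\begin{align*}
0 &\to H^1(S, \bb Z) \to H^1(S, \bb C) \to H^1_0(S, \bb C^*) \to 0, \\
0 &\to H^1(S, \bb Z) \to H^1(S, \cal O) \to {\rm Pic}^0(S) \to 0,
\end{align*}
obtained by cutting out $\ker(c_1)$ from each of the original two rows, and the five lemma (with the identity on the left and $j$ an isomorphism in the middle) immediately produces the desired group isomorphism $k\colon H^1_0(S, \bb C^*) \to {\rm Pic}^0(S)$.

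To prove that $j$ is an isomorphism I would invoke the two facts quoted in the paragraph preceding the statement: on any compact complex surface $j$ is surjective, and its kernel is naturally identified with $H^{1,0}(S, \bb C)$. Both rest on the degeneration of the Fr\"olicher spectral sequence at $E_1$ in total degree one, a general fact for compact complex surfaces going back to Kodaira. Since $b_1(S)=1$, Kodaira's classical theorem \cite[Thm.~3]{kodaira} gives $h^{1,0}(S)=0$ (and $h^{0,1}(S)=1$), so $\ker j = 0$ and $j$ is indeed an isomorphism.

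The ``in particular'' clause then follows immediately from the top row of the diagram: any $\cal L \in {\rm Pic}^0(S)$ corresponds under $k^{-1}$ to a class in $H^1_0(S, \bb C^*)$, which in turn lifts to some class $[\a] \in H^1(S, \bb C)$ represented by a closed complex-valued $1$-form $\a$; the flat holomorphic line bundle $\cal L_{\a}$ built from $\a$ as in Section~\ref{s:preliminaries} (extended to complex $\a$ as in Section~\ref{s:complex-valued}) realizes this lift, so that $\cal L = \cal L_{\a}$. The only substantive input is the Hodge-theoretic vanishing $h^{1,0}(S)=0$ that makes $j$ an isomorphism; once this is in hand the remainder is pure diagram-chasing and no further obstacle is expected.
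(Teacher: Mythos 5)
Your proposal is correct and follows essentially the same route as the paper, which establishes the lemma via the displayed commutative diagram together with the facts (quoted from Kodaira) that $j$ is always surjective on a compact complex surface with kernel $H^{1,0}(S,\bb C)$, and that $h^{1,0}(S)=0$ when $b_1(S)=1$. Your five-lemma packaging and the lift of $\cal L$ through the top row are just a slightly more explicit write-up of the diagram chase the paper leaves implicit.
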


\subsection{A characterization of Enoki surfaces}\label{s:enoki} An important class of examples of minimal complex surfaces  in the class ${\rm VII}_0^+$, called {\it Enoki surfaces}, was introduced and studied by Enoki in \cite{Enoki}. We shall use here the following characterization of such surfaces
\begin{thm}\label{thm:enoki}\cite{Enoki}. A minimal compact complex surface in  the class {\rm VII}$_0^+$ is an Enoki surface if and only if there exits a non-trivial holomorphic  line bundle $\cal L \in {\rm Pic}^0(S)$  which admits a meromorphic section $f$.  In this case, the divisor defined by $(f)$ is $m D$ for some $m\in \mathbb Z$, where $D$ is the unique cycle of rational curves of $S$.
\end{thm}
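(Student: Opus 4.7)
My plan is to prove the two directions of the equivalence, relying on two structural facts about a minimal complex surface $S$ of class ${\rm VII}_0^+$: (i) by Donaldson's theorem the intersection form on $H^2(S,\bb R)$ is negative definite (since $b^+(S)=0$); and (ii) every compact irreducible curve on $S$ is rational, and the configuration of compact curves decomposes into trees, chains, and at most one cycle of rational curves, with only a cycle being able to produce a singular intersection matrix---whose kernel is then one-dimensional and spanned by the fundamental cycle vector.

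For the direction $S$ \emph{Enoki} $\Rightarrow$ existence of $\cal L$: the characterization of Enoki surfaces that I shall use is that $S$ contains a cycle $D$ of rational curves whose intersection matrix is negative semi-definite with a one-dimensional kernel, equivalently $D \cdot D = 0$. Combined with (i), this forces $[D]=0\in H^2(S,\bb Z)$, so $\cal L:=\cal O(D)$ lies in ${\rm Pic}^0(S)$, and its tautological holomorphic section $f$ is a (holomorphic, hence meromorphic) section with $(f)=D\neq 0$. The bundle $\cal L$ is non-trivial because an isomorphism $\cal O(D)\cong \cal O$ would exhibit $D$ as the divisor of a global holomorphic function on the compact surface $S$, contradicting $H^0(S,\cal O)=\bb C$ and $D\ne 0$.

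For the converse, write $(f)=\sum_i m_i C_i$ with distinct irreducible (rational) curves $C_i$ and integers $m_i\ne 0$. The vanishing $c_1(\cal L)=0$ yields both $\sum_i m_i [C_i]=0 \in H^2(S,\bb Z)$ and, by pairing with each $C_j$, the relation $(f)\cdot C_j = 0$; equivalently, the vector $(m_i)$ lies in the kernel of the intersection matrix $M=(C_i\cdot C_j)$. By (i), $M$ is the restriction of the negative definite intersection form to the span of the $[C_i]$, and hence its kernel corresponds bijectively to linear relations among those classes. Were the $[C_i]$ linearly independent, $\ker M = 0$ would force $(m_i)=0$, contradicting the non-triviality of $\cal L$. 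Hence the classes $[C_i]$ must be linearly dependent; by (ii) such a dependence can arise only when the support of $(f)$ contains a cycle of rational curves whose intersection matrix is singular, which is precisely the defining feature of an Enoki surface. The kernel of the intersection matrix of this cycle is one-dimensional and spanned by the fundamental cycle vector of $D$, so $(m_i)$ must be supported on the components of $D$ (coefficients at curves outside $D$ vanish) and be proportional to the fundamental cycle vector, yielding $(f)=mD$ for some $m\in \bb Z$, necessarily non-zero by non-triviality of $\cal L$.

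The principal obstacle is the invocation of fact (ii)---the structural classification of compact curve configurations on minimal ${\rm VII}_0^+$ surfaces. One must justify carefully that trees and chains of rational curves contribute negative-definite blocks to $M$ (so that they cannot on their own produce a null relation), that at most one cycle of rational curves occurs on such a surface, and that a singular cycle has a one-dimensional kernel spanned by its fundamental cycle. These facts are classical in the geometry of class ${\rm VII}$ surfaces, going back to Enoki's original work and refined by Nakamura, Dloussky, and others.
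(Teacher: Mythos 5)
Your first direction is essentially sound, but note one point you should make explicit: negative definiteness of the intersection form together with $D\cdot D=0$ only gives that $c_1(\mathcal O(D))$ is a torsion class in $H^2(S,\bb Z)$, while ${\rm Pic}^0(S)$ in this paper means $c_1=0$ in integral cohomology; you need the additional fact that $H^2(S,\bb Z)$ is torsion-free for an Enoki surface (because $\pi_1(S)\cong\bb Z$, so $H_1(S,\bb Z)=\bb Z$) to conclude $[D]\in{\rm Pic}^0(S)$. Also be aware that the paper gives no proof of this theorem at all: it is quoted from \cite{Enoki}, so there is no ``paper proof'' for your argument to parallel.

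The converse direction, however, contains a genuine gap that amounts to circularity. Your linear algebra (after a Zariski-lemma type step you do not actually carry out, e.g.\ splitting $(f)=P-N$ into effective parts and showing each is numerically trivial) only yields that $S$ carries a connected configuration of curves whose intersection matrix is negative semi-definite and degenerate. Identifying that configuration as a cycle of rational curves already requires the Enoki--Nakamura structure theory of curves on class ${\rm VII}_0$ surfaces, and your ``fact (ii)'' is false as stated: minimal class ${\rm VII}_0^+$ surfaces can contain elliptic curves (parabolic Inoue surfaces do). More seriously, ``contains a cycle of rational curves $D$ with $D^2=0$'' is \emph{not} the definition of an Enoki surface. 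In this paper, and in \cite{Enoki,DK98}, Enoki surfaces are the explicitly constructed surfaces --- compactifications of affine line bundles over an elliptic curve by a cycle of $b_2(S)$ rational curves, with normal form \eqref{formenormale} --- and the assertion that any minimal class ${\rm VII}_0^+$ surface carrying such a divisor is one of these explicit surfaces is precisely Enoki's theorem, i.e.\ the non-trivial content of the statement you are asked to prove. Your proposal therefore assumes the hard implication rather than proving it; the concluding step $(f)=mD$, via the one-dimensional kernel of the cycle's intersection matrix, is fine only once that classification (and the list of curves on an Enoki surface) is granted.
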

By virtue of Lemma~\ref{blow-up}(b) and the remarks at the beginning of Section~\ref{s:complex-valued}, it follows that 
\begin{cor}\label{c:enoki} A compact complex surface $S$ whose minimal model  is in class ${\rm VII}_0^+$  is a blow-up of an Enoki surface if and only if $S$ admits a non-trivial  holomorphic line bundle $\cal L \in {\rm Pic}^0(S)$ with $H^0(S, \cal L)\neq \{0\}$. 
\end{cor}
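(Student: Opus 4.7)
The plan is to derive the corollary from Theorem~\ref{thm:enoki} by pushing the data down to, and pulling it up from, the minimal model. Fix a birational morphism $b\colon S\to S_0$ contracting all exceptional curves of $S$ onto its minimal model $S_0\in{\rm VII}_0^+$, written as a composition of elementary blow-downs $B_{x_i}$ so that Lemma~\ref{blow-up}(b) can be applied iteratively.

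For the ``if $S_0$ is Enoki'' direction, I would invoke Theorem~\ref{thm:enoki} to produce a non-trivial $\cal L_0\in{\rm Pic}^0(S_0)$ together with a meromorphic section $f$ of divisor $(f)=mD$, for some $m\in\bb Z$ and $D$ the cycle of rational curves of $S_0$. The integer $m$ must be non-zero: otherwise $f$ would be a nowhere-vanishing global holomorphic function on the compact surface $S_0$, hence a non-zero constant, forcing $\cal L_0$ to be holomorphically trivial. If $m>0$ the effective divisor $mD$ makes $f$ genuinely holomorphic, so $H^0(S_0,\cal L_0)\neq\{0\}$; if $m<0$, then $f^{-1}$ is a holomorphic section of $\cal L_0^{-1}$ of divisor $-mD\geq 0$, so after replacing $\cal L_0$ by $\cal L_0^{-1}$ we again have $H^0(S_0,\cal L_0)\neq\{0\}$. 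Pulling back by $b$ gives $\cal L:=b^*\cal L_0\in{\rm Pic}^0(S)$, non-trivial because $b_*b^*\cal L_0=\cal L_0$, and iterating Lemma~\ref{blow-up}(b) with $k=0$ yields $H^0(S,\cal L)\cong H^0(S_0,\cal L_0)\neq\{0\}$.

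For the converse I need the inverse operation: every non-trivial $\cal L\in{\rm Pic}^0(S)$ should descend to $S_0$. This is the one non-cosmetic step. I would argue that the pull-back $b^*\colon{\rm Pic}^0(S_0)\to{\rm Pic}^0(S)$ is an isomorphism, using Lemma~\ref{l:kodaira} (valid on both $S$ and $S_0$ since $b_1=1$) to reduce the question to the statement that a blow-up $B_x$ at a single point induces an isomorphism $H^1(S,\bb C^*)\to H^1(\hat S,\bb C^*)$, which holds because blowing up at a point does not change the fundamental group, and then intersecting with $\ker c_1$ to separate out the $\bb Z[E]$-factor in ${\rm Pic}(\hat S)=B_x^*\,{\rm Pic}(S)\oplus\bb Z[E]$. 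Granting this, any non-trivial $\cal L\in{\rm Pic}^0(S)$ is $b^*\cal L_0$ for a unique non-trivial $\cal L_0\in{\rm Pic}^0(S_0)$, and Lemma~\ref{blow-up}(b) gives $H^0(S_0,\cal L_0)\cong H^0(S,\cal L)\neq\{0\}$. A non-zero holomorphic section is in particular a non-trivial meromorphic section, so Theorem~\ref{thm:enoki} identifies $S_0$ as an Enoki surface.

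The main obstacle, as already indicated, is the ${\rm Pic}^0$-invariance of a blow-up; everything else is mechanical. If one wishes to bypass the topological argument via Lemma~\ref{l:kodaira}, one can alternatively work analytically: given $\cal L\in{\rm Pic}^0(S)$ represent it, by Lemma~\ref{l:kodaira}, as $\cal L_\a$ for a closed complex-valued $1$-form $\a$ on $S$, then note that $b^*\a$ is cohomologous to a pull-back of a closed $1$-form on $S_0$ (since $b$ is a $b_1$-isomorphism) and use the holomorphic-line-bundle extension of Lemma~\ref{blow-up}(b) recalled at the beginning of Section~\ref{s:complex-valued} to transfer the non-vanishing of $H^0$ across $b$.
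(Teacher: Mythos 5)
Your proposal is correct and follows essentially the same route as the paper, which disposes of the corollary in one line by combining Theorem~\ref{thm:enoki} with the blow-up invariance of $h^{0,0}(\cdot,\cal L)$ from Lemma~\ref{blow-up}(b), extended to all of ${\rm Pic}^0$ via Lemma~\ref{l:kodaira} and the remarks opening Section~\ref{s:complex-valued}. You have merely written out the details the paper leaves implicit (the identification ${\rm Pic}^0(S)\cong{\rm Pic}^0(S_0)$ under blow-up and the passage from the meromorphic section of divisor $mD$ in Theorem~\ref{thm:enoki} to a genuinely holomorphic section after possibly replacing $\cal L_0$ by $\cal L_0^{-1}$), and these details are handled correctly.
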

We recall the following general observation
\begin{lem} \label{dimformestordues} Let $S$ be a compact complex surface whose minimal model is in the class {\rm VII}$_0^+$. Then,  for any topologically trivial holomorphic  line bundle $\cal L$ 
 $${\rm dim}_{\bb C} H^0(S,\O^1\ot \cal L)\le 1.$$
 \end{lem}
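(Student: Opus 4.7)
The plan is to argue by contradiction via a wedge-product vanishing. Suppose $\dim_{\bb C} H^0(S, \Omega^1 \otimes \cal L) \ge 2$ and take two $\bb C$-linearly independent sections $\sigma_1, \sigma_2$. I would form the wedge
$$\sigma_1 \wedge \sigma_2 \;\in\; H^0\!\bigl(S,\;\textstyle{\wedge}^2(\Omega^1 \otimes \cal L)\bigr) \;=\; H^0\!\bigl(S,\,K_S \otimes \cal L^{\otimes 2}\bigr),$$
and show it vanishes identically. If $\cal L^{\otimes 2}$ is a non-trivial element of ${\rm Pic}^0(S)$, Lemma~\ref{l:vanishing} applied to $(\cal L^{\otimes 2})^*$ yields $H^0(S, K_S \otimes \cal L^{\otimes 2}) = 0$. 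If $\cal L^{\otimes 2}$ is trivial (i.e., $\cal L$ is $2$-torsion in ${\rm Pic}^0(S)$), the group becomes $H^0(S, K_S)$, which still vanishes because $S$ is in class ${\rm VII}$: pluricanonical sections pull back isomorphically under the blow-down map, and by hypothesis $H^0(K_{S_0}^{m}) = 0$ for all $m \ge 1$. Either way, $\sigma_1 \wedge \sigma_2 \equiv 0$.

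Next, I would pass to the saturation $\cal F \subset \Omega^1 \otimes \cal L$ of the coherent subsheaf generated by $\sigma_1, \sigma_2$. Since $\sigma_1 \wedge \sigma_2 = 0$, the generic rank of $\cal F$ is one; being a reflexive rank-one sheaf on the smooth surface $S$, $\cal F$ is a line bundle and it contains $\sigma_1, \sigma_2$ as global sections. Thus $h^0(S, \cal F) \ge 2$. The ratio $f := \sigma_1/\sigma_2$ is then a globally well-defined meromorphic function on $S$ (the $\cal L$-transition factors cancel in the quotient), and it is non-constant because $\sigma_1, \sigma_2$ are linearly independent over $\bb C$.

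To close the argument I would invoke the classical fact that every minimal complex surface $S_0$ in class ${\rm VII}_0^+$ has algebraic dimension $a(S_0) = 0$: indeed, $a(S_0) = 2$ would force $S_0$ to be Moishezon, hence K\"ahler, contradicting the odd first Betti number; and $a(S_0) = 1$ would yield an elliptic fibration on $S_0$, but by the Kodaira--Enriques classification the only minimal non-K\"ahler elliptic surfaces with $b_1 = 1$ are Hopf and secondary Kodaira surfaces, both of which satisfy $b_2 = 0$, which is incompatible with $S_0 \in {\rm VII}_0^+$. Since the algebraic dimension is preserved under blow-ups, $a(S) = 0$ as well, contradicting the non-constancy of $f$.

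The hardest step to justify cleanly is the last one, namely the assertion $a(S_0) = 0$ for minimal class ${\rm VII}_0^+$ surfaces; the remainder is essentially formal. The genuine analytic input — the vanishing of $H^0(S, K_S \otimes \cal L^{\otimes 2})$ for nontrivial $\cal L^{\otimes 2}$ — is precisely what Lemma~\ref{l:vanishing} provides, which is why the class ${\rm VII}_0^+$ hypothesis enters the statement.
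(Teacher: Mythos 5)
Your proposal is correct, and its backbone is the same as the paper's: take two independent sections, show $\sigma_1\wedge\sigma_2\in H^0(S,K_S\otimes \cal L^{2})$ vanishes using Lemma~\ref{l:vanishing}, deduce that the ratio is a global meromorphic function, and conclude from the absence of non-constant meromorphic functions on (blow-ups of) minimal class ${\rm VII}_0^+$ surfaces. The differences are in the implementation of two steps. First, you treat the torsion case $\cal L^{\otimes 2}\cong\cal O$ separately via $H^0(S,K_S)=0$; the paper applies Lemma~\ref{l:vanishing} in one line and leaves this case implicit, so your version is actually the more careful one. Second, where the paper spends most of its proof showing by hand that $f$ with $\beta_2=f\beta_1$ extends meromorphically across the zero locus of $\beta_1$ (factoriality of $\bb C\{z_1,z_2\}$, Hartogs, Levi extension, and the maximal divisor, after first reducing to the minimal model via Lemma~\ref{blow-up}(b)), you obtain global meromorphy of $\sigma_1/\sigma_2$ from the saturation argument: the saturated rank-one subsheaf is reflexive, hence a line bundle on the smooth surface, and a quotient of two sections of a line bundle is meromorphic. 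This is a clean and valid replacement for the paper's local analysis. Finally, for the last step the paper simply cites Kodaira for the constancy of meromorphic functions on such surfaces, while you sketch why $a(S_0)=0$; your sketch is acceptable (and, as you note, the only point needing classification input), though the appeal to the list of minimal non-K\"ahler elliptic surfaces with $b_1=1$ can be replaced by the more self-contained observation that a minimal elliptic surface has $K^2=0$, whereas a minimal class ${\rm VII}_0^+$ surface has $K^2=-b_2<0$, and that algebraic dimension is a bimeromorphic invariant, so it passes to the blow-ups exactly as you say.
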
 
\begin{proof} By Lemma~\ref{blow-up}(b) and the remarks at the beginning of Section~\ref{s:complex-valued} (and since the fundamental group does not change under blow-down),  we can assume without loss of generality that $S=S_0$ is a {\it minimal} complex surface in the class ${\rm VII}^+_0$.  Let  $\beta_i\in H^0(S,\O^1\ot \cal L)$, $i=1,2,$ be two non-trivial  holomorphic 1-forms with values in $\cal L$.  It is clear (for instance by thinking of $\beta_i$ as smooth $(1,0)$-forms satisfying $\bar\partial_{-\a} \beta_i=0$, see \eqref{dolbeault-isom} and Section~\ref{s:complex-valued}, and Lemma~\ref{l:kodaira}) that $\beta_1\wedge \beta_2 \in  H^0(S, \O^2\otimes \cal L^2)= H^0(S,K_S\ot\cal L^2)$. By Lemma \ref{l:vanishing}, $\beta_1\w \beta_2 \equiv 0$. Letting $A \subset S$  be the vanishing locus of $\beta_1$, we thus have on $S\setminus A$,
 $$\beta_2=f\beta_1, $$ where $f$ is a holomorphic function defined on  $S\setminus  A$. We claim that $f$ extends as a meromorphic function over $A$, i.e. on $S$.

Let $D_{\rm max}$ be the {\it maximal divisor} of $S$ (see e.g. \cite{Nakamura}). As $A$ is an analytic subset of $S$, it is composed of curves contained in $D_{\rm max}$,  and of isolated points.  By Hartogs' extension  theorem, $f$ extends  holomorphically over the isolated points of $A$,  so we consider a point  $p\in A$  which belongs to an irreducible component $D_0$ of $D_{\rm max}$ with $D_0 \subset A$.  Let $U$ be an open neighbourhood of $p$ over which both vector bundles $\O^1$ and $\cal L$ trivialize. Since $\bb C\{z_1,z_2\}$ is a factorial ring,  we can write (with respect to  holomorphic coordinates $z=(z_1, z_2)$ on $U$)  $$\beta_i=\mu_i(z)(a_i(z)dz_1+b_i(z)dz_2),  i=1,2,$$ where $\mu_i(z), a_i(z), b_i(z)$ are holomorphic functions such that  the codimension of the vanishing locus  $Z(a_i,b_i)$ of  $a_i$ and $b_i$ is  $2$.  Thus, $Z(a_1,b_1)$ consists of isolated points in $U$. Avoiding theses points, at least one of the coefficients $a_1,b_1$ does not vanish at $p$, say $ a_1(p)\neq 0$. We then have (in a neighbourhood of $p$)
$$\beta_1\w \beta_2=\mu_1(z)\mu_2(z)\bigl(a_1(z)b_2(z)-a_2(z)b_1(z)\bigr)dz_1\w dz_2\equiv 0$$
hence $a_1b_2-a_2b_1\equiv 0$,  i.e. $\beta_2= \frac{\mu_2a_2}{\mu_1a_1}\beta_1$ where $\frac{\mu_2a_2}{\mu_1a_1}$ is a meromorphic function on $U$ which extends $f$ over $p$. It thus follows that $f$ extends meromorphically over $U$  minus the isolated points $Z(a_1,b_1)$, hence on $U$ (by  Levi's extension theorem). Thus, $f$ extends meromorphically on $S$. Since any meromorphic function on $S$ is constant (see \cite{kodaira}),   we conclude that ${\dim}_{\bb C}H^0(S,\O^1\ot\cal L)=1$. \end{proof}

Another feature of the Enoki surfaces is given by the following
\begin{lem}\label{enoki} Let $S$ be an Enoki surface. Then, for any non-trivial holomorphic line bundle $\cal L \in {\rm Pic}^0(S)$   $${\rm dim}_{\bb C} H^0(S, \O^1\otimes \cal L)={\rm dim}_{\bb C} H^0(S, \cal L)\le 1.$$ 
Moreover,   the equality ${\rm dim}_{\bb C} H^0(S, \O^1\otimes \cal L)={\rm dim}_{\bb C} H^0(S, \cal L)=1$  holds if and only if $\cal L = m[D], \ m \in \bb N^*,$ where $D$ is  the cycle of rational curves of $S$
\end{lem}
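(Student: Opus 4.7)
The strategy is to bound both dimensions by $1$ using Lemma~\ref{dimformestordues} and an elementary meromorphic-function argument, to construct an injection $H^0(S, \cal L) \hookrightarrow H^0(S, \Omega^1 \otimes \cal L)$ via the twisted exterior derivative, and finally to upgrade this injection to an equality using the special structure of Enoki surfaces.

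\emph{Upper bounds and the injection.} By Lemma~\ref{blow-up}(b) and the remarks opening Section~\ref{s:complex-valued}, I may assume $S$ is minimal. If $s_1, s_2 \in H^0(S, \cal L)$ are linearly independent, then $s_1/s_2$ is a non-constant meromorphic function on $S$, impossible on a class ${\rm VII}$ surface by Kodaira's theorem, hence ${\rm dim}_{\bb C} H^0(S, \cal L) \le 1$; together with Lemma~\ref{dimformestordues}, both dimensions are at most one. Using Lemma~\ref{l:kodaira}, I write $\cal L = \cal L_{-\a}$ for a closed complex-valued $1$-form $\a$, so that a section $s \in H^0(S, \cal L)$ corresponds to a smooth function with $\bar\partial_\a s = 0$; then $\Psi(s) := \partial_\a s$ is $\bar\partial_\a$-closed (since $\bar\partial_\a \partial_\a s = -\partial_\a \bar\partial_\a s = 0$, using $d_\a^2 = 0$ as $\a$ is closed), so it defines an element of $H^0(S, \Omega^1 \otimes \cal L)$. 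Injectivity: $\Psi(s) = 0$ and $\bar\partial_\a s = 0$ together yield $d_\a s = 0$, and the non-trivial flat line bundle $\cal L$ admits no non-zero flat section. Consequently ${\rm dim}_{\bb C} H^0(S, \cal L) \le {\rm dim}_{\bb C} H^0(S, \Omega^1 \otimes \cal L) \le 1$.

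\emph{Characterization of when $H^0(S, \cal L) \ne 0$.} A nonzero $s \in H^0(S, \cal L)$ has effective divisor $(s)$ supported on the components $D_1, \ldots, D_n$ of the cycle $D$ (these being the only curves on the minimal Enoki surface $S$), so $(s) = \sum m_i D_i$ with $m_i \ge 0$. Topological triviality of $\cal L$ forces $\sum m_i c_1([D_i]) = 0$ in $H^2(S, \bb Z)$; by Nakamura's result that the Enoki case is exactly the one in which the $c_1([D_i])$ satisfy the single relation $\sum c_1([D_i]) = 0$ and are otherwise $\bb Z$-linearly independent, all the $m_i$ must coincide, so $\cal L \cong [mD]$ with $m \ge 1$. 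Conversely, the $m$-th power of the canonical section of $[D]$ is a non-zero element of $H^0(S, [mD])$.

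\emph{Closing the chain and the main obstacle.} By the previous step it now suffices to show that $H^0(S, \Omega^1 \otimes \cal L) \ne 0$ implies $\cal L \cong [mD]$ for some $m \ge 1$. For a nonzero $\beta \in H^0(S, \Omega^1 \otimes \cal L)$, Lemma~\ref{l:vanishing} gives $\partial_\a \beta \in H^0(S, K_S \otimes \cal L) = \{0\}$, hence $d_\a \beta = 0$. I would then use that every Enoki surface admits a non-trivial global holomorphic vector field $V$ generating the $\bb C^*$-action underlying its construction: the contraction $\iota_V \beta$ is a holomorphic section of $\cal L$, and if it does not vanish identically we are done. Otherwise $\beta$ annihilates the foliation $\cal F$ generated by $V$, and $\cal L$ is identified (up to a divisor supported on the zeros of $V$) with the normal bundle of $\cal F$, which by the explicit structure of Enoki surfaces as compactifications of $\bb C^*$-principal bundles is of the form $[mD]$ with $m \ge 1$. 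The main obstacle is precisely this last identification, for which one needs either the explicit construction of Enoki surfaces or an appeal to Brunella's classification of holomorphic foliations on class ${\rm VII}$ surfaces.
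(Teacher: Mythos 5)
Your first two steps are essentially sound: the bound ${\rm dim}_{\bb C}H^0(S,\cal L)\le 1$ via the absence of non-constant meromorphic functions, the bound on $H^0(S,\O^1\ot\cal L)$ via Lemma~\ref{dimformestordues}, and the injection $s\mapsto d_{-\a}s$ (which is the map of Lemma~\ref{2isoms}(b)) are all fine. Two remarks there: the identification of the line bundles $\cal L$ with $H^0(S,\cal L)\neq\{0\}$ is already the content of Theorem~\ref{thm:enoki} (a holomorphic section is a meromorphic section, its divisor is $mD$ with $m\ge 1$), so your detour through the intersection classes of the components $D_i$ is unnecessary; moreover, as stated it is inaccurate, since a parabolic Inoue surface is an Enoki surface containing an elliptic curve in addition to the cycle $D$, so the components of $D$ are \emph{not} the only curves in general (the argument can be repaired because the elliptic curve has nonzero self-intersection and is disjoint from $D$, but quoting Theorem~\ref{thm:enoki} is cleaner).

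The genuine gap is in your last step, which is exactly the direction the lemma needs: $H^0(S,\O^1\ot\cal L)\neq\{0\}\Rightarrow \cal L\cong[mD]$. Your argument rests on the claim that every Enoki surface carries a non-trivial global holomorphic vector field generating a ``$\bb C^*$-action underlying its construction''. This is false: a generic Enoki surface is a compactification of a genuinely \emph{affine} line bundle over an elliptic curve (not a $\bb C^*$-principal bundle), it admits no $\bb C^*$-action, and in fact has no non-zero global holomorphic vector field at all (only the parabolic Inoue surfaces, i.e.\ the germs \eqref{formenormale} with all $\a_i=0$, do; for $\a_i\neq 0$ one checks on the normal form that no germ vector field is $F$-invariant). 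What every Enoki surface does carry is a \emph{twisted} object, and this is precisely how the paper's proof proceeds: from \eqref{formenormale} the form $dz_2/z_2$ is a global meromorphic $1$-form with a simple pole along $D$, i.e.\ $\b_0=dz_2$ is a nowhere-vanishing-along-$D$ element of $H^0(S,\O^1\ot[D])$. Given $0\neq\b\in H^0(S,\O^1\ot\cal L)$, Lemma~\ref{l:vanishing} forces $\b\wedge\b_0=0$ in $H^0(S,K_S\ot\cal L\ot[D])$, so $\b=f\b_0$ for a meromorphic section $f$ of $\cal L\ot[D]^{-1}$; Theorem~\ref{thm:enoki} gives $(f)=pD$, and holomorphy of $\b$ together with the non-vanishing of $\b_0$ along $D$ gives $p\ge 0$, whence $\cal L=[(p+1)D]$ and $H^0(S,\cal L)\neq\{0\}$. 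Your alternative ending (contract with $V$, or identify $\cal L$ with the normal bundle of the foliation) both uses a non-existent $V$ and, in the foliation branch, leaves unproved exactly the identification you flag as ``the main obstacle''; so as written the proof does not close.
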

\begin{proof}  Since there is no non-trivial meromorphic functions, ${\rm dim}_{\bb C} H^0(S,\cal L)\le 1$ for any line bundle $\cal L\in {\rm Pic}(S)$. In \cite{Enoki}, the Enoki surfaces are obtained as compactifications of affine line bundles by a cycle $D=\sum_{i=0}^{n-1}C_i$  of $n=b_2(S)$ rational curves. Theorem~\ref{thm:enoki} then shows that ${\rm dim}_{\bb C} H^0(S, \cal L) =1$ if and only if $\cal L = m[D]$ with $m \in \bb N^*$.

By Lemma \ref{dimformestordues},  we also have ${\rm dim}_{\bb C} H^0(S,\O^1\ot \cal L)\le 1$ for any $\cal L\in {\rm Pic}^0(S)$. Enoki surfaces  can be  also described (see \cite[Thm.~1.19]{DK98}) by a polynomial germ of the form 
\begin{equation}\label{formenormale} F(z_1,z_2)=( z_1z_2^nt^n+\sum_{i=0}^{n-1} \a_it^{i+1}z_2^{i+1}, tz_2), \quad 0<|t|<1.
\end{equation}
In terms of \eqref{formenormale}, the maximal divisor $D$ has  local equation $z_2=0$. It follows from \eqref{formenormale}  that $\frac{dz_2}{z_2}$ is a meromorphic  $(1,0)$-form on $S$,  which has a pole of order $1$ along $D$, or equivalently, $\beta_0:= dz_2$ is a holomorphic $(1,0)$-form with values in $\cal L_0=[D]$ on $S$, showing that ${\rm dim}_{\bb C} H^0(S,\O^1\ot \cal L_0^m)={\rm dim}_{\bb C} H^0(S,\cal L_0^m)=1$ for any $m \in \bb N^*$. 

Let $\cal L \in {\rm Pic}^0(S)$ be such that ${\rm dim}_{\bb C} H^0(S,\O^1\ot \cal L)=1$,  and $\b\neq 0\in H^0(S,\O^1\ot \cal L)$. A similar argument as the one used in the proof of Lemma \ref{dimformestordues} shows that there is a meromorphic section $f$ of $\cal L\ot \cal L_0^{-1},$ such that $\b=f\b_0$. It thus follows from Theorem~\ref{thm:enoki} that $(f)=pD$, $p \in \bb Z$. Since $\b$ is a holomorphic form (and $\b_0$ does not  vanish  along $D$) we conclude that $p\in \bb N$, so that $\cal L=[pD]\ot [D]=[mD]$ with $m=p+1\in\bb N^\star$, i.e. ${\rm dim}_{\bb C} H^0(S, \cal L)=1$. \end{proof}

Our main objective, which will occupy the remainder of the section, is establishing the following partial converse of Lemma~\ref{enoki}, which characterizes Enoki surfaces by the existence of a special type of singular holomorphic foliation:
\begin{thm}\label{thm:Enoki-characterization} Let $S$ be a compact complex surface whose minimal model $S_0$ is in class ${\rm VII}_0^+$,  and whose fundamental group is isomorphic to $\mathbb Z$. Then, for any non-trivial holomorphic line bundle  $\cal L\in {\rm Pic}^0(S)$ $${\rm dim}_{\bb C} H^0(S,\O^1\ot\cal L)={\rm dim}_{\bb C} H^0(S,\cal L)\le 1$$ with ${\rm dim}_{\bb C} H^0(S,\O^1\ot\cal L)= 1$ if and only if $S_0$ is an Enoki surface.
\end{thm}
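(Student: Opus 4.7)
The plan is to deduce the theorem from a single key construction: given a non-zero $\b\in H^0(S,\O^1\ot\cal L)$ for some non-trivial $\cal L\in{\rm Pic}^0(S)$, produce a non-zero holomorphic section of $\cal L$ itself. Once this is established, Corollary~\ref{c:enoki} forces $S_0$ to be Enoki, and the asserted equality of dimensions follows: in the Enoki case from Lemma~\ref{enoki} combined with Lemma~\ref{blow-up}(b) (which extends the identification to blow-ups), and in the non-Enoki case from the contrapositive, since Corollary~\ref{c:enoki} then gives $H^0(S,\cal L)=0$ and hence $H^0(S,\O^1\ot\cal L)=0$. The upper bound ${\rm dim}_{\bb C}H^0(S,\O^1\ot\cal L)\le 1$ is already provided by Lemma~\ref{dimformestordues}, and the ``if'' direction of the biconditional follows from Lemma~\ref{enoki} applied to $\cal L_0=[D]$ on the Enoki model $S_0$, pulled back via Lemma~\ref{blow-up}(b).

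For the key construction, I would use Lemma~\ref{l:kodaira} and the discussion at the start of Section~\ref{s:complex-valued} to write $\cal L=\cal L_\a$ for a closed complex-valued $1$-form $\a$, and view $\b$ as a smooth $(1,0)$-form with $\bpart_{-\a}\b=0$. The first step is to upgrade this to $d_{-\a}\b=0$: since $d\a=0$ gives $d_{-\a}^2=0$, the $(2,0)$-part $\part_{-\a}\b=\part\b-\a^{1,0}\w\b$ is itself $\bpart_{-\a}$-closed and represents an element of $H^0(S,K_S\ot\cal L_\a)$, which vanishes by Lemma~\ref{l:vanishing} applied to $\cal L^*$ in place of $\cal L$. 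Next, using $\pi_1(S)\cong\bb Z$, I would lift to the universal cover $\pi:\tilde S\to S$, which is simply connected. Write $\pi^*\a=d\tilde f$ with $\tilde f\circ\gamma=\tilde f+c$, where $\gamma$ generates the deck group; the monodromy of $\cal L$ around $\gamma$ equals $e^{-c}$, and the non-triviality of $\cal L$ in ${\rm Pic}^0(S)$ combined with Lemma~\ref{l:kodaira} forces $e^{-c}\ne 1$. The form $\tilde\b:=e^{-\tilde f}\pi^*\b$ is then a $d$-closed $(1,0)$-form, hence a closed holomorphic $1$-form on $\tilde S$, transforming by $\gamma^*\tilde\b=e^{-c}\tilde\b$.

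Since $\tilde S$ is simply connected, $H^1(\tilde S,\bb C)=0$, so $\tilde\b=d\tilde g$ for a holomorphic function $\tilde g$ on $\tilde S$, and the transformation law yields $\tilde g\circ\gamma=e^{-c}\tilde g+b$ for some constant $b\in\bb C$. Using $e^{-c}\ne 1$, the function $\tilde h:=\tilde g-b/(1-e^{-c})$ satisfies $\tilde h\circ\gamma=e^{-c}\tilde h$; it is not identically zero, since otherwise $\tilde g$ would be constant, forcing $\tilde\b=0$ and hence $\b=0$. Because $e^{-c}$ is precisely the monodromy of $\cal L$, the function $\tilde h$ descends to a non-zero holomorphic section of $\cal L$ on $S$, completing the construction. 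The main obstacle is exactly this last descent step, which uses $\pi_1(S)\cong\bb Z$ in an essential way: it reduces the obstruction to integrating the $\gamma$-cocycle $b$ to a single scalar equation solvable thanks to $e^{-c}\neq 1$; for fundamental groups of higher rank the corresponding group $1$-cocycle need not be a coboundary, so the construction of a global holomorphic section of $\cal L$ on $S$ would fail.
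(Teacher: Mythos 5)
Your proposal is correct and is essentially the paper's own argument: after the same reductions (Lemma~\ref{blow-up}(b), Lemma~\ref{l:kodaira}, the vanishing $H^0(S,K_S\ot\cal L)=0$ to upgrade $\bpart_{-\a}\b=0$ to $d_{-\a}\b=0$), the paper performs exactly your key construction on the infinite cyclic (universal) cover --- integrate the untwisted closed holomorphic $1$-form, normalize by a constant to get a $\g$-equivariant holomorphic function, and descend it to a section of $\cal L$ --- merely packaging it as a contradiction in the case $H^0(S,\cal L)=0$ rather than as a direct implication feeding into Corollary~\ref{c:enoki}. The only blemish is a harmless sign slip (with the convention $\bpart_{-\a}\b=0$, i.e. $d_{-\a}=d+\a\w$, the closed form is $e^{+\tilde f}\pi^*\b$, with monodromy factor $e^{c}$ rather than $e^{-c}$), and in fact your remark that non-triviality of $\cal L$ forces $e^{\pm c}\neq 1$ (not merely $c\neq 0$), via Lemma~\ref{l:kodaira}, is precisely the justification needed for dividing by $e^{C}-1$, a point stated slightly less carefully in the paper.
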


Recall that, by Lemma~\ref{l:kodaira}, on a class ${\rm VII}$ surface $S$ any $\cal L\in {\rm Pic}^0(S)$ can be written as  $\cal L = \cal L_{\a}$ for some closed complex-valued $1$-form $\a$. We then have

\begin{lem} \label{2isoms} Let $\cal L = \cal L_{\a}\in {\rm Pic}^0(S)$ be a non-trivial holomorphic line bundle on a complex surface $S$ with minimal model in the class ${\rm VII}^+_0$. Then, the following isomorphisms hold true.
\begin{itemize}
\item[\rm (a)] If $H^0(S,\cal L)=0$, then $H^0(S,\O^1\ot\cal L)\cong  H_{-\a}^1(S,\bb C).$
\item[\rm (b)]  If $H^0(S,\cal L)\neq 0$, then $H^0(S,\cal L)\stackrel{d}{\cong} H^0(S,\O^1\ot\cal L).$
\end{itemize}
\end{lem}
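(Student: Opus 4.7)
The plan is to use the Dolbeault identifications of Section~\ref{s:complex-valued}, under which elements of $H^0(S,\cal L)$ correspond to smooth functions $f$ with $\bar\partial_{-\a}f=0$ and elements of $H^0(S,\O^1\otimes\cal L)$ correspond to $(1,0)$-forms $\beta$ with $\bar\partial_{-\a}\beta=0$. A common input to both parts is the vanishing $H^0(S,K_S\otimes\cal L)=0$, obtained by applying Lemma~\ref{l:vanishing} to the non-trivial bundle $\cal L^*\in{\rm Pic}^0(S)$, which gives $H^2(S,\cal L^*)\cong H^0(S,K_S\otimes\cal L)=0$.

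For part (a), I would define
\[\Phi: H^0(S,\O^1\otimes\cal L)\longrightarrow H^1_{-\a}(S,\bb C),\qquad \beta\longmapsto[\beta].\]
Well-definedness: the identity $\bar\partial_{-\a}\partial_{-\a}+\partial_{-\a}\bar\partial_{-\a}=0$ (coming from $d_{-\a}^2=0$ and type decomposition) shows that the $(2,0)$-form $\partial_{-\a}\beta$ is $\bar\partial_{-\a}$-closed, hence defines an element of $H^0(S,K_S\otimes\cal L)=0$; so $d_{-\a}\beta=0$. Injectivity follows at once from the hypothesis $H^0(S,\cal L)=0$: if $\beta=d_{-\a}h$ with $\beta$ pure $(1,0)$, the $(0,1)$-type forces $\bar\partial_{-\a}h=0$, hence $h=0$. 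Surjectivity is the substantive step and would require $H^1(S,\cal L)=0$: on a class VII surface with $b_1(S)=1$ one has $h^{0,1}(S)=1$ and $h^{0,2}(S)=0$, whence $\chi(\cal O_S)=0$, and since $c_1(\cal L)=0$ the Riemann--Roch formula gives $\chi(\cal L)=\chi(\cal O_S)=0$; combined with the hypothesis $h^0(\cal L)=0$ and with Lemma~\ref{l:vanishing} giving $h^2(\cal L)=0$, this forces $h^1(\cal L)=0$. Then, decomposing a $d_{-\a}$-closed 1-form $\omega=\omega^{1,0}+\omega^{0,1}$, the $(0,2)$-piece of $d_{-\a}\omega=0$ yields $\bar\partial_{-\a}\omega^{0,1}=0$, so $\omega^{0,1}=\bar\partial_{-\a}h$ for some smooth $h$, and $\beta:=\omega-d_{-\a}h$ is a pure $(1,0)$-representative of $[\omega]$ belonging to $H^0(S,\O^1\otimes\cal L)$.

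For part (b), the natural candidate is
\[d_{-\a}: H^0(S,\cal L)\longrightarrow H^0(S,\O^1\otimes\cal L),\qquad f\longmapsto\partial_{-\a}f,\]
well-defined as in (a). The delicate step is injectivity: if $d_{-\a}f=0$ with $f\not\equiv 0$, then in each chart $U_i$ where $\a=df_i$ the local expression $f=c_i e^{-f_i}$ (with $c_i\in\bb C$ a constant) is, by connectedness of $S$, forced to be nowhere vanishing, producing a holomorphic trivialisation $\cal L\cong\cal O_S$ and contradicting the non-triviality of $\cal L$ in ${\rm Pic}^0(S)$. To conclude, one invokes the upper bounds ${\rm dim}_{\bb C} H^0(S,\O^1\otimes\cal L)\le 1$ from Lemma~\ref{dimformestordues} and ${\rm dim}_{\bb C} H^0(S,\cal L)\le 1$ (the quotient of two nonzero sections is a meromorphic function, which is constant since $S$ has algebraic dimension zero): an injective map from a one-dimensional source into a target of dimension at most one must be an isomorphism.

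The principal obstacle I anticipate is the surjectivity in (a), which rests on the vanishing $H^1(S,\cal L)=0$. Unlike the vanishing of $H^0(S,K_S\otimes\cal L)$, this is not a direct consequence of Lemma~\ref{l:vanishing} and requires combining it with a Riemann--Roch computation that exploits the specific Hodge numbers of a $b_1=1$ surface in class VII.
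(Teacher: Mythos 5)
Your proposal is correct and follows essentially the same route as the paper: the same maps $\beta\mapsto[\beta]$ and $\varphi\mapsto d_{-\a}\varphi$, well-definedness via $H^0(S,K_S\otimes\cal L)=\{0\}$ from Lemma~\ref{l:vanishing}, surjectivity in (a) from $H^1(S,\cal L)=\{0\}$ obtained by Riemann--Roch together with the vanishing of $H^0$ and $H^2$, and the dimension count via Lemma~\ref{dimformestordues} in (b). The only (harmless) variation is your injectivity argument in (b), which analyzes the parallel section locally and derives a holomorphic trivialization of $\cal L$ instead of quoting the vanishing \eqref{vanishing-0}; this is if anything slightly more careful in the complex-valued setting.
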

\begin{proof}   (a) In view of the identification \eqref{dolbeault-isom} (see also Section~\ref{s:complex-valued}),   we are going to construct  an isomorphism $s: H^{1,0}_{\bar \partial_{-\a}}(S, \bb C) \to H^1_{-\a}(S, \bb C)$.  Let  $\beta$ be a $(1,0)$-form  satisfying $\bar \partial_{-\a} \beta =0$.  Then, the $(2,0)$-form $\partial_{-\a} \beta $ satisfies $\bar\partial_{-\a} (\partial_{-\a} \beta)=0$ and,  therefore,  $\partial_{-\a} \beta =0$ since $H^{2,0}_{\bar \partial_{-\a}}(S, \bb C) \cong H^0(S, \O^2\otimes \cal L)=H^0(S, K_S\otimes \cal L)=\{0\}$ by Lemma~\ref{l:vanishing}. As $d_{-\a} \b = (\partial_{-\a} + \bar \partial_{-\a})(\b)=0$, we thus have  a natural  map
$$\begin{array}{cccc}
s:&H^{1,0}_{\bar\part_{-\a}}(S, \bb C)&\to& H^1_{-\a}(S,\bb C)\\
&&\\
&\b &\mapsto&[\b].
\end{array}$$
It is easy to see that $s$ is injective when $H^0(S, \cal L)=\{0\}$. Indeed, let $\b$ be a $\bar\partial_{-\a}$-closed $(1,0)$-form such that $[\b]=0$ in $H^1_{-\a}(S, \bb C)$. This means that there exists a complex-valued smooth  function $\varphi$ on $S$ with  $\b=d_{-\a} \varphi$. Considering bi-degree, it follows that  $\bar\part_{-\a} \varphi =0$, i.e.  $\varphi $ defines a section in $H^0_{\bar \partial_{-\a}}(S, \bb C) \cong H^0(S,\cal L)$, thus $\varphi =0$ and $\b=0$. 

To prove the surjectivity of $s$, let $\theta$ be a complex-valued $d_{-\a}$-closed $1$-form on $S$ with $[\theta] \neq 0 \in H^1_{-\a}(S, \bb C)$. Then the $(0,1)$-part $\theta^{0,1}$ of $\theta$ satisfies $\bar \partial_{-\a} \theta^{0,1} =0$, i.e. $\theta^{0,1}$ defines a class in $H_{\bar \partial_{-\a}}^{0,1}(S, \cal L) \cong H^1(S, \cal L)$. Using $H^0(S,\cal L)=\{0\}$,  Lemma~\ref{l:vanishing} and Riemann--Roch, we have $H^1(S, \cal L)=\{0\}$ which shows that $\theta^{0,1}= \bar \partial_{-\a} \varphi$ for some complex-valued smooth function $\varphi$. Thus, the $1$-form $\tilde \theta := \theta - d_{-\a} \varphi$ is another representative of $[\theta]$,  which is of type $(1,0)$. It thus follows that $\tilde \theta \in H^{1,0}_{\bar\part_{-\a}}(S, \bb C)$ and $s(\tilde \theta) = [\tilde \theta] = [\theta]$.

\smallskip
(b) Again, using the identifications \eqref{dolbeault-isom},  it is enough to  show that  the natural morphism  $$d_{-\a} : H^0_{\bar \partial_{-\a}}(S, \bb C) \to H^{1,0}_{\bar \partial_{-\a}}(S, \bb C)$$ is an isomorphism.  For any smooth complex-valued function $\varphi$ satisfying $\bar \partial_{-\a} \varphi =0$, the operator $d_{-\a}$ associates the $(1,0)$ form $\beta:=d_{-\a} \varphi$. As  $\beta$  is $d_{-\a}$-closed, it also satisfies $\bar \partial_{-\a} \beta =0$. The map $d_{-\a} : H^0_{\bar \partial_{-\a}}(S, \bb C) \to H^{1,0}_{\bar \partial_{-\a}}(S, \bb C)$ is injective as $H^0_{-\a}(S, \bb C) = \{0\}$ for $[\a] \neq 0 \in H^1_{dR}(S, \bb C)$, see \eqref{vanishing-0}.  It is surjective because  $H^{1,0}_{\bar \partial_{-\a}}(S, \bb C) \cong H^0(S, \O^1\otimes \cal L)$ must be  $1$-dimensional by Lemma~\ref{dimformestordues}. \end{proof}

\begin{rem}\label{sheaves} The isomorphisms in Lemma~\ref{2isoms} can be alternatively derived from the following exact sequences of sheaves 
\begin{equation}\label{suite1} 0\to \bb C(\cal L)\to \cal O(\cal L)\stackrel{d}{\to} d\cal O(\cal L)\to 0, \end{equation} 
\begin{equation}\label{suite2} 0\to d\cal O(\cal L)\to \O^1\ot\cal L\stackrel{d}{\to}\O^2\ot\cal L\to 0, \end{equation}
where,  for a flat holomorphic line bundle $\cal L \in H^1(S, \bb C^*)$,  $\bb C(\cal L)$ denotes  the sheaf of local  parallel sections of $\cal L,$ and $d$ is the deRham differential defined on smooth forms with values in $\cal L$ by using the flat connection on $\cal L$.   By the long exact sequence of cohomologies associated \eqref{suite2} and Lemma~\ref{l:vanishing} we deduce an isomorphism
$$i: H^0(S, d{\cal O}(\cal L))  \cong H^0(S, \O^1\otimes \cal L).$$
The two isomorphisms appearing in Lemma~\ref{2isoms} are then the natural maps $s= i\circ \delta$ and $d$ in the long cohomology sequence associated to \eqref{suite1}
\begin{equation*}\label{suite3}
\begin{diagram}
0 & \rTo & H^0(S, \bb C(\cal L))  & \rTo  & H^0(S, \cal L)  &  \rTo^d     & H^0(S, d\cal O(\cal L))       & \rTo^{\delta}  & H^1(S, \bb C(\cal L)) & \rTo    & H^1(S, \cal L)  & \rTo &           \\
    &          &                                    &             &                           &              &             \dEq^i                           &            &                                     &             &                            &        &            \\                      
    &          &                                    &            &                            &               &  H^0(S, \O^1\otimes \cal L) &             &                                     &             &                            &        &            
\end{diagram}
\end{equation*}
One then deduces (a)  from the vanishing of $H^0(S, \cal L)$ and $H^1(S, \cal L)$ (using Lemma~\ref{l:vanishing} and Riemann-Roch) and the fact that  if $\cal L= \cal L_{\a}$ for a closed complex-valued $1$-form $\a$,  the deRham-Weil theorem gives   
\begin{equation*}\label{complex-isom}
H^k(S, \bb C(\cal L)) \cong H^k_{-\a}(S, \bb C).
\end{equation*}
Similarly, using that $H^0(S, \bb C(\cal L))=\{0\}$  for a non-trivial  holomorphic line bundle $\cal L$  (as a parallel section of $\cal L$ is either identically zero or never vanishes),  (b) 
follows from the injectivity of $d$ and Lemma~\ref{dimformestordues}. \hfill$\Box$ \end{rem} 

\smallskip
\noindent
{\it Proof of Theorem~\ref{thm:Enoki-characterization}.} Again, using Lemma~\ref{blow-up}(b), we can assume without loss of generality that $S=S_0$ is a {\it minimal} complex surface, i.e. it is in the  class ${\rm VII}_0^+$.  As we have already observed, then $S$ does not admit non-constant meromorphic functions, thus for any  $\cal L \in {\rm Pic}^0(S)$ we have ${\rm dim}_{\bb C} \ H^0(S, \cal L) = 0,1$. 

\smallskip
\noindent
{\it Case 1}: There exists a non-trivial $\cal L \in {\rm Pic}^0(S)$,  such that  ${\rm dim}_{\bb C} \ H^0(S,\cal L)=1$. By Theorem~\ref{thm:enoki}, $S$ is  an Enoki surface so that Theorem~\ref{thm:Enoki-characterization} follows from Lemma~\ref{enoki}.

\smallskip
\noindent
{\it Case 2}:   For any non-trivial $\cal L \in {\rm Pic}^0(S)$, ${\rm dim}_{\bb C} H^0(S,\cal L)=0$.  We thus need to show that in this case $H^0(S, \O^1\otimes \cal L) =\{0\}$ for any non-trivial $\cal L \in {\rm Pic}^0(S)$. 

Suppose for contradiction that $H^0(S,\O^1\ot \cal L)\neq \{0\}$.  Let $\a$ be a closed {complex valued} $1$-form $\a$ such that  $\cal L = \cal L_{\a}$, so we have $H^0(S,\O^1\ot\cal L)\cong H^{1,0}_{\bar\part_{-\a}}(S, \bb C)$,  see Sect.~\ref{s:preliminaries}. Let $\b \neq 0$ be a $(1,0)$-form on $S$ such that $\bar \partial_{-\a} \b =0$. By Lemma~\ref{2isoms}, $\b$ satisfies
$d_{-\a} \b =0$. Let $p:\tilde S\to S$ be the universal covering space of $S$, $\tilde\a=p^*\a$, $\tilde\b=p^*\b$  and $\tilde \Phi :\tilde S\to \tilde S$ a biholomorphism such that $S= \tilde S/ \Gamma$ where $\Gamma \cong \bb Z$ is the infinite cyclic fundamental group of $S$ generated by $\tilde\Phi$. The forms $\tilde \a$ and $\tilde \b$ are invariant by the action of $\tilde \Phi$ as they are pull-backs of forms on $S$. As $\tilde \a$ is closed on $\tilde S$ (and $\tilde S$ is simply connected), there exists a complex-valued function $\tilde f$ on $\tilde S$,  such that $\tilde \a=d\tilde f$. The invariance of $\tilde \a$ under $\tilde \Phi$ then means $d(\tilde f\circ \tilde \Phi -\tilde f)=0$, so  there  exists  a constant $C\in\bb C$ such that 
\begin{equation}\label{1}
\tilde f\circ\tilde\Phi-\tilde f=C.
\end{equation}
Notice that the constant $C$ cannot be zero as then $f$ would be $\Gamma$-invariant and would descend to $S$ to define a primitive of $\a$ which contradicts the assumption that $\cal L = \cal L_{\a} \neq \cal O$.

As  $d(e^{\tilde f}\tilde\b)=e^{\tilde f}(d_{-\tilde\a}\tilde \b) =0$,  the $(1,0)$-form $e^{\tilde f}\tilde\b$ is closed  and therefore exact on $\tilde S$. Thus, there exists a complex-valued smooth  function $\tilde g$ on $\tilde S$,  such that $e^{\tilde f}\tilde\b=d\tilde g$. Considering bi-degree, $\bar\part \tilde g=0$ i.e. $\tilde g$ is holomorphic. Using that $\tilde\b$ is $\tilde \Phi$ invariant and \eqref{1}, it follows that
 $$\tilde \Phi ^* d\tilde g=e^{C}d\tilde g,$$
 i.e. there exists a constant $K\in \bb C$ such that
 \begin{equation*}\label{2}
 \tilde g\circ\tilde\Phi=e^{C}\tilde g+K. 
 \end{equation*}
Setting $\tilde h: =\tilde g+\frac{K}{e^{C}-1}$, we obtain a non-zero holomorphic function on $\tilde S$ satisfying $\tilde \Phi ^*\tilde h=e^{C}\tilde h$. Thus, $h:=e^{-\tilde f}\tilde h$ is a smooth complex-valued function on $\tilde S$ which is $\Gamma$-invariant and satisfies $\bar \partial_{-\tilde \a} h=0$. It follows that $h$ descends  to $S$ to define a non-zero section of  $H^0(S,\cal L)$, a contradiction.  \hfill $\Box$

\begin{rem}\label{cyclic-cover} There are no known examples of class ${\rm VII}_0^+$ surfaces whose fundamental group is not isomorphic to $\bb Z$. In general, as the first Betti number of any class ${\rm VII}^+_0$ surface $S$ is equal to $1$,   $S$ admits a unique infinite cyclic cover $\tilde S$. The arguments in the proof of Theorem~\ref{thm:Enoki-characterization} extend under the (a priori weaker) assumption $H^1(\tilde S, \bb R)=\{0\}$.  \hfill$\Box$
\end{rem}

\subsection{Proof of Theorem~\ref{thm:Z-vanishing}}

We recall  the following vanishing result obtained in \cite{leon}.
\begin{lem}\label{leon}\cite{leon} Let $S$ be a compact complex surface diffeomorphic to $(S^1\times S^3) \sharp\,n\overline{{\bb C} P^2}$, $n \in \bb N^*,$ and $L=L_\a, \ [\a]\neq 0 \in H^1_{dR}(S, \bb R)$ a non-trivial flat real line bundle. Then for $k\neq 2$, $H^k_{d_L}(S,L)=0$.
\end{lem}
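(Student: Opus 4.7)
\emph{Proof plan.}
Since $H^k_{d_L}(S, L)$ depends only on the underlying smooth manifold and the isomorphism class of the flat real bundle $L$ (determined by a character $\rho: \pi_1(S) = \bb Z \to \bb R^*_+$, which is non-trivial by the hypothesis $[\a]\neq 0$), I compute these groups topologically on the manifold $M:=(S^1 \times S^3) \sharp n\overline{\bb C P^2}$. The cases $k=0$ and $k=4$ are immediate from \eqref{vanishing-0}, and Poincar\'e duality \eqref{poincare}, combined with the non-triviality of $L^*$, reduces the remaining task to establishing $H^1_{d_L}(S, L)=0$ for every non-trivial flat real line bundle $L$ on $M$.

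The plan is to compute $H^1$ via a twisted Mayer--Vietoris decomposition $M=U\cup V$, where $U$ is obtained from $S^1 \times S^3$ by deleting the interiors of the $n$ open 4-balls used for the connected sum, and $V$ is the disjoint union of the $n$ copies of $\overline{\bb C P^2}$ minus a ball each. Then $U\cap V$ deformation-retracts onto $n$ disjoint 3-spheres. Elementary CW arguments yield the homotopy equivalences $U\simeq S^1 \vee n S^3$ (since the minimal CW structure on $S^1 \times S^3$ has a single 4-cell whose removal returns the 3-skeleton $S^1 \vee S^3$, and each additional ball-removal adds a wedge summand $S^3$), $V \simeq n S^2$ (each $\overline{\bb C P^2}$ with a 4-ball removed retracts onto its exceptional $\bb C P^1$), and $U\cap V \simeq n S^3$.

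The inclusion $U\hookrightarrow M$ induces an isomorphism on $\pi_1$, so $L|_U$ is still non-trivial, whereas $V$ and $U\cap V$ are simply connected and $L$ restricts there to the trivial bundle. The twisted cellular cochain complex of $S^1 \vee n S^3$ reduces to $\bb R \xrightarrow{\rho(1)-1} \bb R \to 0 \to \bb R^n \to 0$, with the first map an isomorphism; this gives $H^*(U;L|_U)=(0,0,0,\bb R^n,0)$. Untwisted computations give $H^*(V;L|_V)=(\bb R^n,0,\bb R^n,0,0)$ and $H^*(U\cap V;L|_{U\cap V})=(\bb R^n,0,0,\bb R^n,0)$.

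The low-degree segment of the twisted Mayer--Vietoris sequence then reads
\begin{equation*}
0 \longrightarrow H^0(M;L) \longrightarrow 0 \oplus \bb R^n \stackrel{\iota}{\longrightarrow} \bb R^n \longrightarrow H^1(M;L) \longrightarrow 0,
\end{equation*}
where $\iota$ is the restriction $H^0(V)\to H^0(U\cap V)$ that matches each component of $V$ with its unique boundary 3-sphere and is therefore an isomorphism. This forces $H^1(M;L)=0$, and the lemma follows. The only delicate point is the Mayer--Vietoris bookkeeping that checks which restrictions of $L$ are non-trivial; once this is clarified, the computation on each piece is a routine CW-cohomology exercise.
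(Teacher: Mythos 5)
Your argument is correct, but it takes a genuinely different route from the paper. The paper's proof is analytic and leans on machinery already in place: it first invokes Lemma~\ref{blow-up} to reduce from $(S^1\times S^3)\sharp\,n\overline{{\bb C}P^2}$ to the case $n=0$, then realizes $S^1\times S^3$ as a diagonal Hopf surface carrying a Vaisman metric with parallel Lee form $\a_0$, and quotes the vanishing theorem of de L\'eon--L\'opez--Marrero--Padr\'on \cite[Thm 4.5]{leon} to get $H^k_{t\a_0}=0$ for $t\neq 0$; since $b_1=1$ this covers every non-trivial flat $L$. You instead compute everything topologically via a twisted Mayer--Vietoris sequence for the connected-sum decomposition, and your bookkeeping checks out: $U\simeq S^1\vee nS^3$ carries the non-trivial holonomy so $H^0(U;L)=H^1(U;L)=0$, the pieces $V$ and $U\cap V$ are simply connected, and the restriction $H^0(V;L)\to H^0(U\cap V;L)$ is indeed an isomorphism $\bb R^n\to\bb R^n$, forcing $H^1(M;L)=0$; duality with the (also non-trivial) bundle $L^*$ then kills $H^3$ and $H^4$, and \eqref{vanishing-0} kills $H^0$. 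What your approach buys is self-containedness and elementarity: it needs neither the blow-up lemma nor the citation to \cite{leon} nor any Vaisman geometry, and it handles all $n\ge 1$ in one stroke. What the paper's approach buys is brevity given the surrounding infrastructure, plus the fact that Lemma~\ref{blow-up} is needed elsewhere anyway. The one point you should make explicit is the identification of the analytically defined groups $H^k_{d_L}(S,L)$ with singular (or cellular) cohomology with local coefficients in the sheaf of parallel sections of $L$ --- the deRham--Weil isomorphism for local systems --- since your CW and Mayer--Vietoris computations live on the topological side; this is standard and is implicitly used in the paper as well (cf.\ \eqref{deRham-isom} and Remark~\ref{sheaves}), but it is the hinge on which your whole computation turns.
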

\begin{proof} As $H^k_{d_L}(S, L)$ depend only upon the smooth structure of $S$, we can assume without loss that $S$ is a complex surface obtained by a diagonal Hopf surface $S_0$ by blowing up $n$ points. By \eqref{vanishing-0} and Lemma~\ref{blow-up}, it is enough to consider the case $n=0$, i.e. $S\cong S^1 \times S^3$.  It is well-known (see e.g. \cite{Va}) that this smooth manifold admits a complex structure and a compatible Vaisman product metric with a parallel Lee form $\a_0$. Applying \cite[Thm 4.5]{leon}, we conclude that the cohomology $H^k_{t\a_0}(S, \bb R)$ vanishes for each $t\neq 0$. As $b_1(S)=1$, it follows that  there exists $t\neq 0$ such that $H_{d_L}^k(S,L)\cong H^k_{t\a_0}(S, \bb R)=\{ 0\}$. \end{proof}

We then have
\begin{lem} \label{c:third} Let $S$ be a compact complex surface whose minimal model is in the class {\rm VII}$_0^+$. If there exists a non-trivial flat real line bundle $L=L_\a, [\a]\neq 0 \in H^1_{dR}(S, \bb R),$  such that $H_{d_L}^1(S, L)\neq \{0\}$,  then $H^0(S,\O^1\ot \cal L)\neq \{0\}.$
\end{lem}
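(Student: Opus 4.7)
The plan is to reduce the statement to a direct application of the cohomological dichotomy supplied by Lemma~\ref{2isoms} to the flat holomorphic line bundle $\cal L_\a = L_\a \otimes \bb C$.

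First, I would verify that the hypotheses of Lemma~\ref{2isoms} are met, i.e.\ that $\cal L_\a$ is a non-trivial element of ${\rm Pic}^0(S)$. Since the minimal model of $S$ lies in ${\rm VII}_0^+$, blow-downs preserve $b_1$ so $b_1(S)=1$; Lemma~\ref{l:kodaira} then identifies $H^1_0(S, \bb C^*)$ with ${\rm Pic}^0(S)$. Moreover, the composition $H^1_{dR}(S, \bb R) \cong H^1(S, \bb R^*_{+}) \to H^1(S, \bb C^*)$ is injective, since the inclusion $\bb R^*_{+} \hookrightarrow \bb C^*$ is split by the modulus map $z \mapsto |z|$ (equivalently $\bb C^* \cong \bb R^*_{+} \times S^1$). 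Hence $[\a] \neq 0$ forces $\cal L_\a \neq \cal O$.

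Next, I would translate the hypothesis into complex coefficients. The isomorphism \eqref{deRham-isom} gives $H^1_{d_L}(S, L) \cong H^1_{-\a}(S, \bb R)$, so the assumption $H^1_{d_L}(S, L) \neq \{0\}$ produces a non-zero class in $H^1_{-\a}(S, \bb R)$. Because $\a$ is real, the Lichnerowicz--Novikov complex with complex coefficients is the complexification of the real one, and thus $H^1_{-\a}(S, \bb C) \cong H^1_{-\a}(S, \bb R) \otimes_{\bb R} \bb C \neq \{0\}$.

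Finally I would split into two cases depending on $H^0(S, \cal L_\a)$. If $H^0(S, \cal L_\a) = \{0\}$, Lemma~\ref{2isoms}(a) yields $H^0(S, \O^1 \otimes \cal L_\a) \cong H^1_{-\a}(S, \bb C) \neq \{0\}$. Otherwise, $H^0(S, \cal L_\a) \neq \{0\}$ and Lemma~\ref{2isoms}(b) gives the isomorphism $H^0(S, \cal L_\a) \stackrel{d}{\cong} H^0(S, \O^1 \otimes \cal L_\a)$, so again $H^0(S, \O^1 \otimes \cal L_\a) \neq \{0\}$. Either way, the desired conclusion follows.

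There is no serious obstacle, since all the substantive work (in particular the vanishing $H^2(S, \cal L_\a) = \{0\}$ from Lemma~\ref{l:vanishing}) has already been absorbed into Lemma~\ref{2isoms}. The only point deserving a line of care is the non-triviality of $\cal L_\a$, which as noted is automatic via Lemma~\ref{l:kodaira} and the injectivity of the comparison with $H^1(S, \bb C^*)$.
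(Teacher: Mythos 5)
Your argument is correct, and it deviates from the paper's proof in an interesting way. The paper does not split into the two cases of Lemma~\ref{2isoms}; instead it first \emph{rules out} the possibility $H^0(S,\cal L)\neq\{0\}$: by Corollary~\ref{c:enoki} that would force $S$ to be a blow-up of an Enoki surface, hence diffeomorphic to $(S^1\times S^3)\sharp\, n\overline{\bb CP^2}$, and then Lemma~\ref{leon} would give $H^1_{d_L}(S,L)=\{0\}$, contradicting the hypothesis. Having established $H^0(S,\cal L)=\{0\}$, the paper concludes exactly as you do in your first case, via Lemma~\ref{2isoms}(a) and the equalities $\dim_{\bb R}H^1_{d_L}(S,L)=\dim_{\bb R}H^1_{-\a}(S,\bb R)=\dim_{\bb C}H^1_{-\a}(S,\bb C)=\dim_{\bb C}H^0(S,\O^1\otimes\cal L)$. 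Your observation that the case $H^0(S,\cal L)\neq\{0\}$ needs no exclusion at all --- since Lemma~\ref{2isoms}(b) (indeed just the injectivity of $d_{-\a}$ on $H^0_{\bar\partial_{-\a}}$, which uses only $[\a]\neq 0$) already produces a non-zero element of $H^0(S,\O^1\otimes\cal L)$ --- makes the lemma purely cohomological and avoids the heavier inputs (Theorem~\ref{thm:enoki}, the diffeomorphism type of Enoki surfaces, Lemma~\ref{leon}). What the paper's route buys in exchange is the sharper conclusion that under the hypothesis one actually has $H^0(S,\cal L)=\{0\}$ together with the equality of dimensions, not merely non-vanishing; for the application to Theorem~\ref{thm:Z-vanishing} only non-vanishing is used, so your version suffices there as well. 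Your preliminary points (non-triviality of $\cal L_\a$ in ${\rm Pic}^0(S)$ via the splitting $\bb C^*\cong\bb R^*_+\times S^1$ and Lemma~\ref{l:kodaira}, and the identification $H^1_{d_L}(S,L)\cong H^1_{-\a}(S,\bb R)$ with its complexification) are correctly handled and consistent with the paper's sign conventions.
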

\begin{proof}  Let $\cal L = L_\a \otimes \bb C$ be the corresponding flat holomorphic line bundle.  We first show that $H^0(S, \cal L) =\{0\}$. Indeed, if  $H^0(S, \cal L) \neq \{0\}$, then by Corollary~\ref{c:enoki},  $S$ must be obtained by blowing up an Enoki surface, and thus $S$ must be diffeomorphic to $(S^1\times S^3)\sharp n {\overline {\bb C P^2}}, n \in \bb N^*$ (see e.g. \cite{Nakamura2}). This contradicts  $H_{d_L}^1(S, L)\neq \{0\}$ (according to Lemma~\ref{leon}).

Thus, $H^0(S, \cal L)=\{0\}$ and by Lemma~\ref{2isoms}  and \eqref{deRham-isom} we have ${\rm dim}_{\bb R} H^1_{d_L}(S, L) = {\rm dim}_{\bb R} H^1_{-\a}(S, \bb R)= {\rm dim}_{\bb C} H^1_{-\a}(S, \bb C)={\rm dim}_{\bb C}  H^0(S, \O^1\otimes \cal L)$. \end{proof}

We can now prove Theorem~\ref{thm:Z-vanishing} (which in turn generalizes Lemma~\ref{leon}): As noticed in \cite{FP}, by \eqref{poincare},  \eqref{vanishing-0},  and \eqref{euler}, it is enough to show  $H^1_{d_{L}}(S, L) =\{0\}$. By Lemma~\ref{blow-up}, we can assume that $S$ is minimal whereas by Theorem~\ref{thm:kodaira}, Lemma~\ref{l:hopf},  and the fact that the fundamental group of Inoue--Bombieri surfaces is not isomorphic to $\bb Z$, we can also assume that $S$ is in the class ${\rm VII}^+_0$.

If  $H^1_{d_L}(S, L) \neq \{0\}$, by Lemma~\ref{c:third}  we will  have $H^0(S, \O^1\otimes \cal L) \neq \{0\}$ whereas Theorem~\ref{thm:Enoki-characterization} implies that $S$ must be an Enoki surface, and therefore $S$ must be diffeomorphic to $(S^1\times S^3) \sharp\,n\overline{{\bb C} P^2}$ (see e.g.~\cite{Nakamura2}). According to Lemma~\ref{leon},  this contradicts the assumption $H^1_{d_{L}}(S, L) \neq \{0\}$.  \hfill $\Box$

\end{document}